\newtheorem {theorem}{Theorem}
\newtheorem {corollary}[theorem]{Corollary}
\newtheorem {definition}[theorem]{Definition}
\newtheorem {lemma}[theorem]{Lemma}
\newtheorem {proposition}[theorem]{Proposition}
\newenvironment {proof}[1][Proof]{\noindent \textbf {#1.} }{\ \rule {0.5em}{0.5em}}
\begin{document}

\title{Cohesive Powers of Structures\protect\footnote{Acknowledgements\par
The authors gratefully acknowledge support of FRG NSF grant DMS-2152095.
}\textsuperscript {}}

\author{Valentina Harizanov \\Department
of Mathematics\\The George Washington University\\Washington,
DC 20052, USA\\harizanv@gwu.edu
\and Keshav Srinivasan\\Department
of Mathematics\\The George Washington University\\Washington,
DC 20052, USA\\ksrinivasan@gwu.edu
\and 
}
\maketitle

\begin{abstract}A cohesive power of a structure is an effective analog of the classical ultrapower of a structure. We start with a computable structure, and consider its countable ultrapower over a cohesive set of natural numbers. A cohesive set is an infinite set of natural numbers that is indecomposable with respect to computably enumerable sets. It plays the role of an ultrafilter, and the elements of a cohesive power are the equivalence classes of certain partial computable functions. Thus, unlike many classical ultrapowers, a cohesive power is a countable structure. In this paper we focus on cohesive powers of graphs, equivalence structures, and computable structures with a single unary function satisfying various properties, which can also be viewed as directed graphs. For these computable structures, we investigate the isomorphism types of their cohesive powers, as well as the properties of cohesive powers when they are not isomorphic to the original structure.

\bigskip

2010 Mathematics Subject Classification. Primary 03C57, Secondary 03D45.

\bigskip

Key words and phrases:  cohesive power, computable structure, graph, equivalence structure, partial injection structure, two-to-one structure.   
\end{abstract}

\section{Introduction and Preliminaries
}

We consider a computability-theoretic product construction for structures. We start with a uniformly computable sequence of structures for the same computable language, and in their Cartesian product consider partial computable sequences modulo a fixed cohesive set of natural  numbers. A cohesive set is an infinite set of natural numbers, which is indecomposable with respect to computably enumerable sets. In this paper we focus on effective products that are powers of a single computable structure. Some cohesive sets are the complements of maximal sets. Co-maximal powers arose naturally in the study of the automorphisms of the lattice of computably enumerable vector spaces. In particular, Dimitrov \cite{D4} used cohesive powers of fields to characterize
principal filters of quasimaximal vector spaces.  He later introduced in \cite{D3} the notion of a cohesive power of a computable structure in general. A cohesive power construction produces a countable structure because the elements are represented by partial computable functions. In some cases partial computable functions can be replaced by computable functions. 

The motivation for cohesive powers dates back to Skolem's construction of a countable non-standard model of arithmetic (see  \cite{DH2}) where, instead of building a structure from all functions on natural numbers, he used only arithmetical functions.  Skolem's idea was further developed in the study of models of fragments of arithmetic by Feferman, Scott and Tennenbaum \cite{FST}, Lerman \cite{L} Hirschfeld and Wheeler \cite{Hi,HW} and McLaughlin \cite{ML1,ML2,ML3}. In \cite{Ne}, Nelson investigated recursive saturation of effective ultraproducts.

Cohesive power construction allows us to obtain countable models with interesting properties. A cohesive power of a structure $\mathcal{A}$ may not be elementarily equivalent to $\mathcal{A}$. Dimitrov established a restricted version of \L o\'{s}'s theorem for cohesive powers. However, additional decidability on the structure plays a significant role in increasing satisfiability of the same sentences in the ultrapower.

Recall the following notions from computability theory. The complement of a set $X \subseteq \omega $ is denoted by $\overline{X}$. We write $ \subseteq ^{ \ast }$ for inclusion up to finitely many elements. By c.e.\ we abbreviate computably enumerable. An infinite set $C \subseteq \omega $ is \emph{cohesive} if for every c.e.\ set $W\text{,}$ either $W \cap C$ or $\overline{W} \cap C$ is finite. If $W \cap C$ is infinite, then $C \subseteq ^{ \ast }W ,$ and if  $\overline{W} \cap C$ is infinite, then $C \subseteq ^{ \ast }\overline{W}$. Clearly, an infinite subset of a cohesive set is cohesive. It follows that if a cohesive set $C$ is contained in the union of finitely many c.e.\ sets, up to finitely many elements, then it is contained in one of them, up to finitely many elements. That is, because $C$ must have an infinite intersection with at least one of the finitely many c.e. sets in the union. It can be shown that every infinite set of natural numbers has a cohesive subset. Hence there are continuum many cohesive sets. Some cohesive sets are complements of c.e. sets. A set $E \subseteq \omega $ is \emph{maximal} iff $E$ is c.e.\ and $\overline{E}$ is cohesive.

  If $L$ is the language of a structure $\mathcal{A}$ with domain $A$, then $L_{A}$ is the language $L$ expanded by adding a constant symbol for every $a \in A$, and $\mathcal{A}_{A} =(\mathcal{A} ,a)_{a \in A}$ is the corresponding expansion of $\mathcal{A}$ to $L_{A}$. The \emph{atomic}\emph{ diagram}
of $\mathcal{A}$ is the set of all atomic and negations of  atomic sentences of $L_{A}$ true in $\mathcal{A}_{A}$.  A countable structure for\  a computable language $L$ is \emph{computable} if its domain is computable and its atomic diagram is computable or, equivalently, its functions and relations are uniformly computable. The \emph{elementar}\emph{y diagram} (or complete diagram) of $\mathcal{A}$, denoted by $D^{c} (\mathcal{A})$, is the set of all first-order sentences of $L_{A}$ that are true in $\mathcal{A}_{A}$. A $\Sigma _{n}^{0}$\emph{ diagram} of $\mathcal{A}$ is the set of all $\Sigma _{n}^{0}$ sentences in $D^{c} (\mathcal{A})$. A structure is \emph{decidable} if its doman is computable and its elementary diagram is computable. A structure is $n$-\emph{decidable} if its doman is computable and its $\Sigma _{n}^{0}$ diagram is computable. In particular, computable structures are the same as $0$-decidable structures. 

We will now give a definition of a cohesive product of computable structures which appears in \cite{DH0}. By $ \simeq $ to denote the equality of partial functions.

\begin{definition}
Let $L$ be a computable language. Let $(\mathcal{A}_{i})_{i \in \omega }$ be a uniformly computable sequence of computable structures in $L\text{,}$ with uniformly computable sequence of domains $(A_{i})$\textsubscript {$i \in \omega $}. Let $C \subseteq \omega $ be a cohesive set. The \emph{cohesive product }$\mathcal{B}$\emph{\ of }$\mathcal{A}_{i}$\emph{\ over }$C$, in symbols $\mathcal{B} =\prod _{C}\mathcal{A}_{i}$, is a structure defined as follows.

\begin{enumerate}
\item Let 

\begin{equation*}D =\{\psi  \mid \psi  :\omega  \rightarrow \bigcup _{i \in \omega }A_{i}~\mathrm{i} \mathrm{s}~\mathrm{a}~\mathrm{p} \mathrm{a} \mathrm{r} \mathrm{t} \mathrm{i} \mathrm{a} \mathrm{l}~\mathrm{c} \mathrm{o} \mathrm{m} \mathrm{p} \mathrm{u} \mathrm{t} \mathrm{a} \mathrm{b} \mathrm{l} \mathrm{e}~\mathrm{f} \mathrm{u} \mathrm{n} \mathrm{c} \mathrm{t} \mathrm{i} \mathrm{o} \mathrm{n}\text{ } \wedge \ C \subseteq ^{ \ast }dom(\psi )\} .
\end{equation*} For $\psi _{1} ,\psi _{2} \in D$, let
\begin{equation*}\psi _{1}  =_{C}\psi _{2}\text{\quad }\text{\textrm{iff}}\text{\quad }C  \subseteq ^{ \ast }\{i :\psi _{1} (i)\downarrow  =\psi _{2} (i)\downarrow \}\text{.}
\end{equation*}The domain of $\prod _{C}\mathcal{A}_{i}$ is the quotient set $D/_{ =_{C}}$ and is denoted here by $B\text{.}$ \smallskip 

\item If $f \in L$ is an $n$-ary function symbol, then $f^{\mathcal{B}}$ is an $n$-ary function on $B$ such that for every $[\psi _{1}] ,\ldots  ,[\psi _{n}] \in B$, we have
\begin{equation*}f^{\mathcal{B}}([\psi _{1}] ,\ldots  ,[\psi _{n}]) =[\psi ]\text{\quad }\text{\textrm{iff}}\text{\quad }\left ( \forall i \in \omega \right )\left [\psi  (i) \simeq f^{\mathcal{A}_{i}} (\psi _{1} (i) ,\ldots  ,\psi _{n} (i))\right ]\text{.}
\end{equation*} 

\item If $R \in L$ is an $m$-ary relation symbol, then $R^{\mathcal{B}}$ is an $m$-ary relation on $B$ such that for every $[\psi _{1}] ,\ldots  ,[\psi _{m}] \in B$,
\begin{equation*}R^{\mathcal{B}}([\psi _{1}] ,\ldots  ,[\psi _{m}])\text{\quad }\text{\textrm{iff}}\text{\quad }C  \subseteq ^{ \ast }\{i \in \omega  \mid R^{\mathcal{A}_{i}} (\psi _{1} (i) ,\ldots  ,\psi _{m} (i))\}\text{.}
\end{equation*} 

\item If
$c \in L$ is a constant symbol, then $c^{\mathcal{B}}$ is the equivalence class (with respect to $ =_{C}$) of the computable function $g :\omega  \rightarrow \bigcup _{i \in \omega }A_{i}$ such that $g (i) =c^{\mathcal{A}_{i}}\text{,}$ for each $i \in \omega $. \medskip \end{enumerate}
\end{definition}

If $C$ is co-c.e., then for every $[\psi ] \in \prod _{C}\mathcal{A}_{i}$ there is a computable function $f$ such that $\left [f\right ] =\left [\psi \right ]$. That is, for $(a_{i})_{i} \in \Pi _{i}A_{i}$ which is a fixed computable sequence, define \bigskip  

$f (i) =\left \{\begin{array}{cc}\psi  (i) & \text{if }\psi  (i)\downarrow \text{first,} \\
a_{i} & \text{if }i\text{  is enumerated into }\overline{C}\text{  first.}\end{array}\right .$

\bigskip

If $\mathcal{A}_{i} =\mathcal{A}$ for $i \in \omega $, then $\prod _{C}\mathcal{A}_{i}$ is called the \emph{cohesive power of }$\mathcal{A}$\emph{ over }$C$ and is denoted by $\prod _{C}\mathcal{A} .$ 

An \emph{embedding} of a structure $\mathcal{A}$ into a structure $\mathcal{B}$ is an isomorphism between $\mathcal{A}$ and a substructure of $\mathcal{B}$. A structure $\mathcal{A}$ embeds into its cohesive power $\mathcal{B} =\prod _{C}\mathcal{A}$. For $a \in A$ let $[c_{a}] \in B$ be the equivalence class of the total function $c_{a}$ such that $c_{a} (i) =a$ for every $i \in \omega $. The function $d :A \rightarrow B$ such that $d (a) =[c_{a}]$ is called the \emph{canonical embedding} of $\mathcal{A}$ into $\mathcal{B}$.

In \cite{DH0,LinOrd} we provide variants of \L o\'{s}'s theorem for cohesive products of uniformly computable and more generally uniformly $n$-decidable structures. For example, every $\Sigma _{n +3}^{0}$ sentence true in an $n$-decidable structure is also true in its cohesive powers. In particular, we have the following theorem for cohesive powers of computable structures.    \medskip

\begin{theorem}
(Dimitrov \cite{D3}) \label{rumen}Let $\mathcal{B} =\prod _{C}\mathcal{A}$ be a cohesive power of a computable structure $\mathcal{A} .$ Let $C$ be a cohesive set. 
\begin{enumerate}
\item If $\sigma $ is a $\Pi _{2}^{0}$ (or $\Sigma _{2}^{0})$ sentence in $L$, then $\mathcal{B} \models \sigma $ iff $\mathcal{A} \models \sigma  .$ 

\item
If $\sigma $ is a $\Pi _{3}^{0}$ sentence in $L$, then $\mathcal{B} \models \sigma $ implies  $\mathcal{A} \models \sigma  .$

By contrapositive, if $\sigma $ is a $\Sigma _{3}^{0}$ sentence in $L$, then $\mathcal{A} \models \sigma $ implies $\mathcal{B} \models \sigma $

\end{enumerate}
\end{theorem}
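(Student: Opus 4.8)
The plan is to derive both parts from a single \emph{restricted \L o\'s theorem} proved by induction on quantifier complexity, using cohesiveness of $C$ at each quantifier alternation. The central object is the \emph{\L o\'s condition} for a formula $\varphi(\bar x)$ and a tuple $[\bar\psi]=([\psi_1],\dots,[\psi_k])$ from $B$, namely
\[
\mathcal{B}\models\varphi([\bar\psi])\quad\Longleftrightarrow\quad C\subseteq^{\ast}\{i:\mathcal{A}\models\varphi(\bar\psi(i))\}.
\]
Specialized to a sentence $\sigma$ the right-hand set is either $\omega$ or $\emptyset$, so the \L o\'s condition for $\sigma$ reads exactly $\mathcal{B}\models\sigma\iff\mathcal{A}\models\sigma$; thus it suffices to determine for how large a complexity class the \L o\'s condition, or its appropriate one-sided version, holds.

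First I would establish the full biconditional for all $\Sigma^0_1$ and $\Pi^0_1$ formulas. The atomic case is immediate from clauses (2)--(4) of the definition, after noting that terms evaluate to partial computable functions whose domains contain $C$ up to a finite set. For quantifier-free $\varphi$ the set $\{i:\mathcal{A}\models\varphi(\bar\psi(i))\}$ is computable on $\mathrm{dom}(\bar\psi)$, so I push the biconditional through $\wedge,\vee,\neg$: conjunction is routine, while disjunction and negation use cohesiveness (for a computable, hence c.e., set $S$ either $C\subseteq^{\ast}S$ or $C\subseteq^{\ast}\overline{S}$). For a $\Sigma^0_1$ formula $\exists\bar y\,\theta(\bar x,\bar y)$ the forward direction is immediate, and for the converse, whenever $C\subseteq^{\ast}\{i:\mathcal{A}\models\exists\bar y\,\theta(\bar\psi(i),\bar y)\}$ I effectively search, for each such $i$, for a witness tuple and thereby define partial computable $\bar\eta$ with $C\subseteq^{\ast}\mathrm{dom}(\bar\eta)$; the quantifier-free biconditional applied to $\theta$ then gives $\mathcal{B}\models\exists\bar y\,\theta([\bar\psi],\bar y)$. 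The $\Pi^0_1$ case follows by negation, since $\{i:\mathcal{A}\models\exists\bar y\,\neg\theta(\bar\psi(i),\bar y)\}$ is c.e.\ and cohesiveness again splits $C$ into it or its complement.

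Next I would prove Part~1 by buying one extra alternation through two devices that avoid undecidable search. To show $\mathcal{A}\models\forall\bar y\,\exists\bar z\,\theta\Rightarrow\mathcal{B}\models\forall\bar y\,\exists\bar z\,\theta$, I take an arbitrary $[\bar\eta]\in B$ and reduce the inner $\Sigma^0_1$ formula $\exists\bar z\,\theta$ to its \L o\'s condition; since $\mathcal{A}$ satisfies the universal statement, \emph{every} value $\bar\eta(i)$ satisfies $\mathcal{A}\models\exists\bar z\,\theta(\bar\eta(i),\bar z)$, so the \L o\'s set contains $\mathrm{dom}(\bar\eta)\supseteq^{\ast}C$ (the ``universal coverage'' move). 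For $\mathcal{B}\models\Pi^0_2\Rightarrow\mathcal{A}\models\Pi^0_2$ I argue contrapositively: if $\mathcal{A}\models\exists\bar y\,\forall\bar z\,\neg\theta$ with witness $\bar a\in A$, then the \emph{constant} functions $c_{\bar a}$ are total computable, the $\Pi^0_1$ biconditional applied to $[c_{\bar a}]=d(\bar a)$ gives $\mathcal{B}\models\forall\bar z\,\neg\theta([c_{\bar a}],\bar z)$, hence $\mathcal{B}\not\models\sigma$ (the ``constant-witness'' move). Dualizing yields the $\Sigma^0_2$ case. Part~2 then combines the same two devices: given $\mathcal{A}\models\exists\bar y\,\tau(\bar y)$ with $\tau=\forall\bar z\,\exists\bar w\,\theta$, I pick an $\mathcal{A}$-witness $\bar a$, pass to $[c_{\bar a}]$, and observe that $\mathcal{A}\models\tau(\bar a)\Rightarrow\mathcal{B}\models\tau([c_{\bar a}])$ is exactly the $\Pi^0_2$ preservation of Part~1 carried out with the fixed parameters $\bar a$ (equivalently, in the cohesive power of the computable expansion obtained by adjoining constants for $\bar a$). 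I expect the main obstacle to be precisely the backward existential step, which pins the complexity: for $\Sigma^0_1$ the search over a quantifier-free matrix is effective, but one more existential over a $\Pi^0_1$ matrix would require deciding a $\Pi^0_1$ condition to locate witnesses. The extra alternation in Parts~1 and~2 is therefore bought solely by replacing search with constant witnesses from $A$, and this is inherently asymmetric: constant witnesses transport $\mathcal{A}$-truth upward into $\mathcal{B}$ but give no way to extract a single $\mathcal{A}$-witness from a varying $\mathcal{B}$-witness, which is the structural reason Part~2 holds in only one direction.
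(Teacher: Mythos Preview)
The paper does not actually prove this theorem: it is stated with attribution ``(Dimitrov \cite{D3})'' and no proof is given in the present paper, so there is nothing to compare your argument against here. That said, your proposal is correct and is the standard route to this restricted \L o\'s theorem. The inductive scheme you outline---full \L o\'s biconditional for $\Sigma^0_1$ and $\Pi^0_1$ formulas with parameters, then one additional quantifier handled by the ``universal coverage'' and ``constant witness'' moves---is exactly how the result is established in the literature, and your identification of the effective witness search at the $\Sigma^0_1$ step as the place where computability of $\mathcal{A}$ is spent is the right diagnosis of why the transfer stops at $\Sigma^0_3$ in one direction only.

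One small point worth tightening when you write it out: in the quantifier-free step you invoke cohesiveness for disjunction and negation by saying the relevant set is ``computable on $\mathrm{dom}(\bar\psi)$.'' What you actually need (and have) is that the set is the intersection with $\mathrm{dom}(\bar\psi)$ of a c.e.\ set, which suffices since $C\subseteq^{\ast}\mathrm{dom}(\bar\psi)$; phrasing it that way makes the appeal to cohesiveness clean and avoids any suggestion that you are deciding membership in $\mathrm{dom}(\bar\psi)$.
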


The converse of part (2) in the previous theorem does not hold. The first such counterexample was produced by Feferman, Scott and Tennenbaum in their result in \cite{FST} that no cohesive power of the standard model of arithmetic is a model of Peano arithmetic. There is a $\Pi _{3}^{0}$ sentence involving Kleene's $T$ predicate that is true in the standard model of arithmetic $\mathcal{N}$ but is false in every cohesive power of $\mathcal{N}$ (see \cite{L}). More recently, in \cite{LinOrd}, we produced natural examples of such sentences concerning linear orders.   

In \cite{D3}, Dimitrov established that if $\mathcal{A}$ is a decidable structure, then $\mathcal{A}$ and $\prod _{C}\mathcal{A}$ satisfy the same first-order sentences, i.e., they are elementarily equivalent. A corresponding result has been formulated for $n$-decidable structures in \cite{LinOrd1}.

An equivalence structure $\mathcal{A} =(A ,E^{\mathcal{A}})$ consists of a set $A$ with a binary relation $E^{\mathcal{A}}$ that is reflexive, symmetric, and transitive. An equivalence structure $\mathcal{A}$ is \emph{computable} if $A$ is a computable set and $E^{\mathcal{A}}$ is a computable relation. An application of Theorem \ref{rumen} is that if $\mathcal{A}$ is a computable equivalence structure, then so is $\prod _{C}\mathcal{A}$. That is because the theory of equivalence structures is $\Pi _{1}^{0}$-axiomatizable. Similarly, a cohesive power of a computable field is a field. In \cite{DHMM}, we investigated cohesive powers of the field $\mathbb{Q}$ of rational numbers over co-maximal sets. For example, we proved that two cohesive powers of $\mathbb{Q}$ over co-maximal sets are isomorphic iff the maximal sets have the same $m$-degree.

Dimitrov \cite{D3} showed that if $\mathcal{A}$ is a finite structure, then $\prod _{C}\mathcal{A} \cong $ $\mathcal{A}$. Also, if $\mathcal{A}$ and $\mathcal{B}$ are computably isomorphic structures and $C$ is a cohesive set, then $\Pi _{C}\mathcal{A} \cong \Pi _{C}\mathcal{B}$ (for a proof see \cite{LinOrd}). A computable structure $\mathcal{A}$ is called \emph{computably categorical} if every computable isomorphic structure is computably isomorphic to $\mathcal{A}$.

A computable (infinitary) language is more expressive than the usual finitary first-order language. For a computable ordinal $\alpha $, Ash defined computable $\Sigma _{\alpha }$ and $\Pi _{\alpha }$ formulas of $L_{\omega _{1} \omega }$ recursively and simultaneously and together with their G{\"o}del numbers. For the natural numbers we roughly have the following classification of formulas. Computable $\Sigma _{0}$ and $\Pi _{0}$ formulas are just the finitary quantifier-free formulas. For $n >1$, a \emph{computable }$\Pi _{n}$\emph{ formula} is a c.e.\ conjunction of formulas $ \forall \overline{u}\, \phi  (\overline{x} ,\overline{u})$, where $\phi $ is a computable $\Sigma _{m}$ formula for some $m <n$. Dually, a \ \emph{computable }$\Sigma _{n}$ \emph{formula} is a c.e.\ disjunction of formulas $ \exists \overline{v}\, \theta  (\overline{y} ,\overline{v})\text{}$, where $\theta $ is a computable $\Pi _{m}$ formula for some $m <n$. (See \cite{FHM}.) For more on computability theory see \cite{RS}. By $ \langle k ,n \rangle $ we denote a computable bijection from $\omega ^{2}$ onto $\omega  ,$ which is strictly increasing with respect to each coordinate and such that $k ,n_{} \leq  \langle k ,n \rangle $. 

\smallskip \medskip

This paper is a greatly expanded version of the preliminary work in \cite{HS} to appear in the proceedings  following the Fall Western Sectional Meeting of the AMS during October 23--24, 2021. There, we studied cohesive powers of certain graphs and equivalence structures. For example, we showed that every computable graph can be embedded into a cohesive power of a strongly locally finite graph. Here, we also investigate cohesive powers of computable structures with a single unary function satisfying various properties, called injection structures, two-to-one structures, and (2,0)-to-one structures. We further study cohesive powers of partial injection structures viewed as relational structures. We characterize the isomorphism types of cohesive powers of these computable structures, and use computable (infinitary) language to describe the properties of cohesive powers when they are not isomorphic to the original structure.

\section{Cohesive powers of graphs
}
A \emph{graph} (or undirected graph) $(V ,E)$ is a nonempty set $V$ of vertices with a symmetric binary relation $E$ (also called the edge relation), so it can be axiomatized by the following universal sentence:\medskip

$ \forall x \forall y[E(x ,y) \Rightarrow E(y ,x)]$. \medskip

\noindent Hence, by Theorem \ref{rumen} a cohesive power of a graph is a graph.

 If $(x ,y) \in E ,$ then vertices $x$ and $y$ are adjacent to each other. The \emph{degree of a vertex} is the number of vertices it is adjacent to. A graph $G$ is called \emph{locally finite} if the degree of each vertex in $G$ is finite. A graph $G$ is \emph{strongly locally finite }if all connected components of $G$ are finite. In \cite{CKL}, a criterion was obtained for computable categoricity of certain strongly locally finite computable graphs. 

The \emph{disjoint union} of graphs $(V_{1} ,E_{1})$ and $(V_{2} ,E_{2})$ where $V_{1} \cap V_{2} = \varnothing $ is a graph $(V_{1} \cup V_{2} ,E_{1} \cup E_{2})$. Hence there are no edges between $V_{1}$ and $V_{2}$. We write $(V_{1} ,E_{1}) \amalg (V_{2} ,E_{2})$ and also view it as  decomposition into a disjoint union. If $V_{1} \cap V_{2} = \varnothing  ,$ then a union of graphs $G_{1} =(V_{1} ,E_{1})$ and $G_{2} =(V_{2} ,E_{2})$ is any graph $G =(V_{1} \cup V_{2} ,E)$ such that $E_{1} \cup E_{2} \subseteq E$ and $E \upharpoonright (V_{i} \times V_{i}) =E_{i}$ for $i =1 ,2$. We simply write $G =G_{1} \cup G_{2} .$

The following result demonstrates the universal feature with respect to embeddability into cohesive powers of certain computable graphs.

\begin{theorem}
Let $G$ be a computable graph. Let $C$ be a cohesive set. Then there is a computable, strongly locally finite graph $\mathcal{A}$ such that $\Pi _{C}\mathcal{A}$ is isomorphic to the union $G \cup H$ for some graph $H$ or $H = \varnothing $.
\end{theorem}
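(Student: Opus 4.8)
My plan is to realize $G$ as an induced subgraph of a cohesive power of a carefully layered graph and to let $H$ absorb everything else that the cohesive power produces. Write $G=(V,E)$ with $V\subseteq\omega$ computable and $E$ a computable symmetric relation. For each $n\in\omega$ let $B_n$ be the finite subgraph of $G$ induced on $V\cap\{0,1,\ldots,n\}$, and build $\mathcal{A}$ as the disjoint union $\coprod_n B_n$, realized concretely on the vertex set $\{\langle v,n\rangle : v\in V,\ v\le n\}$, with $\langle v,n\rangle$ and $\langle w,m\rangle$ adjacent iff $n=m$ and $E(v,w)$. Since $V$, $E$, and the pairing $\langle\cdot,\cdot\rangle$ are computable, decoding shows $\mathcal{A}$ is a computable graph. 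Moreover every edge of $\mathcal{A}$ joins two vertices with the same second coordinate $n$, so each connected component is contained in the finite level $\{\langle v,n\rangle : v\le n\}$; hence $\mathcal{A}$ is strongly locally finite, as required.

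Next I would embed $G$. The key point is that the canonical (constant-function) embedding is unavailable here, because the vertices of $\mathcal{A}$ are tagged pairs rather than elements of $V$; instead I use the level-tracking functions $\psi_v$ defined by $\psi_v(n)=\langle v,n\rangle$ for $n\ge v$ and $\psi_v(n)\uparrow$ otherwise. Each $\psi_v$ is partial computable with cofinite domain, so $C\subseteq^{*}\mathrm{dom}(\psi_v)$ and $[\psi_v]\in\prod_C\mathcal{A}$. I claim $\iota:v\mapsto[\psi_v]$ embeds $G$ onto an induced subgraph of $\prod_C\mathcal{A}$. For distinct $v,w$ we have $\psi_v(n)\ne\psi_w(n)$ for all $n\ge\max(v,w)$, so $\{n:\psi_v(n)=\psi_w(n)\}$ is finite and, as $C$ is infinite, $[\psi_v]\ne[\psi_w]$. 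For adjacency, whenever $n\ge\max(v,w)$ both values are defined and $E^{\mathcal{A}}(\psi_v(n),\psi_w(n))$ holds iff $E(v,w)$; hence $\{n:E^{\mathcal{A}}(\psi_v(n),\psi_w(n))\}$ is cofinite when $E(v,w)$ and finite otherwise. By the clause defining relations in $\prod_C\mathcal{A}$ together with the infinitude of $C$, this yields adjacency of $[\psi_v]$ and $[\psi_w]$ in $\prod_C\mathcal{A}$ iff $E(v,w)$, so $\iota$ preserves and reflects the edge relation exactly.

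Finally I would package the embedding as a graph union. Let $B$ be the domain of $\prod_C\mathcal{A}$, put $V_1=\iota(V)$ and $V_2=B\setminus V_1$, and let $H$ be the subgraph of $\prod_C\mathcal{A}$ induced on $V_2$, so that $H=\varnothing$ exactly when $\iota$ is onto. By the previous paragraph the subgraph induced on $V_1$ is isomorphic to $G$ via $\iota$; transporting this isomorphism relabels $\prod_C\mathcal{A}$ as a graph on $V\cup V_2$ whose restriction to $V\times V$ is $E$ and whose restriction to $V_2\times V_2$ is the edge set of $H$. Since the full edge relation restricts on each part to the corresponding induced edge set, this is precisely a union $G\cup H$ in the sense defined above, giving $\prod_C\mathcal{A}\cong G\cup H$.

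I expect the only real subtlety to be recognizing why the statement is phrased with a union rather than an isomorphism to $G$: the cohesive power invariably contains classes $[\psi]$ not of the form $[\psi_v]$ (for instance classes of functions ranging over infinitely many distinct levels), and such uncontrolled elements, together with any edges they may have to $\iota(V)$, are exactly what the loose graph-union convention routes into $H$. Consequently the one computation demanding care is the adjacency biconditional above, where one passes between ``cofinite/finite'' and ``$C\subseteq^{*}$''; notably this uses only the infinitude of $C$, with the remainder of the argument being bookkeeping about the layered construction of $\mathcal{A}$.
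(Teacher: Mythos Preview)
Your proof is correct and takes essentially the same approach as the paper: both build $\mathcal{A}$ as a disjoint union of growing finite induced subgraphs of $G$ and embed $G$ into $\Pi_C\mathcal{A}$ via diagonal level-tracking functions $\psi_v$. The only differences are cosmetic---the paper indexes the levels with triangular-number segments rather than your pairing function, and it disposes of the finite case separately by taking $\mathcal{A}=G$ and $H=\varnothing$.
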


\begin{proof}
If $G$ is finite, then $G$ is strongly locally finite and $G \cong \Pi _{C}G$. 

Let $\mathbb{N}^{ +} =\omega  -\{0\} .$ Now, assume that a graph $G =(V ,R_{})$ is infinite and fix a computable enumeration $f$ of its vertices  $V =\{f(n) :$ $n \in \mathbb{N}^{ +}\} .$ Using this enumeration we will build a computable graph $\mathcal{A}$ with domain $\mathbb{N}^{ +}$\textsuperscript and edge set $E$. Consider vertices $a ,b \in \mathbb{N}^{ +} .$ If $a ,b$ can be written as $a =\frac{k(k +1)}{2} +m$ and $b =\frac{k(k +1)}{2} +n$ for some $k ,m ,n$ such that $1 \leq m ,n \leq k +1 ,$ then let \medskip

 $(a ,b) \in E \Leftrightarrow (f(m) ,f(n)) \in R .$ \medskip  

\noindent If there are no $k ,m ,n$ as above, then let $(a ,b) \notin E$. Note that $\frac{k(k +1)}{2} =1 +2 + \cdots  +k$, so the idea is to divide the natural numbers into segments of length $1 ,2 ,\ldots $. Clearly, $\mathcal{A}$ is a computable, strongly locally finite graph.

For $n \in N^{ +}$ we define functions $\psi _{n}$ by $\psi _{n}(x) =\frac{x(x +1)}{2} +n$. Hence each $[\psi _{n}]$ is an element of the cohesive power $\Pi _{C}\mathcal{A} .$ Consider the subgraph $\mathcal{S}$ of $\mathcal{A}$ with the vertex set $\{[\psi _{n}] :n \in \mathbb{N}^{ +}\}$. Consider a function $\rho  :\{[\psi _{n}] :n \in \mathbb{N}^{ +}\} \rightarrow V$ defined by $\rho ([\psi _{n}]) =f(n)$. Then we can show that $\rho $ is a graph isomorphism (see \cite{HS}), so $\mathcal{S}$ is isomorphic to $G .$ Hence $\Pi _{C}\mathcal{A}$ is isomorphic to the union $G \cup H$ for some graph $H$.  
\end{proof}

\bigskip If a computable graph $G$ is locally finite, we have a stronger result.

\begin{theorem}
Let $G$ be an infinite computable graph that is locally finite. Let $C$ be a cohesive set. Then there is a computable, strongly locally finite graph $\mathcal{A}$ such that $\Pi _{C}\mathcal{A}$ is isomorphic to the disjoint union $G \sqcup H$ for some graph $H$.
\end{theorem}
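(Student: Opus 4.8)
The plan is to reuse verbatim the computable, strongly locally finite graph $\mathcal{A}$ built in the proof of the previous theorem, and to upgrade the conclusion from an arbitrary union to a \emph{disjoint} union by exploiting local finiteness of $G$. Recall that $\mathcal{A}$ has domain $\mathbb{N}^{+}$ partitioned into consecutive segments, where the $k$-th segment consists of the vertices $\frac{k(k+1)}{2}+1 ,\ldots , \frac{k(k+1)}{2}+(k+1)$ and carries a copy of the induced subgraph $G \upharpoonright \{f(1) ,\ldots , f(k+1)\}$, with no edges running between distinct segments. Hence every connected component of $\mathcal{A}$ lies inside a single finite segment, so $\mathcal{A}$ is computable and strongly locally finite. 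With $\psi_{n}(x)=\frac{x(x+1)}{2}+n$ as before, the subgraph $\mathcal{S}$ of $\Pi_{C}\mathcal{A}$ on $\{[\psi_{n}] : n \in \mathbb{N}^{+}\}$ is isomorphic to $G$ via $\rho([\psi_{n}])=f(n)$.

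The one genuinely new point to establish is that $\mathcal{S}$ is a \emph{union of connected components} of $\Pi_{C}\mathcal{A}$; equivalently, that no edge of $\Pi_{C}\mathcal{A}$ joins a vertex of $\mathcal{S}$ to a vertex outside $\mathcal{S}$. Granting this, $\Pi_{C}\mathcal{A}$ decomposes as $\mathcal{S} \sqcup H$, where $H$ is the subgraph induced on the remaining vertices, and since $\mathcal{S} \cong G$ we obtain $\Pi_{C}\mathcal{A} \cong G \sqcup H$, as desired.

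To prove the no-escaping-edges claim, fix $n$ and suppose $[\phi]$ is adjacent to $[\psi_{n}]$ in $\Pi_{C}\mathcal{A}$, so that $C \subseteq^{*} \{k : (\phi(k), \psi_{n}(k)) \in E\}$. Since edges of $\mathcal{A}$ occur only within a single segment, the condition $(\phi(k), \psi_{n}(k)) \in E$ forces, for all large $k$, that $\phi(k)$ lies in the $k$-th segment, say at position $m_{k}$, with $(f(m_{k}), f(n)) \in R$. Here local finiteness enters: because $f(n)$ has finite degree in $G$, the set $J = \{j : (f(j), f(n)) \in R\}$ of neighbor-indices is finite, say $J = \{j_{1} ,\ldots , j_{d}\}$. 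Consequently, up to finitely many $k$,
\begin{equation*}
\{k : (\phi(k), \psi_{n}(k)) \in E\} = \bigcup_{i=1}^{d} \{k : \phi(k)\!\downarrow \; = \psi_{j_{i}}(k)\} .
\end{equation*}
Each set on the right is c.e., so $C$ is contained, up to finitely many elements, in a union of finitely many c.e.\ sets. By cohesiveness of $C$, as noted in the preliminaries, $C \subseteq^{*} \{k : \phi(k) = \psi_{j_{i}}(k)\}$ for some $i$, which means $[\phi] = [\psi_{j_{i}}] \in \mathcal{S}$.

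Thus every neighbor of a vertex of $\mathcal{S}$ again lies in $\mathcal{S}$, so no edge leaves $\mathcal{S}$, completing the argument. The main obstacle is precisely this last step: translating adjacency in the cohesive power into a finite-union-of-c.e.-sets statement and then invoking cohesiveness. Everything hinges on local finiteness guaranteeing that the neighbor-index set $J$ is finite; without it the union would be infinite and the cohesiveness reduction would fail, which is exactly why the previous theorem yields only a possibly non-disjoint union.
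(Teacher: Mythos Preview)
Your proof is correct and follows essentially the same approach as the paper: reuse the graph $\mathcal{A}$ from the previous theorem, then show that any neighbor of $[\psi_{n}]$ in $\Pi_{C}\mathcal{A}$ must itself be some $[\psi_{j}]$ by decomposing the adjacency set into finitely many c.e.\ sets (via local finiteness of $f(n)$) and invoking cohesiveness to land in one of them. The paper's argument is the same computation with slightly different variable names.
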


\begin{proof}
Let $G =(V ,R_{})$.  Let $f ,\mathcal{A} ,\psi _{n} ,\rho $ be defined as in the proof of the previous theorem. Let $\varphi $ be a partial computable function such that $[\varphi ] \in \Pi _{C}A$ and  $E([\varphi ] ,[\psi _{m}])$ for some $m \geq 1.$ Then $C \subseteq ^{ \ast }\{i \in \omega  :\varphi (i)\downarrow  \wedge \ (\varphi (i) ,\psi _{m}(i)) \in E\}$.  Since $G$ is locally finite, we have that $\{i \in \omega  :\varphi (i)\downarrow  \wedge \ (\varphi (i) ,\psi _{m}(i)) \in E\}$ is the following finite disjoint union of c.e.\ sets: \medskip

$\coprod _{n :f(m)Rf(n)}\{i \in \omega  :\varphi (i) =\frac{i(i +1)}{2} +n$ where $1 \leq m ,n \leq i +1\}$. \medskip

\noindent Since $C$ is cohesive, there is some $n_{0}$ such that \medskip

$C \subseteq ^{ \ast }\{i :\varphi (i)\downarrow  =\frac{i(i +1)}{2} +n_{0}$$\} \subseteq \{i :\varphi (i)\downarrow  =\psi _{n_{0}}(i)\}$. \medskip

\noindent Hence $[\varphi ] =[\psi _{n_{0}}] .$ Thus, $\Pi _{C}\mathcal{A}$ is isomorphic to the disjoint union $G \sqcup (\Pi _{C}\mathcal{A} -\{[\psi _{m}] :m \geq 1\})$. \medskip 
\end{proof}

\section{Cohesive powers of equivalence relations
}

Let $\mathcal{A} =(A ,E^{\mathcal{A}})$ be an equivalence structure $\mathcal{A}$. The equivalence class of $a \in A$ is
\begin{equation*}eqv^{\mathcal{A}}(a) =\{x \in A :x E^{\mathcal{A}} a\}\text{.}
\end{equation*}We generally omit the superscript when it can be inferred from the context.

\begin{definition}\begin{enumerate}
\item [$($i$)$] Let $\mathcal{A}$ be an equivalence relation. The \emph{character of }$\mathcal{A}$ is the set
\begin{equation*}\chi  (\mathcal{A}) =\{ \langle k ,n \rangle  :\;n ,k >0\;\text{and}\;\mathcal{A}\;\text{has}\;\text{at least $n$}\ \text{equivalence classes of size}\;k\}\text{.}
\end{equation*}

\item [$($ii$)$] We say that $\mathcal{A}$ has \emph{bounded}\emph{ character} if there is some finite $k$ such that all finite equivalence classes of $\mathcal{A}$ have size at most $k$. \end{enumerate}
\end{definition}

\noindent Clearly, two countable equivalence structures are isomorphic if they have the same character and the same number of infinite equivalence classes.

For a set $X$, by $card(X)$  or $\left \vert X\right \vert $ we denote the size of $X .$ Let
\begin{equation*}I n f^{\mathcal{A}} =\{a :eqv^{\mathcal{A}}(a)\text{  is infinite}\}\text{  and }F i n^{\mathcal{A}} =\{a :eqv^{\mathcal{A}}(a)\text{  is finite}\}\text{.}
\end{equation*}The following lemma from \cite{CCHM} gives us some important complexities.

\begin{lemma}
 For any computable equivalence structure $\mathcal{A}$: 

\begin{enumerate}
\item [$($a]$)\{ \langle k$$ ,a \rangle  :c a r d (eqv^{\mathcal{A}}(a))^{\mathcal{}} \leq k\}$ is a $\Pi _{1}^{0}$ set, and $\{ \langle k ,a \rangle  :c a r d (eqv^{\mathcal{A}}(a))^{\mathcal{}} \geq k$ is a $\Sigma _{1}^{0}$ set; 

\item [$($b$)$]$I n f^{\mathcal{A}}$ is a $\Pi _{2}^{0}$ set, and $F i n^{\mathcal{A}}$ is a $\Sigma _{2}^{0}$ set; 

\item [$($c$)$]$\chi  (\mathcal{A})$ is a $\Sigma _{2}^{0}$ set. \end{enumerate}
\end{lemma}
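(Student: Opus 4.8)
The plan is to establish each complexity bound by writing down the defining formula for the relevant set and counting quantifier alternations, using that $\mathcal{A}$ is computable so that $E^{\mathcal{A}}$ is a computable relation and membership in $A$ is decidable. Throughout I will use the computable pairing function $\langle k,n\rangle$ introduced in the preliminaries, which is monotone in each coordinate, so that bounded quantifiers over $\langle k,n\rangle$ remain bounded and do not raise the arithmetical complexity.

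First I would handle part (a), which is the foundation for the rest. The key observation is that $card(eqv^{\mathcal{A}}(a))\geq k$ is witnessed by finitely many existential data: there exist distinct $x_{1},\ldots,x_{k}\in A$ all $E^{\mathcal{A}}$-equivalent to $a$. Since $E^{\mathcal{A}}$ and equality on $A$ are computable, the matrix is computable, so $\{\langle k,a\rangle : card(eqv^{\mathcal{A}}(a))\geq k\}$ is $\Sigma_{1}^{0}$. The complementary statement $card(eqv^{\mathcal{A}}(a))\leq k$ says that there do \emph{not} exist $k+1$ distinct equivalent elements, i.e., it is the negation of a $\Sigma_{1}^{0}$ condition, hence $\Pi_{1}^{0}$. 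I would write both formulas explicitly to make the quantifier structure transparent.

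Next, for part (b), I would express $Inf^{\mathcal{A}}$ by saying that $eqv^{\mathcal{A}}(a)$ has size at least $k$ for \emph{every} $k$: that is, $a\in Inf^{\mathcal{A}}$ iff $(\forall k)(card(eqv^{\mathcal{A}}(a))\geq k)$. By part (a) the inner predicate is $\Sigma_{1}^{0}$, so prefixing a universal number quantifier gives a $\Pi_{2}^{0}$ definition of $Inf^{\mathcal{A}}$. Dually, $Fin^{\mathcal{A}}$ is its complement, so $a\in Fin^{\mathcal{A}}$ iff $(\exists k)(card(eqv^{\mathcal{A}}(a))\leq k)$, which is $\Sigma_{1}^{0}$ inside an existential quantifier, hence $\Sigma_{2}^{0}$. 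Part (c) follows the same pattern: $\langle k,n\rangle\in\chi(\mathcal{A})$ asserts the existence of $n$ pairwise-inequivalent elements each of whose class has size \emph{exactly} $k$. The condition ``class of $a$ has size exactly $k$'' is the conjunction of a $\Sigma_{1}^{0}$ and a $\Pi_{1}^{0}$ statement from part (a), hence $\Pi_{2}^{0}$ (equivalently expressible so that an existential block over the $n$ witnesses keeps it within $\Sigma_{2}^{0}$); prefixing the existential quantifier over the $n$ witnessing elements yields $\Sigma_{2}^{0}$.

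I do not expect a serious obstacle here, since every bound reduces to careful quantifier bookkeeping rather than a substantive construction. The one point demanding care is part (c): the ``exactly $k$'' requirement mixes a $\Sigma_{1}^{0}$ lower bound with a $\Pi_{1}^{0}$ upper bound, so one must check that after bundling the $n$ witnesses under a single existential quantifier the whole formula still collapses to $\Sigma_{2}^{0}$ rather than climbing higher. The standard way to see this is to absorb the finitely many existential witnesses and the existential $\Sigma_{1}^{0}$ parts into one $\Sigma_{1}^{0}$ block, leaving only the $\Pi_{1}^{0}$ ``upper bound'' clauses, so that the normal form is $\exists\,(\text{computable}\,\wedge\,\Pi_{1}^{0})$, which is $\Sigma_{2}^{0}$ as claimed.
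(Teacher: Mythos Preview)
Your argument is correct and is the standard quantifier-counting proof: express ``$\geq k$ elements in the class'' by a block of existentials over a computable matrix, take complements for the $\Pi_{1}^{0}$ bound, and then layer one unbounded number quantifier for parts (b) and (c). Your handling of part (c) is exactly right; the only delicate point is the one you flagged, namely that ``exactly $k$'' is $\Sigma_{1}^{0}\wedge\Pi_{1}^{0}$, and pulling the finitely many existential witnesses (for the $n$ representatives and for the $\Sigma_{1}^{0}$ lower bounds) to the front leaves a $\Pi_{1}^{0}$ matrix, so the whole thing is $\Sigma_{2}^{0}$.

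There is nothing to compare against in the paper itself: the lemma is quoted from \cite{CCHM} and no proof is given here. Your write-up is essentially the argument one would find in that reference.
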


We say that a subset $K$ of $\omega $ is a \emph{character} if there is some equivalence structure with character $K$. This is the same as saying that $K \subseteq  \langle \omega  -\{0\} \rangle  \times  \langle \omega  -0\} \rangle $, for all $n >0$ and $k$,
\begin{equation*} \langle k ,n +1 \rangle  \in K \Rightarrow  \langle k ,n \rangle  \in K\text{.}
\end{equation*}

It was shown in \cite{CCHM} that for any $\Sigma _{2}^{0}$ character $K$, there is a computable equivalence structure $\mathcal{A}$ with character $K$, which has infinitely many infinite equivalence classes while $F i n^{\mathcal{A}}$ is a $\Pi _{1}^{0}$ set.

\begin{theorem}
\cite{CCHM} Let $\mathcal{A}$ be a computable equivalence structure. The structure $\mathcal{A}$ is computably categorical iff it is one of the following types: 

\begin{enumerate}
\item $\mathcal{A}$ has only finitely many finite equivalence classes; 

\item $\mathcal{A}$ has finitely many infinite classes, bounded character, and at most one finite $k$ such that there are infinitely many classes of size $k$. \end{enumerate}
\end{theorem}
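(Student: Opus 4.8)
The plan is to prove both implications by reducing computable categoricity to the question of whether the sizes of equivalence classes, and the finite/infinite dichotomy on classes, can be recovered computably up to a fixed finite amount of advice. Throughout, let $\mathcal{B}$ be an arbitrary computable equivalence structure isomorphic to $\mathcal{A}$, and recall from the complexity lemma above that ``$card(eqv^{\mathcal{A}}(a)) \geq k$'' is $\Sigma^0_1$ while ``$card(eqv^{\mathcal{A}}(a)) \leq k$'' is $\Pi^0_1$, so an exact class size is only $\Delta^0_2$ in general and $Inf^{\mathcal{A}}$ is only $\Pi^0_2$. For the sufficiency direction I would build a computable isomorphism $\mathcal{A} \to \mathcal{B}$ by a back-and-forth construction that matches classes as they are enumerated, using finitely much noncomputable advice. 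In type (1), the hypothesis of only finitely many finite classes means that $Fin^{\mathcal{A}}$ and $Fin^{\mathcal{B}}$ are \emph{finite}, hence computable; I would hard-wire a size-preserving bijection between them as advice and then match the cofinitely many elements lying in infinite classes greedily, sending the $j$-th infinite class of $\mathcal{A}$ revealed to the $j$-th infinite class of $\mathcal{B}$ revealed. Because infinite classes never fill up no conflict ever arises, and since $\mathcal{A}$ and $\mathcal{B}$ have the same (finite or infinite) number of infinite classes the back-and-forth closes.

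For type (2), the key observation is that the \emph{irregular} part of the structure is again finite. There are only finitely many infinite classes, so I fix one representative of each as advice; since $E$ is computable I can then route any element equivalent to such a representative into the matching infinite class of $\mathcal{B}$, greedily as in type (1). Moreover, because the character is bounded and every finite size except one distinguished $k^\ast$ is realized by only finitely many classes, the union of all finite classes of size $\neq k^\ast$ is a \emph{finite} set $F$ of elements, which I take as further advice together with a fixed size-preserving bijection onto the corresponding finite set in $\mathcal{B}$. After removing $F$ and the infinite-class elements, every remaining element of $\mathcal{A}$ lies in a class of size \emph{exactly} $k^\ast$, and this is what makes the matching computable: given such an element I enumerate its class until $k^\ast$ pairwise-equivalent elements appear, at which point the class is known to be complete, and I pair it with a fresh, as-yet-unmatched size-$k^\ast$ class of $\mathcal{B}$. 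Since both structures have infinitely many size-$k^\ast$ classes a fresh partner is always available in each direction, so the back-and-forth closes and yields a computable isomorphism.

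For the necessity direction, suppose $\mathcal{A}$ is of neither type. Then $\mathcal{A}$ has infinitely many finite classes and at least one of: (i) infinitely many infinite classes, (ii) unbounded character, or (iii) two distinct finite sizes each realized by infinitely many classes. In each case I would construct a computable $\mathcal{B} \cong \mathcal{A}$ together with requirements $R_e$ asserting that the $e$-th partial computable function $\varphi_e$ is not an isomorphism $\mathcal{A} \to \mathcal{B}$. Each $R_e$ is met by a local diagonalization: reserve in $\mathcal{B}$ a block of classes, present to $\varphi_e$ a class that currently has some size $k$ (or currently looks finite), wait for $\varphi_e$ to converge on enough elements to commit to an image class in $\mathcal{A}$, and then enlarge the reserved class to a different available size $k'$ (in case (iii)), to a larger size (in case (ii)), or to an infinite class (in case (i)). This forces $\varphi_e$ to send a class of one size onto a class of another size, so $\varphi_e$ is not an isomorphism; the delay before enlarging is exactly what exploits the $\Sigma^0_1$/$\Pi^0_2$ nonuniformity of class size and of infiniteness.

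The main obstacle, and the heart of the argument, is the bookkeeping in the necessity direction: I must defeat every $\varphi_e$ while keeping $\mathcal{B}$ \emph{globally} isomorphic to $\mathcal{A}$. The surgery that fools $R_e$ locally alters the sizes of a few classes, so I must compensate elsewhere so that the final multiset of class sizes and the number of infinite classes agree with those of $\mathcal{A}$. This is precisely where the failure of types (1) and (2) is used: hypotheses (i)--(iii) each guarantee an \emph{infinite reservoir} of classes of the relevant kind, so the finitely many enlargements demanded by the $R_e$ can be absorbed without perturbing the isomorphism type. I expect the finite-injury priority organization --- allocating disjoint blocks of classes to the $R_e$ and arguing that each requirement acts only finitely often --- to be routine once this reservoir is available; the delicate verification is that the limiting structure $\mathcal{B}$ has exactly the character and infinite-class count of $\mathcal{A}$.
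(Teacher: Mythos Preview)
The paper does not prove this theorem; it is quoted from \cite{CCHM} as an external result and then used only to draw the consequence that cohesive powers of computably categorical equivalence structures are independent of the chosen computable copy. There is therefore no in-paper proof to compare your proposal against.

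For what it is worth, your sketch is a faithful outline of the argument in the cited source: sufficiency via finite noncomputable advice plus a greedy back-and-forth on the remaining ``regular'' classes, and necessity via a finite-injury priority construction that diagonalizes against each $\varphi_e$ using the infinite reservoir guaranteed by the failure of (1) and (2). Your case split in the necessity direction is the correct negation. One small inconsistency: you declare $\varphi_e:\mathcal{A}\to\mathcal{B}$ but then describe reserving a class in $\mathcal{B}$ and waiting for $\varphi_e$ to ``commit to an image class in $\mathcal{A}$,'' which is the direction $\mathcal{B}\to\mathcal{A}$. This is harmless for the conclusion (computable categoricity is symmetric, and one diagonalizes against all partial computable maps in either direction), but the write-up should pick one direction and stick with it.
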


Hence if $\mathcal{A}$ is an equivalence structure as in the previous theorem and $\mathcal{D}$ is a computable structure isomorphic to $\mathcal{A} ,$ then for any cohesive set $C$ we have $\Pi _{C}\mathcal{A} \cong \Pi _{C}\mathcal{D} .$

\bigskip

\begin{proposition}
 \label{basiceqv}Let $\ \mathcal{A}$ be a computable equivalence structure. Let $C$ be a cohesive set and let $\mathcal{B} =\Pi _{C}\mathcal{A} .$ \medskip

(a) Then $\chi (\mathcal{B}) =\chi (\mathcal{A}) .$\medskip  

(b) If $\mathcal{A}$ has infinitely many infinite equivalence classes, then $\mathcal{B}$ has infinitely many infinite equivalence classes.\medskip  

If $\mathcal{A}$ has exactly $n$ infinite equivalence classes, then $\mathcal{B}$ has at least $n$ infinite equivalence classes.

\medskip 

(c) If $\mathcal{A}$ has infinitely many infinite equivalence classes, then $\Pi _{C}\mathcal{A} \cong \mathcal{A} .$\medskip

(d) If $\ \mathcal{A}$ has finitely many finite equivalence classes and no infinite equivalence classes, then  $\Pi _{C}\mathcal{A} \cong \mathcal{A} .$
\end{proposition}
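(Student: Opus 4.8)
The plan is to prove (a) by transferring finitary first-order sentences between $\mathcal{A}$ and $\mathcal{B}$ via Theorem~\ref{rumen}, to prove (b) directly through the canonical embedding, to deduce (c) from (a) and (b) together with the stated isomorphism criterion for countable equivalence structures, and to observe that the hypotheses of (d) force $\mathcal{A}$ to be finite.

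For (a) I would fix $k,n>0$ and let $\sigma_{k,n}$ be the first-order $L$-sentence asserting that there are at least $n$ equivalence classes of size exactly $k$, written as $\exists y_1\cdots y_n\big[\bigwedge_{i<j}\neg E(y_i,y_j)\wedge\bigwedge_i\theta_k(y_i)\big]$, where $\theta_k(y)$ says ``exactly $k$ elements are $E$-equivalent to $y$'', namely $\exists z_1\cdots z_k[\bigwedge_\ell E(y,z_\ell)\wedge\bigwedge_{\ell<\ell'}z_\ell\neq z_{\ell'}\wedge\forall w(E(y,w)\rightarrow\bigvee_\ell w=z_\ell)]$. Since $\theta_k$ is a $\Sigma_2^0$ formula, $\sigma_{k,n}$ is (equivalent to) a $\Sigma_2^0$ sentence. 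By Theorem~\ref{rumen}(1) a $\Sigma_2^0$ sentence holds in $\mathcal{B}$ iff it holds in $\mathcal{A}$, so $\langle k,n\rangle\in\chi(\mathcal{B})$ iff $\langle k,n\rangle\in\chi(\mathcal{A})$, giving $\chi(\mathcal{B})=\chi(\mathcal{A})$. What makes this work is that ``having a class of size exactly $k$'' is genuinely finitary and of low quantifier complexity.

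For (b) I would argue directly with the canonical embedding $d:\mathcal{A}\to\mathcal{B}$, $d(a)=[c_a]$. Here $E^{\mathcal{B}}([c_a],[c_b])$ holds iff $C\subseteq^{\ast}\{i:E^{\mathcal{A}}(a,b)\}$; since $C$ is infinite, this set is $\omega$ when $aE^{\mathcal{A}}b$ and $\varnothing$ otherwise, so $d$ preserves and reflects $E$ and is injective. Consequently $d$ sends distinct $\mathcal{A}$-classes to distinct $\mathcal{B}$-classes, and if $eqv^{\mathcal{A}}(a)$ is infinite then its image under $d$ is an infinite subset of a single $\mathcal{B}$-class, so that $\mathcal{B}$-class is infinite. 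Thus the number of infinite classes of $\mathcal{B}$ is at least that of $\mathcal{A}$: if $\mathcal{A}$ has at least $n$ infinite classes then so does $\mathcal{B}$, and if $\mathcal{A}$ has infinitely many then so does $\mathcal{B}$. I would avoid the transfer theorem here because ``being an infinite class'' is not finitary first-order, so the clean two-directional argument of (a) is unavailable and only the embedding-based lower bound survives.

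Finally I would deduce (c) at once: by (a) the structures $\mathcal{A}$ and $\mathcal{B}$ have the same character, and by (b) both have infinitely many, hence countably infinitely many (as $\mathcal{B}$ is countable), infinite equivalence classes; since both are countable, the isomorphism criterion stated above yields $\Pi_C\mathcal{A}\cong\mathcal{A}$. For (d) the hypotheses say every class is finite and there are only finitely many classes, so $A$ is a finite union of finite sets and $\mathcal{A}$ is a finite structure, whence Dimitrov's result that $\Pi_C\mathcal{A}\cong\mathcal{A}$ for finite $\mathcal{A}$ applies. I expect the only genuine obstacle to be the complexity bookkeeping in (a), namely verifying that $\sigma_{k,n}$ really lands in $\Sigma_2^0$ so that the two-way transfer of Theorem~\ref{rumen}(1) applies, together with recognizing that the analogous statement about infinite classes is not finitary and must instead be routed through the embedding in (b).
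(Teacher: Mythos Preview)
Your proposal is correct and follows essentially the same approach as the paper: part (a) via the $\Sigma_2^0$ definability of the character together with Theorem~\ref{rumen}(1), part (b) via the canonical embedding, part (c) by combining (a) and (b) with the isomorphism criterion, and part (d) by observing that $\mathcal{A}$ is finite. Your treatment of (a) is more explicit than the paper's (which simply cites \cite{CCHM} for the $\Sigma_2^0$ definability), but the underlying argument is the same.
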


\begin{proof}
 (a) The character of an equivalence structure is definable by a $\Sigma _{2}^{0}$ sentence (see \cite{CCHM}). \medskip

(b) This holds since there is an embedding of $\mathcal{A}$ into $\mathcal{B}$.\medskip

(c) This follows from (a) and (b). That is, if $\mathcal{A}$ has infinitely many infinite equivalence classes, then $\Pi _{C}\mathcal{A}$ also has infinitely many infinite equivalence classes, and since $\mathcal{A}$ and $\Pi _{C}\mathcal{A}$ have the same character, they are isomorphic. \medskip

(d) This is true since $\mathcal{A}$ is a finite structure.
\end{proof}

\begin{theorem}
Let $\ \mathcal{A} =(A ,E)$ be a computable equivalence structure. Let $C$ be a cohesive set. If $\mathcal{A}$ has a bounded character, then $\Pi _{C}\mathcal{A} \cong \mathcal{A} .$  
\end{theorem}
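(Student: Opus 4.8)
The plan is to use the criterion stated earlier that two countable equivalence structures are isomorphic iff they have the same character and the same number of infinite equivalence classes. Since Proposition~\ref{basiceqv}(a) already gives $\chi(\mathcal{B})=\chi(\mathcal{A})$ for $\mathcal{B}=\Pi_C\mathcal{A}$, everything reduces to matching the number of infinite equivalence classes. If $\mathcal{A}$ has infinitely many infinite classes, Proposition~\ref{basiceqv}(c) finishes immediately, so I would assume $\mathcal{A}$ has exactly $n$ infinite classes for some finite $n\geq 0$. Proposition~\ref{basiceqv}(b) already gives that $\mathcal{B}$ has \emph{at least} $n$ infinite classes, so the whole content is to show that $\mathcal{B}$ has \emph{at most} $n$; this is the step where bounded character is essential.

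The key observation is that bounded character collapses the complexity of $Inf^{\mathcal{A}}$. If every finite class has size at most $k$, then an element lies in an infinite class iff its class has size at least $k+1$, so $Inf^{\mathcal{A}}=\{a: card(eqv^{\mathcal{A}}(a))\geq k+1\}$ is the section at $k+1$ of the $\Sigma_1^0$ set from the preceding complexity lemma, hence c.e. (rather than merely $\Pi_2^0$). Consequently, for any $[\psi]\in B$ the set $\{i: \psi(i)\downarrow\;\in Inf^{\mathcal{A}}\}$ is c.e., so cohesiveness of $C$ forces exactly one of two cases: either $C\subseteq^{\ast}\{i:\psi(i)\in Inf^{\mathcal{A}}\}$, or, using $C\subseteq^{\ast}dom(\psi)$, $C\subseteq^{\ast}\{i:\psi(i)\in Fin^{\mathcal{A}}\}$; call $[\psi]$ \emph{finitary} in the latter case. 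I would first show that a finitary $[\psi]$ has an $E^{\mathcal{B}}$-class of size at most $k$: if $[\psi_0],\dots,[\psi_k]$ were $k+1$ pairwise distinct elements all $E^{\mathcal{B}}$-related to $[\psi]$, then intersecting the finitely many cohesive conditions ``$\psi_r(i)\neq\psi_s(i)$'', ``$\psi_r(i)\,E^{\mathcal{A}}\,\psi(i)$'', and ``$card(eqv^{\mathcal{A}}(\psi(i)))\leq k$'' would yield, for cohesively many $i$, a class of $\mathcal{A}$ of size at most $k$ containing $k+1$ distinct elements, a contradiction. Hence every infinite $E^{\mathcal{B}}$-class consists entirely of non-finitary elements.

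It then remains to bound the number of $E^{\mathcal{B}}$-classes among the non-finitary elements. Writing the $n$ infinite classes of $\mathcal{A}$ as $I_1,\dots,I_n$, each $I_j=eqv^{\mathcal{A}}(a_j)=\{x:x\,E^{\mathcal{A}}\,a_j\}$ is \emph{computable} because $E^{\mathcal{A}}$ is computable, and $Inf^{\mathcal{A}}=I_1\sqcup\cdots\sqcup I_n$. For non-finitary $[\psi]$ the c.e. set $\{i:\psi(i)\in Inf^{\mathcal{A}}\}$ is the finite disjoint union of the c.e. sets $\{i:\psi(i)\in I_j\}$, so by the remark in the introduction that a cohesive set contained up to finitely many elements in a finite union of c.e. sets is contained in one of them, there is a unique index $j(\psi)\in\{1,\dots,n\}$ with $C\subseteq^{\ast}\{i:\psi(i)\in I_{j(\psi)}\}$. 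If $j(\psi)=j(\psi')$, then $\psi(i)\,E^{\mathcal{A}}\,\psi'(i)$ for cohesively many $i$, whence $[\psi]\,E^{\mathcal{B}}\,[\psi']$; thus the non-finitary elements meet at most $n$ distinct $E^{\mathcal{B}}$-classes. Combining this with the lower bound from Proposition~\ref{basiceqv}(b) shows $\mathcal{B}$ has exactly $n$ infinite classes, and together with $\chi(\mathcal{B})=\chi(\mathcal{A})$ the isomorphism follows. I expect the main obstacle to be precisely the passage that upgrades $Inf^{\mathcal{A}}$ to a c.e. set and the accompanying verification that finitary elements cannot generate infinite classes; once that is in place, the counting of infinite classes is a routine application of the finite-union property of cohesive sets.
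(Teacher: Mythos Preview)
Your proof is correct, but it takes a different route from the paper's. The paper's argument is purely syntactic: after disposing of the case of infinitely many infinite classes, it observes that under bounded character (say bound $k$) the statement ``there are exactly $m$ classes of size $\geq k+1$'' is expressible by a single $\Sigma_2^0$ sentence, and then simply invokes Theorem~\ref{rumen} to transfer this sentence from $\mathcal{A}$ to $\Pi_C\mathcal{A}$. Your argument is semantic: you work directly inside the cohesive power, use bounded character to upgrade $Inf^{\mathcal{A}}$ to a c.e.\ set, split elements $[\psi]$ into finitary and non-finitary via the cohesive dichotomy, and then count classes on each side by hand. In effect you are reproving, for this particular configuration, the relevant instance of Theorem~\ref{rumen} from first principles. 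The paper's approach is shorter and shows that the result falls out immediately from the transfer machinery once the right sentence is identified; your approach is more explicit about where bounded character is actually used (namely, in making the cohesive dichotomy on $\{i:\psi(i)\in Inf^{\mathcal{A}}\}$ available) and gives a concrete picture of how the infinite classes of $\mathcal{B}$ line up with those of $\mathcal{A}$ via the index $j(\psi)$.
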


\begin{proof}
  Since the character of $\mathcal{A}$ is bounded, if it is nonempty, let $k \in \omega $ be the largest size of a finite equivalence class of $\mathcal{A}$. If the character of $\mathcal{A}$ is empty, let $k =0$. Recall that $\mathcal{A}$ and $\Pi _{C}\mathcal{A}$ have the same character. If $\mathcal{A}$ has infinitely many infinite equivalence classes, then $\mathcal{A}$ and $\Pi _{C}\mathcal{A}$ are isomorphic. Thus, assume that $\mathcal{A}$ has at most finitely many infinite equivalence classes.    

If $\mathcal{A}$ has no infinite equivalence class, then $\mathcal{A}$ satisfies the following $\Pi _{1}^{0}$ sentence, saying that there are no $k +1$ non-equivalent elements:\smallskip

\begin{center}
$( \forall x_{1}) \cdots ( \forall x_{k +1})[\bigvee _{1 \leq i <j \leq k +1}x_{i}Ex_{j}]$\smallskip
\end{center}\par
\noindent Since $\Pi _{C}\mathcal{A}$ satisfies the same sentence, it has no infinite equivalence classes, so $\mathcal{A}$ and $\Pi _{C}\mathcal{A}$ are isomorphic. 

Thus, assume that $\mathcal{A}$ has $m$ infinite equivalence classes, where $m \in \omega $ and $m >0$. Hence $\mathcal{A}$ satisfies the following $\Sigma _{2}^{0}$ sentence, saying that there are exactly $m$ equivalence classes with at least $k +1$ elements, hence infinite. We will use notation $x_{i}^{l}$ for variables.

\begin{center}
$( \exists x_{1}^{1}) \cdots ( \exists x_{k +1}^{1}) \cdots ( \exists x_{1}^{m}) \cdots ( \exists x_{k +1}^{m})( \forall y_{1}) \cdots ( \forall y_{k +1})[{\textstyle\bigwedge _{\begin{array}{c}1 \leq l \leq m \\
1 <i <j \leq k +1\end{array}}}(x_{i}^{l} \neq x_{j}^{l} \wedge x_{i}^{l}Ex_{j}^{l}) \wedge (\bigwedge _{1 \leq i <j \leq k +1}$ $(y_{i} \neq y_{j} \wedge y_{i}Ey_{j}) \Rightarrow \bigvee _{l =1 ,\ldots  ,m}y_{1}Ex_{1}^{l})]$

\end{center}\par
\noindent Hence $\Pi _{C}\mathcal{A}$ satisfies the same sentence, so it has exactly $m$ infinite equivalence classes, so it is isomorphic to $\mathcal{A} .$
\end{proof}

\begin{theorem}
Let $\ \mathcal{A} =(A ,E)$ be a computable equivalence structure. Let $C$ be a cohesive set. If $\mathcal{A}$ has an unbounded character, then $\Pi _{C}\mathcal{A}$ has infinitely many infinite equivalence classes. 

Hence if $\mathcal{A}$  has an unbounded character and finitely many (possibly zero) infinite equivalence classes, then $\Pi _{C}\mathcal{A} \ncong \mathcal{A} .$
\end{theorem}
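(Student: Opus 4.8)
The plan is to prove the first assertion by constructing, inside $\mathcal{B}=\Pi_{C}\mathcal{A}$, infinitely many pairwise $E^{\mathcal{B}}$-inequivalent elements each of which lies in an infinite equivalence class. The second assertion is then immediate: the number of infinite equivalence classes is an isomorphism invariant (an isomorphism of equivalence structures carries infinite classes bijectively to infinite classes), so a structure with only finitely many infinite classes cannot be isomorphic to one with infinitely many. Thus the whole theorem reduces to the first statement.

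The first step is to read ``unbounded character'' combinatorially. If $\mathcal{A}$ has unbounded character, then the set of sizes realized by its finite equivalence classes is an unbounded, hence infinite, subset of $\omega$; since only finitely many sizes lie below any fixed $N$, there are infinitely many distinct finite classes of size at least $N$ for every $N$. In particular, for each $i$ there exist $i$ pairwise non-equivalent elements of $A$, each lying in a class of size at least $i$. Because $E^{\mathcal{A}}$ and $A$ are computable, I would locate such a configuration by an unbounded search: enumerate arrays $(w^{i,m}_{j})_{0\le m,j<i}$ of elements of $A$ and halt at the first one satisfying (i) for each fixed $m$, the elements $w^{i,m}_{0},\dots,w^{i,m}_{i-1}$ are pairwise distinct and pairwise $E^{\mathcal{A}}$-equivalent, and (ii) for $m\neq m'$, $\neg(w^{i,m}_{0}\mathrel{E^{\mathcal{A}}}w^{i,m'}_{0})$. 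Both conditions are decidable and the previous sentence guarantees a witness exists, so $i\mapsto(w^{i,m}_{j})_{m,j<i}$ is uniformly computable.

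Next I would package these arrays into partial computable functions. For each pair $(m,j)$ set $\psi_{m,j}(i)=w^{i,m}_{j}$ for all $i>\max(m,j)$ and leave $\psi_{m,j}$ undefined on the finite remainder; then $\psi_{m,j}$ has cofinite domain, so $C\subseteq^{\ast}\mathrm{dom}(\psi_{m,j})$ and $[\psi_{m,j}]\in B$. Fixing $m$ and varying $j$: by (i) the values $\psi_{m,j}(i)$ and $\psi_{m,j'}(i)$ lie in a common $\mathcal{A}$-class for all large $i$, so $C\subseteq^{\ast}\{i:\psi_{m,j}(i)\mathrel{E^{\mathcal{A}}}\psi_{m,j'}(i)\}$ and hence $E^{\mathcal{B}}([\psi_{m,j}],[\psi_{m,j'}])$ holds, while for $j\neq j'$ we have $\psi_{m,j}(i)\neq\psi_{m,j'}(i)$ for all large $i$, so $\{i:\psi_{m,j}(i)=\psi_{m,j'}(i)\}$ is finite and $[\psi_{m,j}]\neq[\psi_{m,j'}]$. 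Hence the $E^{\mathcal{B}}$-class $\mathcal{C}_{m}$ of $[\psi_{m,0}]$ is infinite. For $m\neq m'$, condition (ii) together with transitivity of $E^{\mathcal{A}}$ forces $\neg(\psi_{m,j}(i)\mathrel{E^{\mathcal{A}}}\psi_{m',j'}(i))$ for all large $i$; thus $\{i:\psi_{m,j}(i)\mathrel{E^{\mathcal{A}}}\psi_{m',j'}(i)\}$ is finite, so $E^{\mathcal{B}}([\psi_{m,j}],[\psi_{m',j'}])$ fails and $\mathcal{C}_{m}\neq\mathcal{C}_{m'}$. Therefore $\{\mathcal{C}_{m}:m\in\omega\}$ are infinitely many distinct infinite equivalence classes of $\mathcal{B}$.

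The routine parts are the $\subseteq^{\ast}$ verifications, each of which is settled by the two facts that $C\subseteq^{\ast}X$ holds automatically when $X$ is cofinite and fails automatically when $X$ is finite (as $C$ is infinite). The genuinely delicate point, and the main thing to get right, is the effective-combinatorial step: extracting from unbounded character the guarantee that for each $i$ one can find $i$ pairwise disjoint classes of size at least $i$, and then using the decidability of $E^{\mathcal{A}}$ to convert the mere existence of such a configuration into a terminating computable search, so that every $\psi_{m,j}$ is genuinely partial computable. Everything else is bookkeeping with the definitions of $=_{C}$ and $E^{\mathcal{B}}$.
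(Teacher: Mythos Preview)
Your proposal is correct and follows essentially the same approach as the paper: both arguments build, from the unbounded character, a computable doubly-indexed family of elements lying in large, pairwise distinct equivalence classes, package them into partial computable functions with cofinite domain, and then verify directly from the definitions of $=_{C}$ and $E^{\mathcal{B}}$ that the resulting elements of $\Pi_{C}\mathcal{A}$ give infinitely many distinct infinite classes. The only organizational difference is that the paper first chooses a single computable sequence $a(0),a(1),\ldots$ of representatives of distinct classes with $|eqv(a(k))|>k$ and then splits it via the pairing function $\langle m,i\rangle$, whereas you search at each $i$ for an entire $i\times i$ array; this is a bookkeeping variation, not a genuinely different idea.
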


\begin{proof}
  Without loss of generality, we will assume that $A =\omega  .$ Hence we can order the elements in $A$ by the usual ordering of the natural numbers. Let $\mathcal{B} =\Pi _{C}\mathcal{A} ,$ and $\mathcal{B} =(B ,E_{B})$. Choose a computable sequence of elements in $A :\medskip $

$a(0) ,a(1) ,a(2) ,a(3) ,\ldots $ \medskip

\noindent such that they all belong to distinct equivalence classes, and for every $k ,$ the equivalence class of $a(k)$ has $ >k$ elements. Since $E$ is computable and the character of $\mathcal{A}$ is unbounded, such a sequence can be obtained by enumerating $A$ and checking the conditions. We can think of $a( \langle m ,i \rangle )$ as a representative of the equivalence class in $\mathcal{A}$ where a partial function $\psi _{m , \ast }$ might take its $i^{th}$ coordinate value (for $i \geq 0$). That is, define partial computable functions $\psi _{m ,n}(i)$, for $m ,i \geq 0$ and $n \geq 1 ,$ as follows:\bigskip

$\psi _{m ,n} (i) =\left \{\begin{array}{cc}c_{n} & c_{n}\text{  is the }n^{th}\text{  element in }eqv(a \langle m ,i \rangle )\text{  if it exists;
} \\
\uparrow  & \text{otherwise.}\end{array}\right .$

\bigskip \noindent Every $\psi _{m ,n}$ is defined for all except possibly finitely many initial values, so $C \subseteq ^{ \ast }dom(\psi _{m ,n})$. Hence $[\psi _{m ,n}] \in B .$ \medskip

Fix $m .$ Let $n_{1} \neq n_{2}$. Then we have that $[\psi _{m ,n_{1}}] \neq [\psi _{m ,n_{2}}]$ since $\left \vert eqv(a( \langle m ,i \rangle )\right \vert  > \langle m ,i \rangle  \geq i$, so starting with some $i ,\text{ }eqv(a \langle m ,i \rangle )\text{  will have the }n_{1}^{th}$ and the $n_{2}^{th}$ elements and they will be distinct. Also, $[\psi _{m ,n_{1}}]E_{B}[\psi _{m ,n_{2}}]$  since the values of $\psi _{m ,n_{1}}$ and $\psi _{m ,n_{2}}$, when defined, are from the same equivalence class in $\mathcal{A}$. Hence the equivalence class of $[\psi _{m ,1}]$ is infinite.\medskip  

Now, let $m_{1} \neq m_{2} .$ Then $ \lnot ([\psi _{m_{1} ,1}]E_{B}[\psi _{m_{2} ,1}])$ since the values of $\psi _{m_{1} ,1}$ and $\psi \text{}_{m_{2} ,1}$, when defined, are from different equivalence classes in $\mathcal{A}$. 
\end{proof}

\begin{corollary}
Let $\ \mathcal{A} =(A ,E)$ be a computable equivalence structure. Let $C$ be a cohesive set. Then $\Pi _{C}\mathcal{A} \cong \mathcal{A}$ iff $\mathcal{A}$ has a bounded character or infinitely many equivalence classes.
\end{corollary}

It was shown in \cite{CCH} that the following model-theoretic result holds for the equivalence structures. Let the formulas $\gamma _{k}(x)$  state that the equivalence class of $x$ has at least $k$ elements, where $k \in \omega  -\{0\}$. Then the language of equivalence relations $\{E\}$ expanded with unary predicates $\{\gamma _{k} :k \geq 1\}$ has quantifier elimination; i.e., every first-order formula in the original language is logically equivalent to a quantifier-free formula in the new language. Since the formulas $\gamma _{k}(x)$ are $\Sigma _{1}^{0}$ formulas, it follows that a computable equivalence structure $\mathcal{A}$ and its cohesive powers satisfy the same first-order sentences. However, in some cases, the distinction can be made by using computable (infinitary) sentences.

\begin{corollary}
Let $\ \mathcal{A} =(A ,E)$  be a computable equivalence structure with unbounded character and no infinite equivalence classes. Let $C$ be a cohesive set. Then there is a computable infinitary $\Sigma _{3}$ sentence $\alpha $ such that $\Pi _{C}\mathcal{A} \models $ $\alpha $ and  $\mathcal{A} \models  \lnot \alpha  .$
\end{corollary}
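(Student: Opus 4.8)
The plan is to take $\alpha$ to be the sentence asserting the existence of at least one infinite equivalence class, and then to verify both that it has the advertised syntactic complexity and that it separates the two structures. Concretely, recall the formulas $\gamma_k(x)$ from the discussion preceding the corollary: each $\gamma_k(x)$ says that the equivalence class of $x$ has at least $k$ elements and is a (finitary) $\Sigma_1$ formula. I would set
\[
\alpha \;=\; \exists x \bigwedge_{k \geq 1} \gamma_k(x),
\]
so that $\alpha$ asserts that some element lies in an infinite equivalence class.

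First I would pin down the complexity. Writing $\Phi(x) := \bigwedge_{k \geq 1} \gamma_k(x)$, this is a c.e.\ conjunction of $\Sigma_1$ formulas (viewing each $\gamma_k$ as $\forall \overline{u}\,\gamma_k$ with $\overline{u}$ empty), hence a computable $\Pi_2$ formula by the classification of computable infinitary formulas recalled in the introduction. Then $\alpha = \exists x\,\Phi(x)$ is a single existential block in front of a computable $\Pi_2$ formula, so $\alpha$ is a computable $\Sigma_3$ sentence, as required.

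Next I would check the two satisfaction claims. Since $\mathcal{A}$ has no infinite equivalence classes, for each $a \in A$ there is some $k$ with $\mathcal{A} \not\models \gamma_k(a)$; thus no $a$ satisfies $\Phi$, and $\mathcal{A} \models \lnot\alpha$. For the cohesive power, the theorem just proved shows that because $\mathcal{A}$ has unbounded character, $\Pi_C\mathcal{A}$ has infinitely many infinite equivalence classes; in particular it has at least one, witnessed by some $b$ whose class satisfies $\gamma_k(b)$ for every $k$, so $\Pi_C\mathcal{A} \models \Phi(b)$ and therefore $\Pi_C\mathcal{A} \models \alpha$.

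I do not expect a genuine obstacle: the mathematical content is entirely supplied by the preceding theorem, which already produces the infinite classes in $\Pi_C\mathcal{A}$, and the only care required is bookkeeping in the syntactic classification --- confirming that the c.e.\ conjunction $\bigwedge_k \gamma_k$ lands at the $\Pi_2$ level and that prefixing one existential quantifier places $\alpha$ exactly at computable $\Sigma_3$ rather than lower. It is worth noting in the write-up that $\alpha$ cannot be replaced by an ordinary first-order sentence, which is consistent with the quantifier-elimination result cited just before the corollary: $\mathcal{A}$ and $\Pi_C\mathcal{A}$ agree on all finitary sentences, so a genuinely infinitary $\alpha$ is necessary to detect the difference.
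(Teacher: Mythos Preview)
Your proposal is correct and follows essentially the same approach as the paper: both take $\alpha$ to assert the existence of an infinite equivalence class via $\exists x\bigwedge_k\gamma_k(x)$, invoke the preceding theorem to see that $\Pi_C\mathcal{A}$ has an infinite class while $\mathcal{A}$ does not, and classify $\alpha$ as computable $\Sigma_3$. Your write-up is in fact a bit more careful than the paper's in justifying the complexity bound and in noting why a finitary sentence cannot work.
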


\begin{proof}
 We have that  $\Pi _{C}\mathcal{A}$ has an infinite equivalence class, while $\mathcal{A}$ does not. Let $\alpha $ be the sentence saying that there is an infinite equivalence class:\bigskip

$ \exists x\bigwedge _{n \in \omega }$ $[ \exists y_{1} \cdots  \exists y_{n}$ $\bigwedge _{1 \leq i <j \leq n}(y_{i} \neq y_{j}$ $ \wedge xEy_{i})] .$ \bigskip

\noindent  Hence $\alpha $ is a computable $\Sigma _{3}$ sentence true in $\Pi _{C}\mathcal{A}$ but false in $\mathcal{A}$.  
\end{proof}

\section{
Injection structures
}
We will now study structures with a single unary function. An \emph{injection structure} $\mathcal{A} =(A ,f)$ consists of a non-empty set $A$ with an one-to-one function $f :A \rightarrow A$. Let $f^{0}(a) =_{def}a$. Given $a \in A$, the \emph{orbit of }$a$\emph{\ under }$f$ is defined as\begin{equation*}\mathcal{O}_{f} (a) =\{b \in A :( \exists n \in \omega )[f^{n}(a) =b \vee f^{n} (b) =a]\}\text{.}
\end{equation*}

\noindent We have that the size of orbit $\mathcal{O}_{f} (a)$ is $k \geq 1$ if and only if $f^{k} (a) =a$ and $( \forall t <k)[f^{t}(a) \neq a]$. Hence the property that $c a r d (\mathcal{O}_{f} (a)) =k$ is computable. 

By analogy with the character of an equivalence structure, we define the \emph{character} $\chi  (\mathcal{A})$ of an injection structure $\mathcal{A}$ as follows:
\begin{equation*}\chi  (\mathcal{A}) =\{ \langle k ,n \rangle  \in  \langle \omega  -\{0\} \rangle  \times  \langle \omega  -\{0\} \rangle  :\mathcal{A}\;\text{has at least $n$}\ \text{orbits of size $k$}\}\text{.}
\end{equation*}Hence $ \langle k ,n \rangle  \in \chi  (\mathcal{A})$ if and only if\medskip  

$( \exists x_{1})\cdots  ( \exists x_{n})\left (\bigwedge _{i =1}^{n}c a r d (\mathcal{O}_{f} (x_{i})) =k\; \wedge \;\bigwedge _{i \neq j}( \forall t <k)[f^{t}(x_{i}) \neq x_{j}]\right )$.\medskip

By $r a n (f)$ we denote the range of $f$, $r a n (f) =f(A)$.  An injection structure $(A ,f)$ may have two types of infinite orbits: $Z$-orbits, which are isomorphic to $(\mathbb{Z} ,S)$ and in which every element is in $r a n (f)$, and $\omega $-orbits, which are isomorphic to $(\omega  ,S)$ and have the form $\mathcal{O}_{f} (a) =\{f^{n} (a) :n \in \omega \}$ for some $a \notin r a n (f)$. Thus, injection structures are characterized by the number of orbits of size $k$ for each finite $k$, and by the number of orbits of types $Z$ and of type $\omega $.

For every computable injection structure $\mathcal{A} =(A ,f)$, we have the following arithmetic complexity of important relations:

\begin{enumerate}
\item [(a)] $\{(k ,a) :a \in  ra n (f^{k})\}$ is a $\Sigma _{1}^{0}$ set, 

\item [(b)] $\{(a ,k) :c a r d (\mathcal{O}_{f} (a)) \geq k\}$ is a $\Sigma _{1}^{0}$ set, 

\item [(c)] $\{a :\mathcal{O}_{f} (a)\;\text{is infinite}\}$ is a $\Pi _{1}^{0}$ set, 

\item [(d)] $\{a :\mathcal{O}_{f} (a)\; \text{has type  }Z\}$ is a $\Pi _{2}^{0}$ set, 

\item [(e)] $\;\{a :\mathcal{O}_{f} (a) \text{has type }\omega \}$ is a $\Sigma _{2}^{0}$ set, and 

\item [(f)] $\chi  (\mathcal{A})$ is a $\Sigma _{1}^{0}$ set.\end{enumerate}

It was shown in \cite{CHR} that for any c.e.\ character
$K$, there is a computable injection structure $\mathcal{A} =(\omega  ,f)$
with character
$K$
and any specified finite or countably infinite number of orbits of types
$\omega $
and $Z$ and for which $ran(f)$
is computable and
$\{a :\mathcal{O}_{f}(a)\;\text{is finite}\}$
is computable.

\begin{theorem}
\cite{CHR} A computable injection structure $\mathcal{A}$ is computably categorical if and only if $\mathcal{A}$ has finitely many infinite orbits.
\end{theorem}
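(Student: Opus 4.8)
\noindent The plan is to argue both directions, relying on the standard fact that a computable structure is computably categorical exactly when any two of its computable copies are computably isomorphic; for the negative direction it therefore suffices to produce two computable copies of $\mathcal{A}$ admitting no computable isomorphism between them. I will use repeatedly that in a computable injection structure the predicate $card(\mathcal{O}_f(a))=k$ is computable, that orbit equivalence (lying in a common orbit) is $\Sigma^0_1$ since it is witnessed by $\exists n(f^n(a)=b \vee f^n(b)=a)$, and that $ran(f)$ is c.e., so that the set of elements not in $ran(f)$ -- which is exactly the set of initial points of the $\omega$-orbits -- is $\Pi^0_1$.

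\smallskip
\noindent \emph{Finitely many infinite orbits $\Rightarrow$ computably categorical.} Let $\mathcal{B}=(B,g)$ be an arbitrary computable copy and let $a_1,\dots,a_p$ list representatives of the (finitely many) infinite orbits of $\mathcal{A}$, choosing the unique non-range point as the representative of each $\omega$-orbit; fix matching representatives $b_1,\dots,b_p$ of the infinite orbits of $\mathcal{B}$ of the corresponding types. Because these are finitely many numbers, I may treat them as nonuniform parameters. Given $x\in A$, I run three searches in parallel: for a least $k$ with $f^k(x)=x$, for some $i$ and $n$ with $f^n(a_i)=x$, and for some $Z$-type $i$ and $n\geq 1$ with $f^n(x)=a_i$. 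Since every element lies either in a finite orbit or in one of the finitely many infinite orbits $\mathcal{O}_f(a_i)$, one branch must halt; this is precisely where finiteness of the set of infinite orbits is used, as it converts the otherwise non-computable task of locating $x$ among infinite orbits into a terminating parallel search. I then set $\Phi(x)=g^n(b_i)$ in the forward case, $\Phi(x)=g^{-n}(b_i)$ (computed by searching for the unique $g$-preimage, which exists as $b_i$ lies in a $Z$-orbit) in the backward case, and in the finite-orbit case I use a computable back-and-forth bijection $\beta$ between the finite orbits of $\mathcal{A}$ and of $\mathcal{B}$. The map $\beta$ is built by enumerating, for each size $k$, the cycles of that size in both structures and pairing them in order of appearance; the pairing succeeds because the isomorphism $\mathcal{A}\cong\mathcal{B}$ forces the two counts to agree for every $k$. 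A routine check shows $\Phi$ is a well-defined computable isomorphism.

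\smallskip
\noindent \emph{Infinitely many infinite orbits $\Rightarrow$ not computably categorical.} Since every infinite orbit is of type $\omega$ or $Z$, by pigeonhole $\mathcal{A}$ has either infinitely many $\omega$-orbits or infinitely many $Z$-orbits; in either subcase I build two computable copies $\mathcal{A}_1,\mathcal{A}_2$ of $\mathcal{A}$ that agree on a fixed standard presentation of all the remaining orbits and differ only on the relevant infinite part. Fix a noncomputable c.e. set $K$. If $\mathcal{A}$ has infinitely many $\omega$-orbits, present $\mathcal{A}_1$ so that the set of non-range points is computable, and build $\mathcal{A}_2$ by laying down $\omega$-chains $(x_e)_{e\in\omega}$ and, whenever $e$ enters $K$, prepending a new permanent initial point $w_e$ with $g(w_e)=x_e$; then $x_e$ is an initial point iff $e\notin K$, so the $\Pi^0_1$ set of non-range points of $\mathcal{A}_2$ is noncomputable. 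A computable isomorphism must send non-range points to non-range points and has computable inverse, so it would make the non-range set of $\mathcal{A}_2$ computable, a contradiction. If instead $\mathcal{A}$ has infinitely many $Z$-orbits, present $\mathcal{A}_1$ so that orbit equivalence is computable (put orbit $n$ on a computable block), and build $\mathcal{A}_2$ with seed pairs $(u_e,v_e)$: so long as $e\notin K$ I extend two separate bi-infinite chains through $u_e$ and $v_e$, but if $e$ enters $K$ I splice the two chains into a single $Z$-orbit and continue extending it in both directions. Then $u_e$ and $v_e$ share an orbit iff $e\in K$, so orbit equivalence in $\mathcal{A}_2$ is noncomputable, while -- because both $K$ and $\overline{K}$ are infinite -- the total number of $Z$-orbits remains $\aleph_0$ and $\mathcal{A}_2\cong\mathcal{A}$. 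As orbit equivalence is isomorphism-invariant and the isomorphism has computable inverse, a computable isomorphism would transfer the computability of orbit equivalence from $\mathcal{A}_1$ to $\mathcal{A}_2$, again a contradiction.

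\smallskip
\noindent The main obstacle is the $Z$-orbit subcase of the negative direction: one must verify that the dynamic splicing keeps $g$ a total computable injection on a computable domain and, crucially, that the global isomorphism type is preserved -- exactly $\aleph_0$ many $Z$-orbits -- no matter how the joins fall, which is what forces the bookkeeping to balance the splicing actions against the infiniteness of both $K$ and $\overline{K}$. The positive direction is comparatively routine once the parallel-search idea is in place, the only point requiring care being that the finite-orbit back-and-forth $\beta$ and the infinite-orbit clauses assemble into a single total computable function.
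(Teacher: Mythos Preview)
The paper does not contain a proof of this statement: it is quoted from \cite{CHR} as a background result and used without argument. There is therefore no in-paper proof to compare your attempt against.

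That said, your outline is along the standard lines and is essentially sound. In the forward direction the parallel search terminates precisely because the infinite orbits are finite in number, and the nonuniform choice of matching representatives $b_1,\dots,b_p$ is legitimate for computable categoricity. In the backward direction your case split on $\omega$- versus $Z$-orbits, encoding a noncomputable c.e.\ set respectively into the non-range set (a $\Pi^0_1$ invariant) or into orbit equivalence (a $\Sigma^0_1$ invariant), is a correct strategy. The one place that would benefit from more detail is the $Z$-orbit splicing: you should make explicit that at the stage $e$ enters $K$ you join the current right endpoint of one finite segment to the left endpoint of the other, so that $g$ remains a total computable injection on a computable domain throughout, and that the resulting structure has exactly $\aleph_0$ many $Z$-orbits regardless of which $e$ enter $K$.
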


Hence if $\mathcal{A}$ is an injection structure with finitely many infinite orbits and $\mathcal{D}$ is a computable structure isomorphic to $\mathcal{A} ,$ then for any cohesive set $C$ we have $\Pi _{C}\mathcal{A} \cong \Pi _{C}\mathcal{D} .$\smallskip 

By $f^{ -1}(b)$ we will denote the unique $a$ such that $f(a) =b$ if it exists, in symbols $f^{ -1}(b)\downarrow  =a$; otherwise $f^{ -1}(b)$ is not defined which we also denote by $f^{ -1}(b)\uparrow $. For $n \geq 1 ,$ we denote by $f^{ -n}$ a partial function $(f^{ -1})^{n}$. \medskip

Since the injection structures have the following axiom \medskip

$ \forall x \forall y[f(x) =f(y) \Rightarrow x =y]$, \medskip

\noindent we have that a cohesive power of a computable injection structure is an injection structure. We would like to determine the isomorphism types of such cohesive powers.
\begin{proposition}
 \label{basicinj}Let $\ \mathcal{A} =(A ,f)$ be a computable injection structure. Let $C$ be a cohesive set, and $\mathcal{B} =\Pi _{C}\mathcal{A} .$

(a) Then $\chi (\mathcal{B}) =\chi (\mathcal{A}) .$

(b) The structures $\mathcal{A}$ and $\mathcal{B}$ have the same number of $\omega $-orbits.

(c)\  If $\mathcal{A}$ has bounded character and no infinite orbits, then $\mathcal{A} \cong \mathcal{B} .$
\end{proposition}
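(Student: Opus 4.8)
The plan is to treat the three parts in order, leaning throughout on the transfer principle of Theorem~\ref{rumen} together with an explicit description of $ran(f^{\mathcal{B}})$ in the cohesive power.

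For part (a), I would note that for each fixed pair $\langle k,n\rangle$ the assertion ``$\langle k,n\rangle\in\chi$'' is expressed by the existential first-order sentence displayed just before the proposition, namely $(\exists x_{1})\cdots(\exists x_{n})\bigl(\bigwedge_{i=1}^{n} card(\mathcal{O}_{f}(x_{i}))=k \wedge \bigwedge_{i\neq j}(\forall t<k)[f^{t}(x_{i})\neq x_{j}]\bigr)$. Since $k$ is fixed, both $card(\mathcal{O}_{f}(x_{i}))=k$ (i.e.\ $f^{k}(x_{i})=x_{i}\wedge\bigwedge_{0<t<k}f^{t}(x_{i})\neq x_{i}$) and the bounded quantifier $(\forall t<k)$ unfold into quantifier-free matrices, so the sentence is $\Sigma_{1}^{0}$, hence $\Sigma_{2}^{0}$. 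By Theorem~\ref{rumen}(1) it holds in $\mathcal{A}$ iff it holds in $\mathcal{B}$, and as this holds for every $\langle k,n\rangle$ we obtain $\chi(\mathcal{A})=\chi(\mathcal{B})$.

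For part (b), the key reduction is that in any injection structure the number of $\omega$-orbits equals the number of elements outside the range of $f$: each element not in the range begins an orbit that must be infinite (injectivity forbids cycling back, as that would place the start in the range), hence an $\omega$-orbit, and distinct such elements begin distinct orbits. Since $\mathcal{B}$ is again an injection structure, it suffices to compare $card(A\setminus ran(f^{\mathcal{A}}))$ with $card(B\setminus ran(f^{\mathcal{B}}))$. I would first prove that $[\psi]\in ran(f^{\mathcal{B}})$ iff $C\subseteq^{\ast}\{i:\psi(i)\in ran(f)\}$: the forward direction reads off the preimage $\phi=f^{-1}\circ\psi$, which is partial computable because $f$ is an injection, while the reverse direction is immediate from the definition of $f^{\mathcal{B}}$. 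As $ran(f)$ is $\Sigma_{1}^{0}$ (fact (a)), the set $\{i:\psi(i)\!\downarrow\,\in ran(f)\}$ is c.e., so cohesiveness forces exactly one of $C\subseteq^{\ast}\{i:\psi(i)\in ran(f)\}$ or $C\subseteq^{\ast}\{i:\psi(i)\notin ran(f)\}$; thus $B\setminus ran(f^{\mathcal{B}})=\{[\psi]:C\subseteq^{\ast}\{i:\psi(i)\notin ran(f)\}\}$. I then split into cases. If $A\setminus ran(f)=\{b_{1},\ldots,b_{r}\}$ is finite, then for any such $[\psi]$ the c.e.\ sets $\{i:\psi(i)=b_{j}\}$ cover $C$ up to finitely much, so by the finite-union property of cohesive sets recorded in the introduction $C\subseteq^{\ast}\{i:\psi(i)=b_{j}\}$ for a unique $j$, giving $[\psi]=[c_{b_{j}}]$; hence there are exactly $r$ elements outside the range, matching $\mathcal{A}$. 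If $A\setminus ran(f)$ is infinite, I use the canonical embedding: each $b\notin ran(f)$ maps to $[c_{b}]\notin ran(f^{\mathcal{B}})$ (since $\{i:b\in ran(f)\}=\varnothing$), and distinct orbit-starts map into distinct orbits of $\mathcal{B}$, so $\mathcal{B}$ has infinitely many $\omega$-orbits, as does $\mathcal{A}$.

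For part (c), bounded character together with the absence of infinite orbits yields a finite $k$ bounding all orbit sizes, so $\mathcal{A}$ satisfies the $\Pi_{1}$ sentence $\forall x\bigvee_{j=1}^{k}(f^{j}(x)=x)$. This is $\Pi_{2}^{0}$, so by Theorem~\ref{rumen}(1) it also holds in $\mathcal{B}$, whence $\mathcal{B}$ has only finite orbits, in particular no $Z$- or $\omega$-orbits. Combined with part (a), $\mathcal{A}$ and $\mathcal{B}$ then have the same character and the same (zero) number of infinite orbits of each type, so by the classification of injection structures they are isomorphic. The step I expect to be the main obstacle is the characterization of $ran(f^{\mathcal{B}})$ in part (b) and the subsequent counting: identifying the partial-computable preimage and then invoking cohesiveness correctly, both the dichotomy for a single c.e.\ set and the finite-union property, to pin down the elements outside the range. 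Parts (a) and (c) are then essentially direct applications of the transfer theorem once the relevant assertions are recognized as low-level arithmetic sentences.
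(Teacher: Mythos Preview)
Your proof is correct, and parts (a) and (c) match the paper's argument almost exactly. Part (b), however, is handled quite differently. The paper stays purely syntactic: it observes that ``there are at least $n$ $\omega$-orbits'' is expressed by the single $\Sigma_{2}^{0}$ sentence
\[
(\exists x_{1})\cdots(\exists x_{n})(\forall y)\Bigl[\bigwedge_{1\le i<j\le n}x_{i}\neq x_{j}\ \wedge\ \bigwedge_{1\le i\le n}x_{i}\neq f(y)\Bigr],
\]
so both this sentence and its $\Pi_{2}^{0}$ negation transfer by Theorem~\ref{rumen}, immediately giving the same number of $\omega$-orbits in $\mathcal{A}$ and $\mathcal{B}$. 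Your route is semantic: you characterize $ran(f^{\mathcal{B}})$ explicitly inside the cohesive power, then count the elements outside it by a cohesiveness argument, splitting on whether $A\setminus ran(f)$ is finite. Both work; the paper's version is shorter and keeps the proof uniform with (a) and (c), while yours has the virtue of making the structure of $\mathcal{B}$ concretely visible and not relying on finding the right sentence. One small slip: in your biconditional ``$[\psi]\in ran(f^{\mathcal{B}})$ iff $C\subseteq^{\ast}\{i:\psi(i)\in ran(f)\}$'' you have the labels ``forward'' and ``reverse'' interchanged---the preimage $\phi=f^{-1}\circ\psi$ is needed for the right-to-left implication, while left-to-right is the one that is immediate from the definition of $f^{\mathcal{B}}$.
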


\begin{proof}
(a) The character of a computable injection structure is definable by a $\Sigma _{1}^{0}$ sentence.\medskip

(b) Let $n \in \omega  -\{0\}.$ We can say that there are $ \geq n$  many $\omega $-orbits by the following $\Sigma _{2}^{0}$ sentence $\sigma _{n}$:\medskip

$( \exists x_{1}) \cdots ( \exists x_{n})( \forall y)[$
$\bigwedge _{1 \leq i <j \leq n}x_{i} \neq x_{j}$ $ \wedge $$\bigwedge _{1 \leq i \leq n}x_{i} \neq f(y)]$.

\medskip Thus, having $ <n$ many $\omega $-orbits can be expressed by a $\Pi _{2}^{0}$ sentence, $ \lnot \sigma _{n}$. Hence if $\mathcal{A}$ has no $\omega $-orbits, $\mathcal{B}$ has no $\omega $-orbits; and if $\mathcal{A}$ has exactly $n$ many $\omega $-orbits, $\mathcal{B}$ has exactly $n$ many $\omega $-orbits. If $\mathcal{A}$ has infinitely many $\omega $-orbits, then for every $n \geq 1 ,$ $\mathcal{A} \vDash \sigma _{n}$ and hence $\mathcal{B} \vDash \sigma _{n}$, so $\mathcal{B}$ has infinitely many $\omega $-orbits. The last conclusion also follows from the fact that $\mathcal{A}$ can be embedded into $\mathcal{B}$. \medskip

(c) Since the character of $\mathcal{A}$ is bounded, let $k \in \omega  -\{0\}$ be the largest size of a finite orbit of $\mathcal{A}$.

Since $\mathcal{A}$ has no infinite orbits, $\mathcal{A}$ satisfies the following $\Pi _{1}^{0}$ sentence, saying that there are no orbits of size $k +1$:\medskip 

$ \lnot ( \exists x)[\bigwedge _{1 \leq i \leq k +1}f^{i}(x) \neq x]$ \medskip

\noindent Thus, $\mathcal{B}$ satisfies the same sentence and, since $\mathcal{B}$ has the same character as $\mathcal{A} ,$ it is isomorphic to $\mathcal{A}$. 
\end{proof}

\begin{theorem}
Let $\ \mathcal{A}$ be a computable injection structure with unbounded character. Let $C$ be a cohesive set, and $\mathcal{B} =\Pi _{C}\mathcal{A} .$ Then $\mathcal{B}$  has infinitely many $Z$-orbits. 

Hence if $\mathcal{A}$ has unbounded character and finitely many $Z$-orbits, then $A \ncong \Pi _{C}\mathcal{A} .$
\end{theorem}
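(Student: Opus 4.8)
The plan is to mimic the construction used in the previous theorem about unbounded-character equivalence structures, but now producing infinite orbits of type $Z$ rather than infinite equivalence classes. The key structural fact I would exploit is that an unbounded character gives me a computable sequence of finite orbits whose sizes tend to infinity, and I can use these to build partial computable functions in the cohesive power whose orbit, after passing to the cohesive set $C$, is forced to be infinite and, crucially, to be a $Z$-orbit rather than an $\omega$-orbit.

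First I would, without loss of generality, take $A = \omega$. Since $\chi(\mathcal{A})$ is a $\Sigma_1^0$ set (fact (f) above) and the character is unbounded, I can computably enumerate a sequence of orbit representatives $a(0), a(1), a(2), \ldots$ lying in pairwise distinct orbits such that $\operatorname{card}(\mathcal{O}_f(a(k))) > k$; such a sequence is obtained by searching, for each $k$, for a fresh element whose orbit size exceeds $k$, which is effective because $\{(a,k) : \operatorname{card}(\mathcal{O}_f(a)) \geq k\}$ is $\Sigma_1^0$. Using the pairing $\langle m, i\rangle$, I would regard $a(\langle m, i\rangle)$ as the representative governing the $i$th coordinate of the $m$th orbit I am building. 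For each fixed $m$ and each integer $n$ (now allowing $n$ to range over $\mathbb{Z}$, not just $n \geq 1$), I would define a partial computable function $\psi_{m,n}$ so that $\psi_{m,n}(i)$ is an element of $\mathcal{O}_f(a(\langle m,i\rangle))$ sitting at signed position $n$ relative to a chosen basepoint in that orbit, defined only once the orbit is large enough that this signed position exists. Because $\operatorname{card}(\mathcal{O}_f(a(\langle m,i\rangle))) > \langle m,i\rangle \geq i$, each $\psi_{m,n}$ is defined for all but finitely many $i$, so $C \subseteq^{*} \operatorname{dom}(\psi_{m,n})$ and hence $[\psi_{m,n}] \in B$.

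Next I would verify the orbit structure in $\mathcal{B}$. For fixed $m$, the relation $f^{\mathcal{B}}([\psi_{m,n}]) = [\psi_{m,n+1}]$ should hold by construction, since coordinatewise $f(\psi_{m,n}(i)) = \psi_{m,n+1}(i)$ for all sufficiently large $i$ where the orbit is big enough; cohesiveness upgrades ``all sufficiently large $i$'' to ``$C \subseteq^{*}$''. The elements $[\psi_{m,n}]$ for $n \in \mathbb{Z}$ are pairwise distinct, again because the underlying orbits eventually exceed any fixed finite size, so distinct signed positions give distinct elements on $C$. This shows the orbit of $[\psi_{m,0}]$ in $\mathcal{B}$ is a bi-infinite chain, i.e. a $Z$-orbit: every $[\psi_{m,n}]$ has an $f^{\mathcal{B}}$-preimage $[\psi_{m,n-1}]$, so the whole orbit lies in $\operatorname{ran}(f^{\mathcal{B}})$. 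Finally, for $m_1 \neq m_2$ the elements $[\psi_{m_1,n_1}]$ and $[\psi_{m_2,n_2}]$ lie in different orbits, because the values come from distinct orbits of $\mathcal{A}$ on a cofinite-in-$C$ set, so no finite power of $f^{\mathcal{B}}$ connects them; this gives infinitely many distinct $Z$-orbits. The concluding sentence of the theorem follows immediately: if $\mathcal{A}$ had only finitely many $Z$-orbits, then $\mathcal{B}$ and $\mathcal{A}$ differ in the number of $Z$-orbits and hence cannot be isomorphic.

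The main obstacle I anticipate is the bookkeeping that guarantees each $[\psi_{m,n}]$ genuinely has a predecessor in the cohesive power, i.e. that the orbit is of type $Z$ and not merely $\omega$. This is where the freedom to index $n$ over all of $\mathbb{Z}$ matters: I must ensure that the chosen basepoint in each finite orbit $\mathcal{O}_f(a(\langle m,i\rangle))$ is placed so that, as the orbit size grows with $i$, there is room in both directions, so that $f^{-1}$ is eventually defined at each coordinate. Concretely, I would define $\psi_{m,0}(i)$ to be roughly the ``middle'' element of the finite orbit and let $\psi_{m,n}(i) = f^{n}(\psi_{m,0}(i))$ (interpreting negative $n$ via the partial inverse $f^{-n}$), so that for each fixed $n$ the value is defined once the orbit is large enough to extend $|n|$ steps in the required direction. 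Verifying that this placement is computable and that each $\psi_{m,n}$ is total on a cofinite subset of $C$ is the one genuinely delicate point; everything else is the same cohesiveness-and-character argument already used in the equivalence-structure case.
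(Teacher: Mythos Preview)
Your proposal is correct and follows essentially the same construction as the paper. One simplification you are missing: in an injection structure every finite orbit is a \emph{cycle} (recall $card(\mathcal{O}_f(a))=k$ iff $f^k(a)=a$), so $f$ restricted to any finite orbit is a bijection and $f^{z}$ is total there for every $z\in\mathbb{Z}$; the paper therefore just sets $\psi_{m,z}(i)=f^{z}(a(\langle m,i\rangle))$ as a total computable function, and your anticipated ``middle placement'' bookkeeping is unnecessary.
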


\begin{proof}
Without loss of generality, we will assume that $A =\omega  .$ Choose a computable sequence of elements in $A :\medskip $

$a(0) ,a(1) ,a(2) ,a(3) ,\ldots $ \medskip

\noindent such that they all belong to distinct finite orbits, and for every $k ,$ the orbit of $a(k)$ has $ >k$ elements. Since $f$ is computable and the character of $\mathcal{A}$ is unbounded, such a sequence can be obtained by simultaneously enumerating the elements of $A$ and checking the conditions.

For every natural number $m$ and an integer $z \in \mathbb{Z} ,$ we will define a computable function $\psi _{m ,z}$  as follows:\bigskip

$\psi _{m ,z}(i) =f^{z}(a( \langle m ,i \rangle )) .$\bigskip

Each $[\psi _{m ,z}]$ is in
$B .$ Fix $m$. Let integers $z_{1} ,z_{2}$ be such that $z_{1} \neq z_{2}$. Starting with some $i_{0} ,$ the orbit of $a( \langle m ,i \rangle )$  will be so large that for $i \geq i_{0}$, we will have \medskip

 $f^{z_{1}}(a( \langle m ,i \rangle )) \neq f^{z_{2}}(a( \langle m ,i \rangle ))$, \medskip  

\noindent hence $[\psi _{m ,z_{1}}] \neq [\psi _{m ,z_{2}}]$. In addition,  $[\psi _{m ,z_{1}}]$ and $[\psi _{m ,z_{2}}]$ belong to the same orbit in $\mathcal{B}$ since $f^{z_{2} -z_{1}}([\psi _{m ,z_{1}}]) =[\psi _{m ,z_{2}}]$. Hence the set $\{[\psi _{m ,z}] :z \in \mathbb{Z}\}$ forms a $Z$-orbit. On the other hand, if $m_{1} \neq m_{2} ,$ then for every $i ,$ $ \langle m_{1} ,i \rangle  \neq  \langle m_{2} ,i \rangle $, so $a( \langle m_{1} ,i \rangle )$ and $a( \langle m_{2} ,i \rangle )$ belong to different orbits. Hence $[\psi _{m_{1} ,0}]$ and $[\psi _{m_{2} ,0}]$ belong to different orbits in $\mathcal{B} ,$ so there are infinitely many $Z$-orbits.  
\end{proof}

\begin{theorem}
Let $\ \mathcal{A}$ be a computable injection structure with an infinite orbit. Let $C$ be a cohesive set, and $\mathcal{B} =\Pi _{C}\mathcal{A} .$ Then $\mathcal{B}$ has infinitely many $Z$-orbits.

Hence if $\mathcal{A}$ has an infinite orbit, but has at most finitely many $Z$-orbits, then $\mathcal{A} \ncong \Pi _{C}\mathcal{A} .$
\end{theorem}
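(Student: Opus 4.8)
The plan is to manufacture the infinitely many $Z$-orbits of $\mathcal{B}$ out of a single infinite orbit of $\mathcal{A}$, splitting it by means of functions that run forward along the orbit at different rates. Assume $A=\omega$. Since $\mathcal{A}$ has an infinite orbit, fix once and for all a point $a\in A$ with $\mathcal{O}_f(a)$ infinite; we need not produce $a$ effectively (the set of such points is only $\Pi_1^0$), only use its existence, treating $a$ as a fixed numeral. The key observation is that applying \emph{positive} powers of $f$ to a point of an infinite orbit is always defined, uniformly whether the orbit has type $\omega$ or $Z$. Hence for each $m\in\omega$ the function
\[
\psi_m(i)=f^{(m+1)i}(a)
\]
is total computable and $[\psi_m]\in B$. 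I will show that each $[\psi_m]$ generates a $Z$-orbit of $\mathcal{B}$ and that distinct $m$ give distinct orbits.

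For the first claim, fix $m$ and for $n\geq 0$ put $\psi_m^{(-n)}(i)=f^{(m+1)i-n}(a)$, a partial computable function defined exactly when $(m+1)i\geq n$, hence on a cofinite set, so $C\subseteq^{\ast}dom(\psi_m^{(-n)})$ and $[\psi_m^{(-n)}]\in B$. Since $f(\psi_m^{(-n-1)}(i))=\psi_m^{(-n)}(i)$ for all large $i$, we get $f^{\mathcal{B}}([\psi_m^{(-n-1)}])=[\psi_m^{(-n)}]$, so $[\psi_m]$ has predecessors of every order in $\mathcal{B}$. These predecessors are pairwise distinct: for $n_1\neq n_2$ and all large $i$, the exponents $(m+1)i-n_1$ and $(m+1)i-n_2$ are distinct nonnegative integers, so by injectivity along the infinite orbit $\mathcal{O}_f(a)$ the values $\psi_m^{(-n_1)}(i)$ and $\psi_m^{(-n_2)}(i)$ differ, whence $[\psi_m^{(-n_1)}]\neq[\psi_m^{(-n_2)}]$. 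Thus the orbit of $[\psi_m]$ is infinite and has no minimal element, i.e.\ it is a $Z$-orbit.

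For the second claim, suppose toward a contradiction that $[\psi_{m_1}]$ and $[\psi_{m_2}]$ with $m_1\neq m_2$ lie in the same orbit of $\mathcal{B}$. As this is a $Z$-orbit, $(f^{\mathcal{B}})^{t}([\psi_{m_1}])=[\psi_{m_2}]$ for some fixed $t\in\mathbb{Z}$, which forces
\[
C\subseteq^{\ast}\{\,i:(m_2+1)i=(m_1+1)i+t\,\}=\{\,i:(m_2-m_1)i=t\,\}.
\]
But the right-hand set contains at most one $i$, contradicting the infinitude of $C$. Hence the orbits of $[\psi_0],[\psi_1],[\psi_2],\ldots$ are pairwise distinct $Z$-orbits, and $\mathcal{B}$ has infinitely many $Z$-orbits.

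The displayed consequence is then immediate: the number of $Z$-orbits is an isomorphism invariant of injection structures, so if $\mathcal{A}$ has only finitely many $Z$-orbits while $\mathcal{B}=\Pi_C\mathcal{A}$ has infinitely many, then $\mathcal{A}\ncong\Pi_C\mathcal{A}$. The main obstacle, and the point that distinguishes this from the unbounded-character theorem, is that there the separation into distinct $\mathcal{B}$-orbits was inherited for free from distinct \emph{finite} orbits of $\mathcal{A}$, whereas here a single infinite orbit must be resolved into infinitely many $Z$-orbits. The separation must therefore be engineered through the diverging displacements $(m+1)i$, and cohesiveness of $C$ is exactly what rules out any fixed shift $t$ linking two of the resulting classes; the subtlety to handle carefully is that the $\psi_m$ are defined using only forward iterates of $f$ (keeping them total), while the backward direction needed to certify the $Z$-type is supplied at the level of $\mathcal{B}$ by the growth $(m+1)i\to\infty$.
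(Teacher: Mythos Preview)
Your proof is correct and follows essentially the same approach as the paper: fix a point $a$ in an infinite orbit and build representatives whose exponents grow linearly in $i$ at pairwise distinct rates, so that any fixed shift $t$ can equalize them on at most one $i$. The paper uses $\psi_{m,z}(n)=f^{m(2n+1)+n-1+z}(a)$ (rate $2m+1$, with the $z$-parameter giving the whole $Z$-orbit at once), while you use rate $m+1$ and construct the predecessors separately; these are cosmetic differences. One minor wording issue: your $\psi_m^{(-n)}$ is defined \emph{at least} when $(m+1)i\geq n$, not ``exactly'' (if the orbit has type $Z$, or $a$ has predecessors in an $\omega$-orbit, it may be defined for smaller $i$ too), but this does not affect the argument since you only need cofinite domain.
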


\begin{proof}
  Assume that $\mathcal{A}$  has an infinite orbit. Let $a$ be an element of such an infinite orbit. For every natural number $m$ and every integer $z$, we define a partial computable function $\psi _{m ,z}$ by $\psi _{m .z}(n) =f^{m(2n +1) +n -1 +z}(a)$. Since the domain of each $\psi $ is co-finite, we have that $[\psi _{m ,z}] \in B$. For any $m$ and $z_{1} ,z_{2}$ such that $z_{1} \neq z_{2} ,$ we have that $\{n :$ $\psi _{m ,z_{1}}(n)\downarrow  =\psi _{m ,z_{2}}(n)\downarrow \} = \varnothing $ since the orbit of $a$ is infinite; hence $[\psi _{m ,z_{1}}] \neq [\psi _{m ,z_{2}}]$. We also have that $f([\psi _{m ,z}]) =[f \circ \psi _{m ,z}] =[\psi _{m ,z +1}]$. Hence for every $m$ we have a $Z$-orbit $\{[\psi _{m ,z}] :z \in \mathbb{Z}\} =\mathcal{O}([\psi _{m ,0}]) .$ 

Now, assume that $m_{1} \neq m_{2}$. Since for any $k \in \omega $, we have $f^{k}([\psi _{m_{1} ,0}]) \neq [\psi _{m_{2} ,0}]$, it follows that $\mathcal{O}([\psi _{m_{1} ,0}]) \neq \mathcal{O}([\psi _{m_{2} ,0}])$. Hence $\mathcal{B}$ has infinitely many $Z$-orbits. 
\end{proof}

\begin{corollary}
Let $\ \mathcal{A} =(A ,f)$ be a computable injection structure. Let $C$ be a cohesive set. Then $\Pi _{C}\mathcal{A} \cong \mathcal{A}$ iff $\mathcal{A}$ has a bounded character and no infinite orbits, or $\mathcal{A}$ has infinitely many $Z$-orbits.
\end{corollary}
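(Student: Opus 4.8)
The plan is to reduce the isomorphism question to a single invariant, namely the number of $Z$-orbits, and then read off both directions from the results already established. Recall that an injection structure is determined up to isomorphism by its character (the number of finite orbits of each size $k$), its number of $\omega$-orbits, and its number of $Z$-orbits. By Proposition \ref{basicinj}(a) the structures $\mathcal{A}$ and $\mathcal{B}=\Pi_{C}\mathcal{A}$ always have the same character, and by Proposition \ref{basicinj}(b) they always have the same number of $\omega$-orbits. Hence $\mathcal{A}\cong\mathcal{B}$ holds if and only if $\mathcal{A}$ and $\mathcal{B}$ have the same number of $Z$-orbits, and the entire corollary reduces to deciding when this last count is preserved.

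For the direction $(\Leftarrow)$ I would split into the two disjuncts. If $\mathcal{A}$ has bounded character and no infinite orbits, then $\mathcal{A}\cong\mathcal{B}$ is exactly Proposition \ref{basicinj}(c), so nothing further is needed. If instead $\mathcal{A}$ has infinitely many $Z$-orbits, I use the canonical embedding $d$ of $\mathcal{A}$ into $\mathcal{B}$: an element $a$ lying in a $Z$-orbit satisfies $f^{-n}(a)\downarrow$ for every $n$, so $[c_{a}]$ has a preimage $(f^{\mathcal{B}})^{-n}([c_{a}])=[c_{f^{-n}(a)}]$ of every order and therefore lies in a $Z$-orbit of $\mathcal{B}$; moreover, if $a$ and $b$ lie in distinct orbits of $\mathcal{A}$, then $[c_{a}]$ and $[c_{b}]$ lie in distinct orbits of $\mathcal{B}$, since $(f^{\mathcal{B}})^{n}([c_{a}])=[c_{b}]$ would force $f^{n}(a)=b$. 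Thus the infinitely many $Z$-orbits of $\mathcal{A}$ map to infinitely many distinct $Z$-orbits of $\mathcal{B}$, so $\mathcal{B}$ also has infinitely many $Z$-orbits; combined with the agreement of character and $\omega$-orbit count, this yields $\mathcal{A}\cong\mathcal{B}$.

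For the direction $(\Rightarrow)$ I would argue by contraposition: assume $\mathcal{A}$ satisfies neither disjunct, that is, $\mathcal{A}$ has finitely many $Z$-orbits and does not simultaneously have bounded character and no infinite orbits. The latter means $\mathcal{A}$ has either unbounded character or an infinite orbit. If $\mathcal{A}$ has unbounded character (and finitely many $Z$-orbits), the preceding theorem on injection structures of unbounded character already gives $\mathcal{A}\ncong\Pi_{C}\mathcal{A}$. If $\mathcal{A}$ has an infinite orbit (and finitely many $Z$-orbits), the preceding theorem on injection structures with an infinite orbit gives $\mathcal{A}\ncong\Pi_{C}\mathcal{A}$. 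Since at least one disjunct holds, $\mathcal{A}\ncong\mathcal{B}$ in either case, completing the contrapositive.

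The substance of the argument is really bookkeeping: the two preceding theorems supply both ``$\mathcal{B}$ has infinitely many $Z$-orbits'' facts, while Proposition \ref{basicinj} supplies the invariance of the other two isomorphism invariants. The only step demanding genuine care is the verification in case $(\Leftarrow)$ that the canonical embedding actually preserves $Z$-orbits and separates distinct orbits, so that the property ``infinitely many $Z$-orbits'' transfers from $\mathcal{A}$ to $\mathcal{B}$; once that is settled, everything else follows by matching the three invariants.
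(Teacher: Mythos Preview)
Your proof is correct and follows essentially the route the paper intends: the corollary is stated there without proof, as an immediate consequence of Proposition~\ref{basicinj} together with the two preceding theorems, and your argument is precisely the bookkeeping that makes this explicit. One minor simplification: in the $(\Leftarrow)$ case where $\mathcal{A}$ has infinitely many $Z$-orbits, you need not invoke the canonical embedding at all, since $\mathcal{A}$ then has an infinite orbit and the second of the two preceding theorems already gives directly that $\mathcal{B}$ has infinitely many $Z$-orbits.
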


It was shown in \cite{CHR} that the following model-theoretic result holds for the injection structures. Let the formulas $\gamma _{k}(x)$ state that $( \exists y)[f^{k}(y) =x]$. Then in the language of injection structures $\{f\}$ expanded with unary predicates $\{\gamma _{k} :k \geq 1\}$ we have quantifier elimination; i.e., every first-order formula in the original language is logically equivalent to a quantifier-free formula in the new language. Since the formulas $\gamma _{k}(x)$ are $\Sigma _{1}^{0}$ formulas, it follows that a computable injection structure $\mathcal{A}$ and its cohesive powers satisfy the same first-order sentences. However, in some cases, the distinction can be made by using computable (infinitary) sentences.

\begin{corollary}
If $\mathcal{A}$ is a computable injection structure with an unbounded character and no infinite orbits, then there is a computable (infinitary) $\Sigma _{2}$ sentence $\alpha $ such that $\mathcal{B} \vDash \alpha $ and $\mathcal{A} \nvDash \alpha $.
\end{corollary}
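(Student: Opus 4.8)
The plan is to separate $\mathcal{A}$ from $\mathcal{B}=\Pi_C\mathcal{A}$ by a single computable $\Sigma_2$ sentence asserting the existence of an infinite orbit. The structural input is the theorem proved just above: since $\mathcal{A}$ has unbounded character, $\mathcal{B}$ has infinitely many $Z$-orbits, and in particular at least one infinite orbit, whereas by hypothesis $\mathcal{A}$ has none. Thus the property ``some element has an infinite orbit'' distinguishes the two structures, and the only remaining task is to express this property at the correct level of the computable infinitary hierarchy.

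First I would note that, because $f$ is injective, the orbit of $x$ is infinite if and only if $f^n(x)\neq x$ for every $n\geq 1$: if $f^i(x)=f^j(x)$ with $i<j$, then injectivity forces $f^{j-i}(x)=x$, so the forward orbit is either eventually periodic (finite) or has pairwise distinct terms (infinite). Hence the quantifier-free conditions $f^n(x)\neq x$, ranging over $n\geq 1$, exactly capture infinitude of $\mathcal{O}_f(x)$. The conjunction $\bigwedge_{n\geq 1} f^n(x)\neq x$ is a computably enumerable conjunction of quantifier-free formulas, so it is a computable $\Pi_1$ formula.

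I would then take
\[
\alpha\ :=\ \exists x\,\bigwedge_{n\geq 1} f^n(x)\neq x .
\]
Applying a single existential quantifier to the $\Pi_1$ matrix yields a computable $\Sigma_2$ sentence. Since $\alpha$ says exactly that some element has an infinite orbit, the facts recalled in the first paragraph give $\mathcal{B}\vDash\alpha$ and $\mathcal{A}\nvDash\alpha$, completing the argument.

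The one point requiring care is the complexity bookkeeping, specifically why $\Sigma_2$ suffices here while the analogous equivalence-structure corollary needed $\Sigma_3$. The gain comes from the functional nature of injection structures: the predicate ``$\mathcal{O}_f(x)$ has at least $n$ elements'' is quantifier-free, being $\bigwedge_{1\leq t<n} f^t(x)\neq x$ expressed through the terms $f^t$, so no inner existential witnesses $y_1,\dots,y_n$ are needed and the matrix stays $\Pi_1$ rather than climbing to $\Sigma_2$. This is the step at which I would verify most carefully that each $f^n(x)\neq x$ is genuinely atomic and that the resulting infinite conjunction under one existential quantifier lands precisely in computable $\Sigma_2$.
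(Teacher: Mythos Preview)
Your proof is correct and follows exactly the paper's approach: the paper's proof consists solely of exhibiting the sentence $\exists x\,\bigwedge_{k}\,f^{k}(x)\neq x$ as the desired $\alpha$. Your write-up is more thorough in justifying the complexity and in correctly restricting the conjunction to $n\geq 1$ (the paper writes $k\in\omega$, which taken literally would include the trivially false conjunct $f^{0}(x)\neq x$).
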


\begin{proof}
 Let $\alpha $ say that there is an infinite orbit: \bigskip

$ \exists x\bigwedge _{k \in \omega }[f^{(k)}(x) \neq x]$.  
\end{proof}

\section{Two-to-one structure}
We will now investigate cohesive powers of two-to-one structures that were introduced and studied in \cite{CHR1} from the computability-theoretic point of view with focus on the complexity of isomorphisms
between these structures.

\begin{definition}
 A \emph{two-to-one structure} $\mathcal{A} =(A ,f)$ consists of a non-empty domain $A$ with a single unary function $f :A \rightarrow A$ such that for every $a \in A$ we have $card(f^{ -1}(a)) =2$.
\end{definition}

We will also call a two-to-one structure a 2:1 structure, and often identify it with its directed graph $G_{\mathcal{A}}$ with vertex set $A$ and edges $(a ,f(a))$ for $a \in A$.

\begin{definition}
For $a \in A ,$ the orbit of $a$ is:\bigskip

$\mathcal{O}_{f}(a) =\{x \in A :( \exists n ,m \in \omega )\left .[f^{n}(x) =f^{m}(a)]\right .\} .$

\end{definition}

\noindent That is, the orbit of $a$ is the set of elements of $A$, which belong to the same connected component of $G_{\mathcal{A}}$ to which $a$ belongs.  

Let $B$ be a full binary tree with its nodes pointing toward the root. We can show that there are two types of orbits in a 2:1 structure: $Z$-chains, and $k$-cycles for $k \geq 1$. A $Z$-chain consists of a directed one-to-one basic sequence of nodes ordered as integers, with a tree $B$ attached to every node of the basic sequence as follows: there is a connecting edge, which is not part of the basic sequence but points toward the sequence, to which the root of $B$ is attached. A $k$-cycle is a directed one-to-one cycle of size $k$ such that a tree $B$ is attached to each node of the cycle via a connecting edge, which is not part of the cycle but points toward the cycle, to which the root of $B$ is attached. Hence all tree edges point toward the cycle. For pictures illustrating orbits see Section 1 in \cite{CHR1}.

\begin{lemma}
\cite{CHR1} (i) The predicate ``$O_{f}(a)$ is a $k$-cycle'' is $\Sigma _{1}^{0} .$ \medskip

(ii) The predicate ``$O_{f}(a)$ is a $Z$-chain'' is $\Pi _{1}^{0}$.
\end{lemma}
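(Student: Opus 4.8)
The plan is to reduce both predicates to a single semidecidable condition on the forward trajectory of $a$, using the structural dichotomy recalled just before the lemma: every orbit of a two-to-one structure is either a $k$-cycle or a $Z$-chain, and these two possibilities are mutually exclusive and exhaustive. The key point is that the forward motion $a, f(a), f^2(a), \ldots$ is \emph{deterministic}, since $f$ is a function, so the behavior of this one sequence already decides the orbit type of $a$. All tree edges point toward the cycle (in a $k$-cycle) or toward the bi-infinite basic sequence (in a $Z$-chain), so iterating $f$ pushes $a$ monotonically toward the ``core'' of its orbit.

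First I would prove the combinatorial heart of the statement: for every $a \in A$, the orbit $\mathcal{O}_f(a)$ is a $k$-cycle for some $k \geq 1$ if and only if there exist $m, n \in \omega$ with $m < n$ and $f^m(a) = f^n(a)$. For the forward direction, if $\mathcal{O}_f(a)$ is a $k$-cycle then, because every tree node points toward its root and the root is attached to the cycle, the trajectory $f^t(a)$ reaches the cycle after finitely many steps and thereafter repeats with period $k$, so some value recurs. For the reverse direction, a repetition $f^m(a) = f^n(a)$ with $m < n$ gives $f^{\,n-m}(f^m(a)) = f^m(a)$, exhibiting a cycle inside $\mathcal{O}_f(a)$; by the dichotomy the orbit is then a $k$-cycle and not a $Z$-chain. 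The one point that needs care — and which I expect to be the main obstacle — is the contrapositive: I must verify that in a $Z$-chain the forward trajectory of every element is injective. This follows by tracking the position of $f^t(a)$: if $a$ lies in a tree $B$ hanging off the basic sequence, its depth strictly decreases under $f$ until the trajectory reaches the basic sequence, after which the index along the sequence strictly increases forever; in neither phase can a value recur, so no repetition occurs.

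Granting the claim, both complexity bounds follow at once. Since $f$ is a total computable function on the computable domain $A$, the iterate $f^n(a)$ is uniformly computable in $n$ and $a$, so the matrix ``$f^m(a) = f^n(a)$'' is a computable relation. Part (i) then reads directly off the characterization: ``$\mathcal{O}_f(a)$ is a $k$-cycle'' is expressed by $(\exists m)(\exists n)[\,m < n \,\wedge\, f^m(a) = f^n(a)\,]$, a $\Sigma_1^0$ predicate. Part (ii) is its negation within the dichotomy: ``$\mathcal{O}_f(a)$ is a $Z$-chain'' holds exactly when $\mathcal{O}_f(a)$ is not a $k$-cycle, i.e. $(\forall m)(\forall n)[\,m < n \Rightarrow f^m(a) \neq f^n(a)\,]$, which is $\Pi_1^0$. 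Thus the two predicates are complementary, matching the $\Sigma_1^0$/$\Pi_1^0$ duality asserted in the lemma.
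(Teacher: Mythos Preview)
The paper does not supply its own proof of this lemma---it is simply quoted from \cite{CHR1}---so there is no in-paper argument to compare against. Your proof is correct and is the natural one: the structural dichotomy between $k$-cycles and $Z$-chains reduces both parts to the single semidecidable condition $(\exists m<n)[f^m(a)=f^n(a)]$ on the forward trajectory, and your verification that this trajectory is injective in a $Z$-chain (depth strictly decreases while in an attached tree, then the index along the basic sequence strictly increases) is exactly the point that needs checking.

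One small remark: the lemma as phrased may intend $k$ as a parameter of the predicate (i.e., the set $\{(a,k):\mathcal{O}_f(a)$ is a $k$-cycle$\}$), rather than ``a $k$-cycle for some $k$'' as you read it. Your argument adapts immediately: once a repetition $f^m(a)=f^n(a)$ is witnessed, the exact cycle length is computable as the least $k>0$ with $f^{m+k}(a)=f^m(a)$, so the predicate in $(a,k)$ is still $\Sigma_1^0$. Part (ii) is unaffected since $Z$-chains carry no length parameter.
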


Two countable 2:1 structures are isomorphic if they have the same number of $k$-cycles for every $k \geq 1$,  and the same number of $Z$-chains.\medskip

2:1 structures have the following axioms:\medskip

$ \forall y \exists x_{1} \exists x_{2}[x_{1} \neq x_{2} \wedge f(x_{1}) =y \wedge f(x_{2}) \Rightarrow y]$ \medskip

$ \forall x_{1} \forall x_{2} \forall x_{3}[(f(x_{1}) =f(x_{2}) \wedge f(x_{2}) =f(x_{3})) \Rightarrow (x_{1} =x_{2} \vee x_{1} =x_{3} \vee x_{2} =x_{3})]$ \medskip

\noindent Hence a cohesive power of a computable 2:1 structure is a 2:1 structure. We would like to determine the isomorphism types of these cohesive powers.

\begin{theorem}
\label{2-1Char}Let $\ \mathcal{A}$ be a computable \emph{$2 :1$} structure. Let $C$ be a cohesive set, and $\mathcal{B} =\Pi _{C}\mathcal{A} .$  \medskip  

(i) The cohesive power $\mathcal{B}$ has the same number of $k$-cycles, for any $k \geq 1$, as $\mathcal{A}$ does. \medskip

(ii) The cohesive power $\mathcal{B}$ has infinitely many $Z$-chains.\medskip  

Hence if $\mathcal{A}$ has at most finitely many $Z$-chains, then $\mathcal{A} \ncong \Pi _{C}\mathcal{A} .$
\end{theorem}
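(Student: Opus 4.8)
The plan is to handle the two parts separately, modeling the argument on the injection-structure theorems already proved. For part (i), the key observation is that the predicate ``$\mathcal{O}_f(a)$ is a $k$-cycle'' is $\Sigma_1^0$ and its negation (within the class of finite cycles of bounded size) is controlled by first-order sentences. Concretely, for fixed $k$ and $n$, the statement ``there are at least $n$ many $k$-cycles'' should be expressible by a $\Sigma_2^0$ sentence $\tau_{k,n}$, and ``there are fewer than $n$ many $k$-cycles'' by its negation $\lnot\tau_{k,n}$, a $\Pi_2^0$ sentence. To write $\tau_{k,n}$ I would assert the existence of $n$ elements $x_1,\dots,x_n$, each lying on a cycle of size exactly $k$ (i.e. $f^k(x_\ell)=x_\ell$ and $f^t(x_\ell)\neq x_\ell$ for $0<t<k$), pairwise on distinct cycles, together with a universal clause forcing every other $k$-cycle element to coincide with one of the listed cycles. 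Since both $\tau_{k,n}$ and its negation are among the sentences preserved in both directions by Theorem \ref{rumen}(1), $\mathcal{A}$ and $\mathcal{B}$ agree on the exact count (finite or infinite) of $k$-cycles for each $k$, which is part (i).

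For part (ii), the plan is to mimic the construction in the preceding injection-structure theorems by exhibiting infinitely many distinct $Z$-chains in $\mathcal{B}$ directly. A $2{:}1$ structure always contains an infinite ``backbone'' on which $f$ acts injectively along a bi-infinite or forward-infinite sequence, so I would first locate, computably, an element $a$ whose forward orbit $a, f(a), f^2(a),\dots$ is infinite (such $a$ exists because $A$ is infinite and each fiber is finite, so no finite set can be $f$-closed). Using $a$ I would define, for each $m\in\omega$ and each $z\in\mathbb{Z}$, a computable function $\psi_{m,z}(n) = f^{\,e(m,n)+z}(a)$ where $e(m,n)$ is an increasing integer-valued schedule chosen exactly as in the infinite-orbit injection theorem (e.g. $e(m,n)=m(2n+1)+n-1$) so that for fixed $m$ the shifts are genuinely distinct along the infinite backbone and for distinct $m_1\neq m_2$ the resulting orbits are disjoint. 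The verification that $[\psi_{m,z_1}]\neq[\psi_{m,z_2}]$ when $z_1\neq z_2$, that $f([\psi_{m,z}])=[\psi_{m,z+1}]$, and that distinct $m$ give non-equivalent orbits is then formally identical to the injection case, and each orbit $\{[\psi_{m,z}] : z\in\mathbb{Z}\}$ is a bi-infinite $f$-chain, hence the backbone of a $Z$-chain rather than a cycle (no element equals a later $f$-iterate of itself, since the backbone is infinite). This yields infinitely many $Z$-chains in $\mathcal{B}$.

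The final ``Hence'' clause is then immediate: if $\mathcal{A}$ has only finitely many $Z$-chains while $\mathcal{B}$ has infinitely many, the isomorphism invariant (number of $Z$-chains) differs, so $\mathcal{A}\ncong\Pi_C\mathcal{A}$.

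I expect the main obstacle to be part (ii), specifically confirming that each constructed orbit is genuinely a $Z$-chain and not merely a forward $\omega$-type chain or a cycle. The issue is that in a $2{:}1$ structure every element has two preimages, so each $[\psi_{m,z}]$ automatically has predecessors in $\mathcal{B}$; I must argue that along the chosen backbone these lift to a bi-infinite sequence with the correct tree structure attached, i.e. that the orbit in $\mathcal{B}$ really has the $Z$-chain shape (integer-indexed backbone with a copy of the binary tree $B$ hanging off each node) rather than looping back. This amounts to checking that the infinite forward backbone in $\mathcal{A}$ forces, via \L o\'{s}-type preservation of the relevant $\Pi_1^0$ ``no cycle'' condition from Lemma \ref{2-1Char} (the $Z$-chain predicate is $\Pi_1^0$), that $[\psi_{m,0}]$ satisfies $f^k([\psi_{m,0}])\neq[\psi_{m,0}]$ for all $k$, so it cannot lie on a cycle and must lie on a $Z$-chain.
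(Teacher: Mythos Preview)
Your treatment of part (i) is essentially the paper's, though you overcomplicate it: ``at least $n$ many $k$-cycles'' is already a $\Sigma_1^0$ (purely existential) sentence, so no universal clause is needed, and Theorem~\ref{rumen}(1) then transfers both it and its negation.

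Part (ii), however, has a genuine gap. Your construction rests on finding an element $a\in A$ whose forward orbit $\{a,f(a),f^2(a),\dots\}$ is infinite, and your justification is that ``no finite set can be $f$-closed.'' That claim is false: the backbone of any $k$-cycle is a finite set $S$ with $f(S)=S$. Concretely, if $\mathcal{A}$ consists of a single $1$-cycle orbit (one fixed point $c=f(c)$ with a full binary tree hanging off it), then every element reaches $c$ after finitely many forward applications of $f$ and stays there, so \emph{no} element has an infinite forward orbit. Your scheme $\psi_{m,z}(n)=f^{e(m,n)+z}(a)$ then collapses: for large $n$ the values $f^{e(m,n)+z}(a)$ are all equal to $c$, and the resulting $[\psi_{m,z}]$ all coincide with the constant class $[c]$, which lies on a $1$-cycle, not a $Z$-chain. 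This is exactly the case (finitely many, indeed zero, $Z$-chains in $\mathcal{A}$) that the ``Hence'' clause is meant to cover.

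What is always infinite in a $2{:}1$ structure is the \emph{backward} direction: every element has two preimages, so a full binary tree hangs off each node. The paper exploits this. It fixes a convention $f^{-1}(a)=$ the smaller of the two preimages of $a$, so that $f^{-z}$ is total and computable for $z\geq 0$, and then chooses a computable sequence $g(n)$ of points spaced so that the iterates $f^{z}(g(n))$ for $|z|\leq n$ are pairwise distinct (this is achieved by going deeper and deeper into a single binary tree). The functions $\psi_{m,z}(x)=f^{z}(g(\langle m,x\rangle))$ for $|z|\leq\langle m,x\rangle$ then yield genuine $Z$-chains in $\mathcal{B}$: the spacing guarantees $f^{k}([\psi_{m,0}])\neq[\psi_{m,0}]$ for every $k>0$, regardless of whether the forward orbit of $g(n)$ in $\mathcal{A}$ eventually hits a cycle. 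Your instinct that ``each fiber is finite'' is the relevant structural fact was correct, but it points to an infinite backward path (K\"onig's lemma on the preimage tree), not a forward one.
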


\begin{proof}
 (i) The property that a 2:1 structure has at least $n$ many $k$-cycles, where $n ,k \geq 1$ can be expressed by an existential sentence $\theta _{n ,k}$: \bigskip    

$\noindent ( \exists x_{1}) \cdots ( \exists x_{n})[\bigwedge _{1 \leq m \leq n}(f^{k}(x_{m}) =x_{m} \wedge (\bigwedge _{1 \leq l <k}f^{l}(x_{m}) \neq x_{m}))$\bigskip 

$ \wedge \bigwedge _{(1 \leq i <j \leq n) \&(1 \leq l <k)}f^l(x_{i}) \neq x_{j}]$.\bigskip 

\noindent Hence both $\mathcal{A}$ and its cohesive power $\mathcal{B}$ satisfy the same such sentences, so they have the same number of $k$-cycles.\medskip

(ii)\  Fix a natural ordering on the domain $A .$ We will ``abuse'' the notation and by $f^{ -1}$ denote the unary function on $A ,$ which for every $a$ chooses the smaller of the two elements that $f$ maps into $a$. Hence $f^{ -z}$ will be defined for every integer $z$ where, as usual, $f^{0}(a) =a .$ 

Since $\mathcal{A}$ always contains a full binary tree component $\mathcal{T}$, we can define a computable function $g :\omega  \rightarrow A ,$ which chooses elements $g(n)$ on $\mathcal{T}$ that are spaced apart so that $f^{z}(g(n))$ where $\left \vert z\right \vert  \leq n$ do not ``interfere'' for different $n$'s.  More precisely, if $n_{1} \neq n_{2}$ or $z_{1} \neq z_{2} ,$ then $f^{z_{1}}(g(n_{1})) \neq f^{z_{2}}(g(n_{2}))$ where $\left \vert z_{1}\right \vert  \leq n_{1}$ and $\left \vert z_{2}\right \vert  \leq n_{2}$. Equivalently,\medskip  

$\left \vert f^{z}(g(n)) : -n \leq z \leq n \wedge 0 \leq n \leq m\}\right \vert  =(m +1)^{2} $.\medskip  

\noindent Our goal is to use this property to define partial computable functions $\psi _{m ,z}$ for natural numbers $m$ and integers $z$, such that $\psi _{m , \ast }$'s witness that there are infinitely many $Z$-chains.  

A partial function $\psi _{m ,z} :\omega  \rightarrow A$ is defined as follows:\bigskip

$\psi _{m ,z}(x) =\left \{\begin{array}{cc}f^{z}(g( \langle m ,x \rangle )) & \text{if }\left \vert z\right \vert  \leq  \langle m ,x \rangle  ; \\
\uparrow  & \text{otherwise.}\end{array}\right .$

\bigskip \noindent It follows that $[\psi _{m ,z}] \in B$ for every $m ,z$. Furthermore, $f([\psi _{m ,z}]) =[\psi _{m ,z +1}]$, so $\{[\psi _{m ,z}] :z \in \mathbb{Z}\}$ is a subset of a $Z$-chain. For  any pair of natural numbers $m_{1}$ and $m_{2}$ such that $m_{1} \neq m_{2}$ and arbitrary $k_{1} ,k_{2}$, we have $f^{k_{1}}([\psi _{m_{1} ,0}]) \neq f^{k_{2}}([\psi _{m_{2} ,0}])$, so $[\psi _{m_{1} ,0}]$ and $[\psi _{m_{2} ,0}]$ belong to different $Z$-chains. Hence $\mathcal{B}$ has infinitely many $Z$-chains.
\end{proof}

\begin{corollary}
Let $\ \mathcal{A}$ be a computable \emph{$2 :1$} structure. Let $C$ be a cohesive set.
Then $\mathcal{A} \cong \Pi _{C}\mathcal{A}$ iff $A$ has infinitely many $Z$-chains.

\end{corollary}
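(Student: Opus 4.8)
The plan is to derive the corollary directly from Theorem~\ref{2-1Char} together with the isomorphism criterion for countable $2:1$ structures stated just before that theorem, namely that two countable $2:1$ structures are isomorphic precisely when they agree on the number of $k$-cycles for every $k \geq 1$ and on the number of $Z$-chains. Since Theorem~\ref{2-1Char} already pins down both invariants for $\mathcal{B} = \Pi_C\mathcal{A}$ in terms of $\mathcal{A}$, the corollary reduces to a short bookkeeping argument, and I expect no substantive obstacle beyond matching the invariants.

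For the forward direction I would argue by contraposition, exactly as recorded in the closing ``Hence'' sentence of Theorem~\ref{2-1Char}. Suppose $\mathcal{A}$ has only finitely many $Z$-chains. Part (ii) of Theorem~\ref{2-1Char} guarantees that $\mathcal{B}$ has infinitely many $Z$-chains, so $\mathcal{A}$ and $\mathcal{B}$ differ in their number of $Z$-chains; by the isomorphism criterion they cannot be isomorphic. Taking the contrapositive, $\mathcal{A} \cong \mathcal{B}$ forces $\mathcal{A}$ to have infinitely many $Z$-chains.

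For the backward direction, assume $\mathcal{A}$ has infinitely many $Z$-chains. By part (ii) of Theorem~\ref{2-1Char}, $\mathcal{B}$ also has infinitely many $Z$-chains, so $\mathcal{A}$ and $\mathcal{B}$ have the same (countably infinite) number of $Z$-chains. By part (i), $\mathcal{A}$ and $\mathcal{B}$ have the same number of $k$-cycles for every $k \geq 1$. Both invariants therefore coincide, and the isomorphism criterion yields $\mathcal{A} \cong \mathcal{B} = \Pi_C\mathcal{A}$.

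The only point requiring any care is to confirm that ``infinitely many'' unambiguously means countably infinite here, so that agreement of the $Z$-chain count is genuine rather than merely both being ``infinite'' in some coarser sense; since $\mathcal{A}$ is computable and $\mathcal{B}$ is a cohesive power, both are countable, and any infinite orbit-count is $\aleph_0$, so the two structures really do match. Everything else is a direct invocation of the preceding theorem, so this should be the most immediate of the corollaries in the section.
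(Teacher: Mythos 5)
Your proposal is correct and is exactly the argument the paper intends: the corollary is stated without proof because it follows immediately from Theorem~\ref{2-1Char} (parts (i) and (ii)) together with the classification of countable $2:1$ structures by their numbers of $k$-cycles and $Z$-chains, which is precisely how you argue both directions. Your remark that both structures are countable, so "infinitely many $Z$-chains" means $\aleph_0$ in each and the counts genuinely agree, is the right (and only) point of care.
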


\begin{corollary}
Let $\mathcal{A}$ be a computable $2 :1$ structure with no $Z$-chains. Let $C$ be a cohesive set, and $\mathcal{B} =\prod _{C}\mathcal{A}$. Then there is a computable (infinitary) $\Sigma _{2}$ sentence $\alpha $ such that $\mathcal{B} \vDash \alpha $ and $\mathcal{A} \nvDash \alpha $. 
\end{corollary}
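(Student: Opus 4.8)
The plan is to exhibit a single computable $\Sigma_2$ sentence $\alpha$ asserting the existence of a $Z$-chain, and then invoke Theorem~\ref{2-1Char}(ii): since $\mathcal{B}=\prod_C\mathcal{A}$ always has infinitely many $Z$-chains, in particular at least one, we obtain $\mathcal{B}\vDash\alpha$, whereas $\mathcal{A}$ has no $Z$-chains by hypothesis, so $\mathcal{A}\nvDash\alpha$. Thus the whole content of the corollary reduces to writing down a sentence of the correct syntactic complexity that is equivalent, in every $2{:}1$ structure, to the statement ``some orbit is a $Z$-chain.''

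First I would express the property ``the orbit of $x$ is a $Z$-chain'' by a computable $\Pi_1$ formula. The key observation is that in a $2{:}1$ structure an orbit is a $k$-cycle (for some $k$) exactly when it contains a cycle, and following $f$ forward from any point of such an orbit eventually lands on that cycle; hence the orbit of $x$ contains a cycle if and only if $f^{m+k}(x)=f^{m}(x)$ for some $m\geq 0$ and some $k\geq 1$. Consequently the orbit of $x$ is a $Z$-chain if and only if
\[
\bigwedge_{\substack{m\geq 0\\ k\geq 1}}\bigl(f^{m+k}(x)\neq f^{m}(x)\bigr).
\]
This conjunction is c.e.\ and each conjunct is quantifier-free, so it is a computable $\Pi_1$ formula in the variable $x$. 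Prepending an existential quantifier yields the computable $\Sigma_2$ sentence
\[
\alpha:\qquad \exists x\,\bigwedge_{\substack{m\geq 0\\ k\geq 1}}\bigl(f^{m+k}(x)\neq f^{m}(x)\bigr).
\]

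With $\alpha$ in hand, verifying the two satisfaction claims is routine: every orbit of $\mathcal{A}$ is a cycle (with trees attached), so for each $x$ the forward iterates $f^{m}(x)$ reach the cycle and then repeat, giving $\mathcal{A}\nvDash\alpha$; and any element lying in one of the $Z$-chains guaranteed by Theorem~\ref{2-1Char}(ii) is a witness showing $\mathcal{B}\vDash\alpha$.

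The step I expect to require the most care is choosing the inner formula correctly. The naive analogue of the injection-structure sentence, namely $\exists x\,\bigwedge_{k\geq 1}(f^{k}(x)\neq x)$, does \emph{not} work here: because each orbit of a $2{:}1$ structure carries a full binary tree whose nodes never return to themselves under $f$, such a sentence is satisfied already in $\mathcal{A}$ by any tree node, and therefore fails to separate $\mathcal{A}$ from $\mathcal{B}$. The point is that one must detect the absence of a cycle \emph{anywhere} in the orbit, not merely a cycle through $x$ itself, which is exactly what replacing $f^{k}(x)\neq x$ by $f^{m+k}(x)\neq f^{m}(x)$ accomplishes.
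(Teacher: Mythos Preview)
Your proof is correct and takes essentially the same approach as the paper: the paper's sentence $\alpha$ is exactly $\exists x\bigwedge_{l\geq 0,\,k>0}(f^{l+k}(x)\neq f^{l}(x))$, which is your formula with $l$ in place of $m$. Your additional explanation of why the naive injection-structure sentence $\exists x\bigwedge_{k\geq 1}(f^{k}(x)\neq x)$ fails here is a worthwhile clarification that the paper omits.
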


\begin{proof}
  Let $\alpha $ say that there is a $Z$-chain: \bigskip

$ \exists x\bigwedge _{l \geq 0 \&k >0}(f^{(l +k)}(x) \neq f^{l}(x))$. 
\end{proof}

\section{
(2,0):1 structures}
We will now investigate the class of (2,0):1 structures, which includes 2:1 structures. They were introduced and studied in \cite{CHR1} from the computability-theoretic point of view with focus on the complexity of isomorphisms
between these structures.

\begin{definition}
A $(2 ,0) :1$ \emph{structure} is a structure with a single unary function, $\mathcal{A} =(A ,f)$ where $f :A \rightarrow A$, such that for every $a \in A$, we have  $c a r d (f^{ -1} (a)) \in \{0 ,2\}$. 
\end{definition}

As usual, a (2,0):1 structure $\mathcal{A}$ is often identified with its directed graph $G (A ,f)$,  and the orbit of $a$ is defined to be the set of all points in $A$ which belong to the connected component of $G (A ,f)$ containing $a$. The orbits of (2,0):1 structures can be $k$-cycles for $k \geq 1 ,$ $Z$-chains, or $\omega $-chains. A $k$-cycle consists of a directed one-to-one cycle of size $k$ such that for each node of the cycle there is a connecting edge, which is not part of the cycle and pointing toward the cycle, to which the root of a binary tree is attached with all tree edges pointing toward the cycle. Here, a tree can be finite or infinite and it has to satisfy the condition  $c a r d (f^{ -1} (a)) \in \{0 ,2\}$. \medskip

A $Z$-chain consists of a directed one-to-one basic sequence of nodes ordered as integers, with a binary tree attached to every node of the basic sequence as follows: there is a connecting edge, which is not part of the basic sequence but points toward the sequence, to which the root of a tree is attached. Hence each element of a $k$-cycle or a $Z$-chain also has a binary branching tree attached to it as its root and with all edges directed toward the root. An $\omega $-chain consists of a directed basic sequence of nodes ordered as natural numbers such that for every node except the first one there is a binary branching tree attached as above, with connecting edge pointing toward the basic sequence and the tree edges pointing toward the root. Hence the first node of the basic sequence does not belong to the range of $f$. For pictures illustrating orbits see Section 1 in \cite{CHR1}. \medskip

Let $(A ,f)$ be a (2,0):1 structure and $a \in A$. The \emph{length of} $a$ is defined as: \medskip

$l(a) =\sup \{n +1 :\left \vert \{a ,f(a) ,f^{2}(a) ,\ldots  ,f^{n +1}(a)\}\right \vert $$ =n +1\}$. \medskip

\noindent  It is the longest ``non-cycling'' directed path starting with $a .$ It can be finite or infinite.

\begin{definition}
Let $\mathcal{A} =(A ,f)$ be a $(2 ,0) :1$ structure. Let $k ,n \in \omega  -\{0\}$.  \medskip  

\noindent (a) The \emph{cycle character} of $\mathcal{A}$ is

\begin{center}
$\chi _{cycle}(\mathcal{A}) =\{ \langle k ,n \rangle  :$ $\mathcal{A}$ has $ \geqslant n$ many $k$-cycles$\}$. \bigskip 
\end{center}\par
\noindent (b) The \emph{path character} of $\mathcal{A}$ is \medskip

\begin{center}
$\chi _{path}(\mathcal{A}) =\{ \langle k ,n \rangle  :$ $\mathcal{A}$ has $ \geqslant n$ many $a$ such that $l(a) =k\}$. \bigskip 
\end{center}\par
\noindent (c) The \emph{endpath character} of $\mathcal{A}$ is

\begin{center}
$\chi _{endpath}(\mathcal{A}) =\{ \langle k ,n \rangle  :$ $\mathcal{A}$ has $ \geqslant n$ many $a \notin f(A)$ such that $l(a) =k\}$. \bigskip 
\end{center}\par
\end{definition}

\noindent
We say that a character is bounded if there is an upper bound on $k$. \bigskip

(2,0):1 structures have the following axioms: \bigskip

$ \forall y \forall x \exists z[f(x) \neq y \vee (z \neq x \wedge f(z) =y)]$  and\bigskip

$ \forall x_{1} \forall x_{2} \forall x_{3}[(f(x_{1}) =f(x_{2}) \wedge f(x_{2}) =f(x_{3})) \Rightarrow (x_{1} =x_{2} \vee x_{1} =x_{3} \vee x_{2} =x_{3})]$.\bigskip

\noindent Hence a cohesive power of a computable (2,0):1 structure is a (2,0):1 structure.

\begin{proposition}
Let $\mathcal{A}$ be a computable $(2 ,0) :1$  structure. Let $C$ be a cohesive set, and let $\mathcal{B} =\prod _{C}\mathcal{A}$. Then $\mathcal{A}$ and $\mathcal{B}$ have tha same cycle character, path character, and endpath character.
\end{proposition}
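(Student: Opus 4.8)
The plan is to show that each of the three characters---cycle, path, and endpath---is preserved by exhibiting, for each defining clause ``$\mathcal{A}$ has $\geq n$ many objects of type $t$ with parameter $k$,'' a first-order sentence of low arithmetical complexity whose truth is transferred between $\mathcal{A}$ and $\mathcal{B}$ by Dimitrov's Theorem~\ref{rumen}. Since membership $\langle k,n\rangle\in\chi(\mathcal{A})$ is monotone in $n$, it suffices to prove that for every fixed $k,n\geq 1$ the relevant sentence holds in $\mathcal{A}$ if and only if it holds in $\mathcal{B}$; equality of the three characters then follows clause by clause.

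First I would treat the cycle character, which is the easiest case and essentially repeats the argument of Theorem~\ref{2-1Char}(i). The statement ``$\mathcal{A}$ has $\geq n$ many $k$-cycles'' is expressed by the purely existential sentence $\theta_{n,k}$ asserting the existence of $n$ pairwise ``orbit-distinct'' points $x_m$ with $f^k(x_m)=x_m$ and $f^l(x_m)\neq x_m$ for $1\leq l<k$; this is a $\Sigma_1^0$ (hence $\Sigma_2^0$) sentence, so by Theorem~\ref{rumen}(1) it holds in $\mathcal{A}$ iff it holds in $\mathcal{B}$. Next I would handle the path character. The length predicate $l(a)=k$ can be written as $\big(|\{a,f(a),\dots,f^{k}(a)\}|=k+1\big)\wedge\neg\big(|\{a,\dots,f^{k+1}(a)\}|=k+2\big)$, i.e.\ as a conjunction of a quantifier-free formula and its negation, so ``$\mathcal{A}$ has $\geq n$ many $a$ with $l(a)=k$'' is again a $\Sigma_1^0$ sentence (an existential quantifier over $n$ witnesses of a quantifier-free matrix, where one must also assert the witnesses lie in distinct orbits, expressible by finitely many inequalities $f^s(x_i)\neq f^t(x_j)$ up to the relevant bounds), and Theorem~\ref{rumen}(1) again applies.

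The endpath character is where the real obstacle lies, because the clause ``$a\notin f(A)$'' is a $\Pi_1^0$ condition (``$\forall y\, f(y)\neq a$''), so the naive sentence for ``$\geq n$ many $a\notin f(A)$ with $l(a)=k$'' is $\Sigma_2^0$ rather than $\Sigma_1^0$. One direction, that $\mathcal{A}\models\sigma$ implies $\mathcal{B}\models\sigma$ for a $\Sigma_2^0$ sentence $\sigma$, is handed to us directly by Theorem~\ref{rumen}(1), since $\Sigma_2^0$ sentences transfer in both directions. Thus the content reduces to checking that the endpath statement genuinely is $\Sigma_2^0$: it is $\exists x_1\cdots\exists x_n\,\forall y\,[\,\bigwedge_i(f(y)\neq x_i)\wedge(\text{pairwise orbit-distinct})\wedge\bigwedge_i(l(x_i)=k)\,]$, where the bounded length condition $l(x_i)=k$ and the orbit-distinctness are quantifier-free as above. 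This is in $\Sigma_2^0$ form, so Theorem~\ref{rumen}(1) gives the biconditional between $\mathcal{A}$ and $\mathcal{B}$ outright, and the three equalities of characters follow. The one point requiring care---and the step I expect to be most delicate---is writing the ``distinct orbits'' clauses correctly for the path and endpath characters: since two points can have the same finite length yet lie in the same orbit (for instance two leaves hanging off the same cycle), one cannot merely demand $x_i\neq x_j$, but must rule out that $x_j$ is reachable from $x_i$ (or vice versa) within the bounded horizon determined by $k$, which is still expressible by finitely many quantifier-free inequalities once $k$ is fixed. With that bookkeeping in place, every clause of each character is pinned to a $\Sigma_2^0$-or-simpler sentence, and the proposition follows from Theorem~\ref{rumen}.
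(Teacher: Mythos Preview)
Your overall strategy is exactly the paper's: express each clause ``$\langle k,n\rangle$ belongs to the character'' by a sentence of complexity at most $\Sigma_2^0$ and invoke Theorem~\ref{rumen}(1). The cycle and endpath cases match the paper essentially verbatim.

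There is, however, one genuine slip. The path character and the endpath character count \emph{elements}, not orbits: $\chi_{path}(\mathcal{A})=\{\langle k,n\rangle:\mathcal{A}\text{ has }\geq n\text{ many }a\text{ with }l(a)=k\}$, and similarly for $\chi_{endpath}$. Two leaves hanging off the same cycle with the same length are two distinct witnesses and must be counted separately. Your added ``distinct orbits'' clauses would therefore make your sentence express the wrong property (namely, ``$\geq n$ orbits contain such a point''), and the biconditional you obtain from Theorem~\ref{rumen} would no longer be about $\chi_{path}$ or $\chi_{endpath}$. The fix is simply to drop that clause: pairwise distinctness $\bigwedge_{i<j}x_i\neq x_j$ is exactly what the definition asks for, and it keeps the sentence quantifier-free in the matrix, so the complexity bounds you need ($\Sigma_1^0$ for path, $\Sigma_2^0$ for endpath) still hold. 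With that correction your proof coincides with the paper's.
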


\begin{proof}
 Let $k ,n \geq 1$. Then $ \langle k ,n \rangle  \in \chi _{cycle}(\mathcal{A})$ can be expressed by a $\Sigma _{1}^{0}$ sentence as in the proof of Theorem \ref{2-1Char} (i). \medskip  

Furthermore, $ \langle k ,n \rangle  \in \chi _{path}(\mathcal{A})$ can be expressed  by the following $\Sigma _{1}^{0}$ sentence:

$\noindent ( \exists x_{1}) \cdots ( \exists x_{n})[\bigwedge _{1 \leq m \leq n}((\bigwedge _{0 \leq l <s <k}f^{l}(x_{m}) \neq f^{s}(x_{m})) \wedge $ \bigskip

$\bigvee _{1 \leq l <k}(f^{k}(x_{m}) =f^{l}(x_{m})))$$ \wedge \bigwedge _{1 \leq i <j \leq n}x_{i} \neq x_{j}]$.\bigskip

Finally, $ \langle k ,n \rangle  \in \chi _{endpath}(\mathcal{A})$ can be expressed  by the following $\Sigma _{2}^{0}$ sentence:\bigskip 

$( \exists x_{1}) \cdots ( \exists x_{n})[\bigwedge _{1 \leq m \leq n}((\bigwedge _{0 \leq l <s <k}f^{l}(x_{m}) \neq f^{s}(x_{m})) \wedge $ \bigskip

$\bigvee _{1 \leq l <k}(f^{k}(x_{m}) =f^{l}(x_{m})) \wedge ( \forall y)(f(x_{m}) \neq y)) \wedge \bigwedge _{1 \leq i <j \leq n}x_{i} \neq x_{j}]$.\bigskip

By Theorem \ref{rumen} it follows that $\mathcal{A}$ and $\mathcal{B}$ satisfy the same sentences above for every pair $(k ,n) ,$ so $\mathcal{A}$ and $\mathcal{B}$ have the same characters.
\end{proof}

\begin{theorem}
Let $C$ be a cohesive set. \medskip

(i) Let $\mathcal{A}$ be a computable $(2 ,0) :1$ structure with bounded path character and no infinite orbits. Then $\Pi _{C}\mathcal{A} \cong \mathcal{A}$. \medskip

(ii) Let $\mathcal{A}$ be a computable $(2 ,0) :1$ structure with unbounded path character or with an infinite orbit. Then $\Pi _{C}\mathcal{A}$ has infinitely many $Z$-chains. \medskip

Hence if
$\mathcal{A}$ has only finitely many (including zero) $Z$-chains, we have $\mathcal{A} \ncong \Pi _{C}\mathcal{A}$.
\end{theorem}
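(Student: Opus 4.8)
The plan is to treat the two parts separately: part (i) by a transfer argument through Theorem \ref{rumen}, and part (ii) by an explicit construction of functions $\psi_{m,z}$ in the same spirit as the injection and $2\!:\!1$ cases.

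For part (i), I would first observe that bounded path character bounds every cycle length, since an element $a$ lying on a $k$-cycle has $l(a)=k$, so $\langle k,1\rangle\in\chi_{path}(\mathcal{A})$; it likewise bounds the depth of every attached tree, because a node at tree-depth $d$ above a $k$-cycle has length $d+k$. Combined with the hypothesis of no infinite orbits, this forces every orbit to be finite of size at most some fixed $N$, so there are only finitely many isomorphism types $T_1,\dots,T_r$ of orbits. For each finite orbit type $T$, the statement ``$\mathcal{A}$ has at least $n$ orbits isomorphic to $T$'' is expressible by a $\Sigma_2^0$ sentence: existentially name the nodes of $n$ disjoint copies of $T$, assert the finitely many $f$-equalities and inequalities recording the shape of $T$ together with distinctness of the copies, and for each leaf of $T$ adjoin the universal clause $(\forall z)(f(z)\neq\text{leaf})$; the $(2,0)\!:\!1$ axioms then force these finite subgraphs to be genuine full orbits. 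Since ``every orbit has at most $N$ elements'' is a $\Pi_1^0$ sentence, Theorem \ref{rumen}(1) transfers all of these $\Sigma_2^0$ and $\Pi_2^0$ sentences between $\mathcal{A}$ and $\mathcal{B}$, so $\mathcal{B}$ realizes exactly the same orbit types with exactly the same multiplicities and no others; hence $\mathcal{B}\cong\mathcal{A}$.

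For part (ii), the goal is to produce, for each $m\in\omega$, the basic bi-infinite sequence of a $Z$-chain inside $\mathcal{B}$, with distinct values of $m$ landing in distinct orbits, and I would split into two cases. If $\mathcal{A}$ has unbounded path character, then as in the unbounded-character injection theorem I can computably choose elements $a(0),a(1),\dots$ in pairwise distinct orbits with $l(a(j))>2j+2$, so each $a(j)$ starts a non-cycling forward $f$-path longer than $2j+2$; I then set $\psi_{m,z}(i)=f^{\langle m,i\rangle+z}(a(\langle m,i\rangle))$ when $|z|\leq\langle m,i\rangle$ and leave it undefined otherwise, the centering exponent $\langle m,i\rangle$ keeping the total exponent inside the injective range $[0,l(a(\langle m,i\rangle))-1]$ and growing with $i$, so each $\psi_{m,z}$ has cofinite domain and $[\psi_{m,z}]\in B$. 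If instead $\mathcal{A}$ has an infinite orbit, I would first note that an infinite orbit with bounded path character cannot be a $k$-cycle with an infinite tree, since by K\"onig's lemma an infinite finitely-branching tree has unbounded depth and hence unbounded path character; thus the orbit is a $Z$-chain or an $\omega$-chain and contains a genuine infinite forward $f$-path from some $a$, and I mimic the infinite-orbit injection construction, setting $\psi_{m,z}(n)=f^{e(m,n)+z}(a)$ for a spacing function $e(m,n)$ growing fast enough that the exponent is nonnegative for large $n$ and that distinct $m$ never collide.

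In either case the verification is uniform: $f([\psi_{m,z}])=[\psi_{m,z+1}]$ because the exponent increases by one; for fixed $m$ and $z_1\neq z_2$ the exponents differ within a range on which the forward path is injective, so $[\psi_{m,z_1}]\neq[\psi_{m,z_2}]$, giving a non-cycling bi-infinite sequence, which in the $(2,0)\!:\!1$ structure $\mathcal{B}$ is the basic sequence of a $Z$-chain; and for $m_1\neq m_2$ the underlying elements lie in different orbits of $\mathcal{A}$, so $f^{k_1}([\psi_{m_1,0}])\neq f^{k_2}([\psi_{m_2,0}])$ for all $k_1,k_2$ and the chains are distinct. Hence $\mathcal{B}$ has infinitely many $Z$-chains, and the concluding statement follows since a structure with only finitely many $Z$-chains cannot be isomorphic to one with infinitely many. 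I expect the main obstacle to be the infinite-orbit subcase of part (ii): ruling out the $k$-cycle-with-infinite-tree possibility so that an infinite forward path is actually available, and choosing the spacing exponents so that the domains stay cofinite and the chains for different $m$ remain genuinely separated.
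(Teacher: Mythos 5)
Your part (i) is correct and is essentially the paper's own argument: the paper transfers the $\Pi_1^0$ sentence $(\forall x)\bigvee_{0\leq m<n\leq M}(f^m(x)=f^n(x))$ and the $\Sigma_2^0$ descriptions of the (boundedly many, boundedly sized) finite orbit types through Theorem \ref{rumen}, exactly as you do, just more tersely. For part (ii), your centering trick $\psi_{m,z}(i)=f^{\langle m,i\rangle+z}(a(\langle m,i\rangle))$, which keeps all exponents nonnegative, is a genuinely nice variant: the paper instead iterates a partial inverse $f^{-1}$ (``choose the smaller preimage''), and so has to worry about backward iterates being undefined, a worry your construction sidesteps entirely. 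Your Case B (infinite orbit, bounded path character) is also sound: K\"onig's lemma rules out a cycle with an infinite tree, so some element has an infinite injective forward path, and the exponent spacing from the paper's injection-structure proof (e.g.\ $e(m,n)=m(2n+1)+n-1$) finishes the verification.

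However, your Case A has a genuine gap: you require the base points $a(0),a(1),\ldots$ to lie in \emph{pairwise distinct orbits}, and such points need not exist. Unbounded path character can come from a single orbit: let $\mathcal{A}$ consist of one loop $f(c)=c$ with a full infinite binary tree attached to it (this is even a computable $2:1$ structure). By K\"onig's lemma its path character is unbounded, yet $\mathcal{A}$ has exactly one orbit, so your Case A construction cannot even begin; and your Case B method does not apply either, since every forward path in this structure enters the loop, so there is no infinite injective forward path. (Separately, even when infinitely many orbits do exist, ``lies in a different orbit'' is only a $\Sigma_1^0$ condition, so ``computably choose'' would need justification.) The repair---and this is precisely what the paper's spacing condition $\left\vert\{f^z(g(n)):\cdots\}\right\vert=(m+1)^2$ is doing, since it never mentions orbits---is to replace global orbit-distinctness by a finite, decidable separation requirement: choose $a(j)$ with $l(a(j))>2j+2$ and with the forward segments $\{f^z(a(j)):0\leq z\leq 2j\}$ pairwise disjoint. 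Such elements always exist (slide a window of length $2j+1$ along a sufficiently long non-cycling path; each of the finitely many previously used elements can spoil at most $2j+1$ window positions) and can be found by computable search, since both the length condition and disjointness from a finite set are decidable. With that change your verification goes through verbatim: for $m_1\neq m_2$ and fixed $k_1,k_2$, for all large $i$ the exponents $k_j+\langle m_j,i\rangle$ stay inside the disjoint segments of $a(\langle m_1,i\rangle)$ and $a(\langle m_2,i\rangle)$, so the equality set is finite and the two $Z$-chains of $\Pi_C\mathcal{A}$ are distinct.
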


\begin{proof}
 (i) Let $\mathcal{B} =\Pi _{C}\mathcal{A}$. Let $M \in \omega $ be the least upper bound for the length of (finite) paths in $\mathcal{A}$. Since $\mathcal{A}$ does not have infinite orbits, it satisfies the following $\Pi _{1}^{0}$ sentence: \bigskip

$( \forall x)\bigvee _{0 \leq m <n \leq M}(f^{m}(x) =f^{n}(x))$.\bigskip

\noindent By Theorem \ref{rumen}, $\mathcal{B}$ also satisfies this sentence, which implies that $\mathcal{B}$ has no infinite orbits. Since we can completely describe finite orbits by $\Sigma _{2}^{0}$ sentences, it follows that $\mathcal{A}$ and $\mathcal{B}$ are isomorphic.\bigskip

(ii) Fix a natural ordering on the domain $A .$ We will denote by $f^{ -1}$ a partially computable unary function on $A ,$ which for every $a$ chooses the smaller of the two elements that $f$ maps into $a ,$ if $c a r d (f^{ -1} (a)) =2$, and is undefined otherwise.  Hence we have $f^{ -z}$ for every integer $z$, where $f^{0}(a) =a$. \medskip

By the assumption about $\mathcal{A} ,$ we can define a computable function $g :\omega  \rightarrow A ,$ which chooses elements $g(n)$ in $A$ such that for every $m \in \omega  ,$ we have $\left \vert \{f^{z}(g(n)) :f^{z}(g(n))\downarrow  \wedge \ 0 \leq n \leq m \wedge  -n \leq z \leq m\}\right \vert  =(m +1)^{2}$. We now proceed similarly as in (ii)  in the proof of Theorem \ref{2-1Char} to show that there are infinitely many $Z$-chains.
\end{proof}

\bigskip The following theorem focuses on $\omega $-chains.

\begin{theorem}
Let $C$ be a cohesive set. Let $\mathcal{A}$ be a computable $(2 ,0) :1$ structure with bounded endpath character and with finitely many (including $0$) elements in $A -f(A)$ of infinite length. Then $\prod _{C}\mathcal{A}$ and $\mathcal{A}$ have the same number of $\omega $-chains.

\end{theorem}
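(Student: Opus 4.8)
The plan is to show that every orbit of $\mathcal{B}=\prod_C\mathcal{A}$ that meets the canonical copy $d(A)$ is rigidly isomorphic, via $d$, to the corresponding orbit of $\mathcal{A}$, and that under the finiteness hypothesis every $\omega$-chain of $\mathcal{B}$ must meet $d(A)$. The key lemma is that preimages of constants are constants. First I would show that for every $b\in A$ the set of $f^{\mathcal{B}}$-preimages of $[c_b]$ is exactly $\{[c_{b'}]:f^{\mathcal{A}}(b')=b\}$. Indeed, $f^{\mathcal{B}}([\psi])=[c_b]$ means $C\subseteq^{\ast}\{i:f(\psi(i))\downarrow=b\}$, so for cohesively many $i$ the value $\psi(i)$ lies in the finite fiber $f^{-1}(b)$, which has size $0$ or $2$. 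Splitting this fiber by the c.e.\ set $\{i:\psi(i)\downarrow=b'\}$ and invoking cohesiveness of $C$, the function $\psi$ agrees with a single constant $c_{b'}$ on all but finitely many elements of $C$, whence $[\psi]=[c_{b'}]$ for some $b'\in f^{-1}(b)$. Since also $f^{\mathcal{B}}([c_a])=[c_{f(a)}]$, both the forward and the backward closure of any $[c_a]$ inside $G_{\mathcal{B}}$ stay within $d(A)$.

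It follows that for each $a\in A$ the orbit $\mathcal{O}_{f^{\mathcal{B}}}([c_a])$ equals $\{[c_b]:b\in\mathcal{O}_{f^{\mathcal{A}}}(a)\}$, and that $d$ restricts to a directed-graph isomorphism of $\mathcal{O}_{f^{\mathcal{A}}}(a)$ onto it (it preserves $f$ and, by the lemma, reflects preimages bijectively). In particular $\mathcal{O}_{f^{\mathcal{B}}}([c_a])$ is an $\omega$-chain if and only if $\mathcal{O}_{f^{\mathcal{A}}}(a)$ is. Since $d$ is injective and both preserves and reflects the orbit relation, this produces a bijection between the $\omega$-chains of $\mathcal{A}$ and those $\omega$-chains of $\mathcal{B}$ that contain a constant. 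It remains to rule out $\omega$-chains of $\mathcal{B}$ disjoint from $d(A)$. Every $\omega$-chain contains its initial node, which lies in $A\setminus f(A)$ and has infinite length, so it suffices to show that every infinite-length element of $\mathcal{B}$ outside the range is some constant $[c_a]$ with $a\in A\setminus f(A)$ and $l(a)=\infty$. The predicate $R(x):\equiv x\notin ran(f)\wedge l(x)=\infty$ is $\Pi_1^0$, since $x\notin ran(f)$ is $\Pi_1^0$ and $l(x)=\infty$ is the $\Pi_1^0$ statement that all iterates $f^n(x)$ are distinct. Hence ``$\mathcal{A}$ has at least $n$ elements satisfying $R$'' is a $\Sigma_2^0$ sentence and ``at most $n$'' is a $\Pi_2^0$ sentence, and by Theorem \ref{rumen}(1) both transfer between $\mathcal{A}$ and $\mathcal{B}$.

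By hypothesis the set $S=\{a\in A\setminus f(A):l(a)=\infty\}$ is finite, so $\mathcal{A}$ realizes exactly $|S|$ elements satisfying $R$, and therefore so does $\mathcal{B}$. The images $\{[c_a]:a\in S\}$ are $|S|$ distinct elements of $\mathcal{B}$ satisfying $R$ (that $a\notin f(A)$ forces $[c_a]\notin ran(f^{\mathcal{B}})$ is the empty-fiber case of the lemma, and $l([c_a])=\infty$ because $f^{\mathcal{B}}$ iterated sends $[c_a]$ to $[c_{f^n(a)}]$), so they are all of them. Thus every $\omega$-chain of $\mathcal{B}$ meets $d(A)$, the bijection above is onto, and $\mathcal{A}$ and $\mathcal{B}$ have the same number of $\omega$-chains. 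The hard part, and the reason a naive sentence-transfer fails, is precisely this separation: the initial node of an $\omega$-chain and a leaf of one of the binary trees hanging off a $Z$- or $\omega$-chain both lie outside $f(A)$ and have infinite length, so they are elementarily indistinguishable as points and $\omega$-chains cannot be told apart from $Z$-chains by a low-complexity formula in a single variable. The finiteness of $S$ is exactly what forbids non-constant infinite-length sources in $\mathcal{B}$ and hence any genuinely new $\omega$-chains, while bounded endpath character plays the analogous controlling role for the finite orbits, as in the preceding results, guaranteeing that the constant-free components contribute no $\omega$-chains.
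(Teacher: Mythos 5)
Your first two paragraphs are correct and even supply a detail the paper leaves implicit: the lemma that $f^{\mathcal{B}}$-preimages of canonical constants are again canonical constants (via cohesiveness and the fact that fibers of $f$ have size $0$ or $2$), hence that each orbit of $\mathcal{B}$ meeting $d(A)$ is isomorphic via $d$ to the corresponding orbit of $\mathcal{A}$. Your reduction of the theorem to the claim that every element of $B - f^{\mathcal{B}}(B)$ of infinite length is a constant is also exactly the right target. The gap is in how you prove that claim. You assert that $R(x) \equiv x \notin ran(f) \wedge l(x) = \infty$ is $\Pi_1^0$; but ``$l(x)=\infty$'' is the infinite conjunction $\bigwedge_{m<n} f^m(x) \neq f^n(x)$, which is $\Pi_1^0$ only as an arithmetical predicate, not as a finitary first-order formula. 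Consequently your counting sentences ``$\mathcal{B}$ has at least (at most) $n$ elements satisfying $R$'' are computable \emph{infinitary} $\Sigma_2$ and $\Pi_2$ sentences, to which Theorem \ref{rumen} does not apply. This is not a technicality: downward transfer of computable infinitary $\Sigma_2$ sentences from a cohesive power to the base structure is precisely what fails throughout this paper — e.g., for an injection structure with unbounded character and no infinite orbits, ``some element has all iterates distinct'' is computable $\Sigma_2$, true in the power and false in the structure, and the analogous sentence asserting a $Z$-chain behaves the same way for $2{:}1$ structures. So the step ``therefore $\mathcal{B}$ realizes exactly $|S|$ elements satisfying $R$'' is unjustified, and it is the crux of the whole proof.

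The gap is repairable, and the repair is exactly where the bounded endpath character hypothesis enters (in your write-up it is relegated to a vague role ``controlling the finite orbits''). Let $M$ bound the endpath character of $\mathcal{A}$. By the proposition preceding this theorem, $\mathcal{B}$ has the \emph{same} endpath character, so in both structures an element outside $ran(f)$ has infinite length if and only if it has length greater than $M$, i.e., if and only if it satisfies the genuinely finitary $\Pi_1^0$ formula $\forall y\,(f(y) \neq x) \wedge \bigwedge_{0 \leq i < j \leq M+1} f^i(x) \neq f^j(x)$. Replacing $R$ by this formula makes your counting sentences honest finitary $\Sigma_2^0$ and $\Pi_2^0$ sentences, Theorem \ref{rumen}(1) applies, and since $\mathcal{A}$ has exactly $|S|$ such elements so does $\mathcal{B}$; the rest of your argument then goes through. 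Note that the paper proves the same key claim by a different, direct cohesiveness computation: for $[\psi] \in B - f(B)$ on an $\omega$-chain it decomposes $\{i : \psi(i){\downarrow} \wedge l(\psi(i)) \leq M\}$ into finitely many c.e.\ sets (this is where $M$ is needed there), uses that $ran(f)$ is c.e.\ together with $[\psi] \notin f(B)$, and finally the finiteness of $S$ to force $[\psi] = [c_{a_0}]$; your repaired route is a legitimate, somewhat slicker alternative, but as submitted the proof is incomplete at its central step.
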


\begin{proof}
Let $\mathcal{B} =\Pi _{C}\mathcal{A}$. Let $M \in \omega $ be the least upper bound for the endpath character of $\mathcal{A}$; that is, $M =\max \{k : \langle k ,n \rangle  \in \chi _{endpath}(\mathcal{A})\}$.

Suppose that in $\mathcal{B}$ we choose $[\psi ] \in B -f(B)$ such that $[\psi ]$ is an element of an $\omega $-chain. We will show that $[\psi ]$ belongs to the range of the canonical embedding function of $\mathcal{A}$ info $\mathcal{B}$.  

Consider a c.e.\ set $W =\{i \in \omega  :\psi (i)\uparrow  \vee \ l(\psi (i)) >M\}$, which is infinite. We claim that $C \subseteq ^{ \ast }W$. To establish the claim, assume otherwise, hence $C \subseteq ^{ \ast }\overline{W}$. We have that $\overline{W} =\{i \in \omega  :\psi (i)\downarrow  \wedge \ \ l(\psi (i)) \leq M\}$ so it is a finite union of c.e.\ sets $Y_{j}$ for $j \in \{1 ,\ldots  ,M\}$ where\bigskip

$Y_{j} =\bigcup _{0 \leq k <j}\{i \in \omega  :\psi (i)\downarrow  \wedge f^{j}(\psi (i))\downarrow  =f^{k}(\psi (i))\downarrow  \wedge $ \bigskip

$\bigwedge _{0 \leq m <n <j}f^{m}(\psi (i))\downarrow  \neq f^{n}(\psi (i))\downarrow \}$. \bigskip 

\noindent Hence, since $C$ is cohesive, for some $j_{0} ,k_{0}$ such that $j_{0} >k_{0}$ we have that\bigskip  

$C \subseteq ^{ \ast }\{i \in \omega  :\psi (i)\downarrow  \wedge f^{j_{0}}(\psi (i))\downarrow  =f^{k_{0}}(\psi (i))\downarrow  \wedge $ \bigskip

$\bigwedge _{0 \leq m <n <j_{0}}f^{m}(\psi (i))\downarrow  \neq f^{n}(\psi (i))\downarrow \}$,\bigskip 

\noindent which will imply that $f^{j_{0}}[\psi ] =f^{k_{0}}[\psi ]$, contradicting the fact that $[\psi ]$ is an element of an $\omega $-chain. Hence \medskip

 $C \subseteq ^{ \ast }W =\{i \in \omega  :\psi (i)\uparrow  \vee \ l(\psi (i)) >M\}$. \medskip  

\noindent Since $C \subseteq ^{ \ast }\{i \in \omega  :\psi (i)\downarrow \} ,$ we have \medskip  

$C \subseteq ^{ \ast }\{i \in \omega  :\psi (i)\downarrow  \wedge \ l(\psi (i)) >M\} =$ \medskip

$\{i \in \omega  :\psi (i)\downarrow  \wedge \ l(\psi (i)) >M \wedge \psi (i) \in f(A)\}\  \cup $ \smallskip

 $\{i \in \omega  :\psi (i)\downarrow  \wedge \ l(\psi (i)) >M \wedge f(i) \in A -f(A)\} .$ \bigskip  

\noindent We can further show that $C \subseteq ^{ \ast }\{i \in \omega  :\psi (i)\downarrow  \wedge \ \ l(\psi (i)) >M \wedge f(i) \in A -f(A)\}$ since, otherwise, there is $[\tau ] \in B$ such that $f([\tau ]) =[\psi ]$, contradicting the fact that $[\psi ] \in B -f(B) .$

Hence, $C \subseteq ^{ \ast }\bigcup \{X_{a} :a \in (A -f(A)) \wedge l(a) =\infty \} .$ By assumption, the union in the previous formula is a finite union, so for some $a_{0} ,$ we have\medskip 

 $C \subseteq \{i \in \omega  :\psi (i)\downarrow  =a_{0}\}$. \medskip  

\noindent Thus, $[\psi ]$ belongs to the range of the canonical embedding. It follows that the number of $\omega $-chains is the same in $\mathcal{A}$ and $\mathcal{B}$.
\end{proof}

\bigskip For the following result we require more decidability in a computable structure.

\begin{definition}
\cite{CHR1} A computable $(2 ,0) :1$ structure $\mathcal{A} =(A ,f)$ is said to be \emph{highly computable} if $ran(f)$ is a computable set.
\end{definition}

\begin{theorem}
\label{omegachains}Let $C$ be a cohesive set. Let $\mathcal{A}$ be a highly computable $(2 ,0) :1$ structure with unbounded endpath character or with infinitely many elements in $A -f(A)$ of infinite length. Then $\Pi _{C}\mathcal{A}$ has infinitely many $\omega $-chains.\medskip  

Hence if $\mathcal{A}$ has only finitely many $\omega $-chains, we have $\mathcal{A} \ncong \Pi _{C}\mathcal{A} .$
\end{theorem}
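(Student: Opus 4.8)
The plan is to construct, for each natural number $m$, an element of $\mathcal{B} = \Pi_C \mathcal{A}$ that witnesses an $\omega$-chain, with distinct $m$'s yielding distinct chains. The two hypotheses (unbounded endpath character; or infinitely many infinite-length elements of $A - f(A)$) are handled by a common construction but require slightly different spacing arguments. The key feature we exploit is that high computability gives a computable $ran(f)$, hence the predicate ``$a \in A - f(A)$'' is computable, so we can effectively locate the starting nodes of $\omega$-chains and the endpoints of long finite paths.

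First I would treat the case of unbounded endpath character. Using that $ran(f)$ is computable, I would enumerate $A$ and select a computable sequence $g(0), g(1), g(2), \ldots$ of elements of $A - f(A)$ such that $l(g(n)) > n$, spacing them so that the forward paths $\{f^z(g(n)) : 0 \leq z \leq n\}$ are pairwise disjoint for distinct $n$ (this is possible because each such path is finite and we may discard any $g(n)$ whose forward orbit collides with a previously chosen one). For the second case, I would instead pick $g(n)$ to enumerate (with repetitions absorbed by cofinite spacing) the infinitely many infinite-length elements of $A - f(A)$, again ensuring pairwise disjoint initial segments up to length $n$. In both cases, define the partial computable functions
\begin{equation*}
\psi_{m,z}(x) = f^z(g(\langle m, x \rangle)) \quad \text{for } 0 \leq z \leq \langle m, x \rangle, \text{ undefined otherwise.}
\end{equation*}
Since $\langle m, x \rangle \geq x$, each $\psi_{m,z}$ is defined on a cofinite set, so $[\psi_{m,z}] \in B$.

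Next I would verify the orbit structure. For fixed $m$, the relation $f([\psi_{m,z}]) = [\psi_{m,z+1}]$ holds because $f \circ \psi_{m,z} \simeq \psi_{m,z+1}$ wherever both are defined (up to finitely many inputs), giving a forward-infinite chain $\{[\psi_{m,z}] : z \geq 0\}$. The crucial point is that $[\psi_{m,0}]$ lies on an $\omega$-chain rather than a $Z$-chain: since each $g(n) \in A - f(A)$, I must argue $[\psi_{m,0}] \notin f(B)$. This is where high computability is essential and is the main obstacle. I would argue that if some $[\tau]$ satisfied $f([\tau]) = [\psi_{m,0}]$, then $C \subseteq^* \{i : f(\tau(i)) = \psi_{m,0}(i)\}$; but $\psi_{m,0}(i) = g(\langle m, i\rangle) \notin f(A)$ for all large $i$, and since $ran(f)$ is computable this set is c.e.\ and in fact its complement is cofinite on the relevant domain, forcing a contradiction. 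Finally, for $m_1 \neq m_2$ the disjointness of the chosen initial segments gives $f^{k_1}([\psi_{m_1,0}]) \neq f^{k_2}([\psi_{m_2,0}])$ for all $k_1, k_2$, so the chains are distinct. Hence $\mathcal{B}$ has infinitely many $\omega$-chains, and the final implication follows since $\mathcal{A}$ has only finitely many.
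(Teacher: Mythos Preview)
Your proposal is correct and follows essentially the same approach as the paper: construct a computable $g:\omega\to A-f(A)$ whose forward segments are long and pairwise disjoint, define $\psi_{m,z}(x)=f^z(g(\langle m,x\rangle))$ on a cofinite domain, and verify that each $[\psi_{m,0}]$ starts a distinct $\omega$-chain. The paper combines your two cases into a single spacing condition $\left|\{f^{k}(g(n)):0\le n\le m,\ 0\le k\le m\}\right|=\tfrac{(m+1)(m+2)}{2}$, but the content is the same.

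One small misattribution worth correcting: you say high computability is ``essential'' for showing $[\psi_{m,0}]\notin f(B)$, but that step needs no decidability at all. If $f([\tau])=[\psi_{m,0}]$ then $C\subseteq^{*}\{i:f(\tau(i))=g(\langle m,i\rangle)\}$, and this set is empty because $g(\langle m,i\rangle)\notin f(A)$ by construction --- contradiction, full stop. Where high computability is genuinely needed is earlier, in building $g$: to effectively search for and recognize elements of $A-f(A)$ with long forward paths you must be able to decide membership in $ran(f)$. You do say this at the outset, so the proof stands; just relocate the emphasis.
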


\begin{proof}
Let $B =\Pi _{C}\mathcal{A}$. Let $g :\omega  \rightarrow A$ be a computable function such that $g(\omega ) \subseteq A -f(A)$ and for every $m \in \omega $  we have \medskip

 $\vert \{f^{k}(g(n)) :f^{k}(g(n))\downarrow  \wedge \ 0 \leq n \leq m \wedge \ 0 \leq k \leq m\} =\frac{(m +1)(m +2)}{2}$. \medskip  

A partial function $\psi _{m ,n} :\omega  \rightarrow A$ for $m ,n \in \omega $ is defined as follows:\bigskip

$\psi _{m ,n}(x) =\left \{\begin{array}{cc}f^{n}(g( \langle m ,x \rangle )) & \text{if }0 \leq n \leq  \langle m ,x \rangle  ; \\
\uparrow  & \text{otherwise.}\end{array}\right .$

\bigskip \noindent It follows that every $[\psi _{m ,n}] \in B$. If $n_{1} \neq n_{2} ,$ then $[(\psi _{m ,n_{1}})] \neq [(\psi _{m ,n_{2}})]$. Furthermore, $[(\psi _{m ,0})] \in B -f(B)$ and $f([\psi _{m ,n}]) =[\psi _{m ,n +1}]$ , so $\{[\psi _{m ,n}] :n \in \omega \}$ is a subset of an $\omega $-chain. For  any pair of natural numbers $m_{1}$$ ,m_{2}$ such that $m_{1} \neq m_{2}$ and arbitrary $k_{1} ,k_{2}$, we have $f^{k_{1}}([\psi _{m_{1} ,0}]) \neq f^{k_{2}}([\psi _{m_{2} ,0}])$, so $[\psi _{m_{1} ,0}]$ and $[\psi _{m_{2} ,0}]$ belong to different $\omega $-chains. Hence $\mathcal{B}$ has infinitely many $\omega $-chains.
\end{proof}

\begin{corollary}
If $\mathcal{A}$ is a highly computable $(2 ,0) :1$ structure with unbounded endpath character and no $\omega $-chains. Let $C$ be a cohesive set, and  $\mathcal{B} =\prod _{C}\mathcal{A}$.  Then there is a computable $\Sigma _{2}$ sentence $\alpha $ such that $\mathcal{B} \vDash \alpha $ and $\mathcal{A} \nvDash \alpha $.

\end{corollary}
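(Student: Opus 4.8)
The plan is to exploit the one structural feature that Theorem \ref{omegachains} forces apart: by that theorem $\mathcal{B} = \Pi_C\mathcal{A}$ has infinitely many $\omega$-chains (since $\mathcal{A}$ has unbounded endpath character), whereas $\mathcal{A}$ has none by hypothesis. So the sentence $\alpha$ I would write is simply ``there exists an $\omega$-chain,'' and the whole task reduces to expressing this property by a computable $\Sigma_2$ sentence and then reading off the two satisfaction facts.

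First I would isolate the correct definable signature of an $\omega$-chain. An $\omega$-chain is exactly an orbit containing a source, i.e.\ an element $x \notin f(A)$, whose forward $f$-orbit never cycles, i.e.\ $l(x) = \infty$. This is the crucial point that keeps the sentence honest: merely asserting the existence of an infinite orbit would \emph{not} separate $\mathcal{A}$ from $\mathcal{B}$, because $\mathcal{A}$ may well contain $Z$-chains, and indeed $\mathcal{B}$ does by the previous theorem. What distinguishes $\omega$-chains from $Z$-chains is precisely that every element of a $Z$-chain lies in $f(A)$, so the source condition $x \notin f(A)$ together with $l(x) = \infty$ captures $\omega$-chains and nothing else.

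Accordingly I would take $\alpha := \exists x\,\bigl[\forall y\,(f(y) \neq x) \ \wedge\ \bigwedge_{l \geq 0,\,k > 0}(f^{(l+k)}(x) \neq f^{l}(x))\bigr]$. The clause $\forall y\,(f(y)\neq x)$ says $x \notin f(A)$ and is computable $\Pi_1$; the c.e.\ conjunction $\bigwedge_{l,k}(f^{(l+k)}(x) \neq f^{l}(x))$ is a conjunction of quantifier-free formulas asserting $l(x) = \infty$, hence also computable $\Pi_1$. Their conjunction is computable $\Pi_1$, so prefixing the single existential quantifier $\exists x$ yields a computable $\Sigma_2$ sentence, as required.

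Finally I would read off satisfaction. By Theorem \ref{omegachains}, $\mathcal{B}$ has an $\omega$-chain, so fixing a source element of that chain witnesses $\mathcal{B} \vDash \alpha$. Since $\mathcal{A}$ has no $\omega$-chains, it has no source of infinite length, so $\mathcal{A} \vDash \lnot\alpha$. I do not expect a genuine obstacle here beyond the bookkeeping already flagged: the only points requiring care are the verification that the ``source plus infinite length'' description captures $\omega$-chains \emph{exactly} (so that $\alpha$ is really false in $\mathcal{A}$, not merely false about infinite orbits), and the observation that adjoining the $\Pi_1$ source clause $\forall y\,(f(y)\neq x)$ to the length clause does not push the complexity above $\Sigma_2$.
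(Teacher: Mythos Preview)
Your proposal is correct and follows essentially the same approach as the paper: express ``there exists an $\omega$-chain'' as a computable $\Sigma_2$ sentence by saying there is a source (an element outside the range of $f$) with infinite forward length, then invoke Theorem~\ref{omegachains} for $\mathcal{B}\vDash\alpha$ and the hypothesis for $\mathcal{A}\nvDash\alpha$. The only cosmetic difference is that the paper writes the source condition as the infinitary conjunction $\bigwedge_{y\in A}(x\neq f(y))$ rather than your $\forall y\,(f(y)\neq x)$; your finitary $\Pi_1$ clause is arguably cleaner and your justification of the complexity bound is more explicit than the paper's.
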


\begin{proof}
The property that there is an $\omega $-chain can be expressed by a computable (infinitary) $\Sigma _{2}$ sentence $\alpha $:\medskip

\begin{center}
$ \exists x(\bigwedge _{y \in A}(x \neq f(y)) \wedge \bigwedge _{l \geq 0 \wedge k >0}(f^{(l +k)}(x) \neq f^{n}(x)))$ 
\end{center}\par
\end{proof}

\section{
Partial injection structures}
A \emph{partial injection structure} $(A ,f)$ consists of a set $A$ and a partial function $f :A \rightarrow A$ such that if $x ,y \in dom(f)$ and $x \neq y$, then $f(x) \neq f(y) .$ We will call $f$ a \emph{partial injection}. As usual, we write $f(x)\downarrow $ to denote that $x \in dom(f) ,$ and $f(x)\uparrow $ to denote that $x \notin dom(f)$. Also, $f(x)\downarrow y$ stands for $f(x)\downarrow $ and $f(x) =y$. Partial inverse function $f^{ -1}$ is defined naturally.  For $z \in \mathbb{Z}$,  $f^{z}$ is defined as the usual composition of partial functions. Partial injection structures and their computability-theoretic properties, including complexity of their isomorphisms, were studied by Marshall in \cite{LM}. She calls a partial injection structure $(A ,f)$ a \emph{partial computable injection structure} if $A$ a computable set and $f$  is a partial computable function. 

In order to make a partial injection structure $(A ,f)$ into a first-order structure, we will consider it as a relational structure $\mathcal{A} =(A ,G_{f})$, where $G_{f}$ is the graph of $f :$ \medskip

$G_{f} =\{(x ,y) :x \in dom(f) \wedge f(x) =y\} .$ \medskip

\noindent Having this framework in mind, we can still write $(A ,f) .$
\begin{definition}
We say that a partial computable injection structure $(A ,f)$ is a \emph{computable partial injection structure} if $G_{f}$ is a computable binary relation. Hence $(A ,G_{f})$ is a computable structure.

\begin{proposition}
Let $(A ,f)$ be a partial computable injection structure.

(i) If $ran(f)$ is computable, then $G_{f}$ is computable.

(ii) If $dom(f)$  is computable, then $G_{f}$ is computable.

\end{proposition}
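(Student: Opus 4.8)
The plan is to show that if either $ran(f)$ or $dom(f)$ is computable, then membership in $G_f$ can be decided by a computable procedure, using the fact that $f$ is a partial computable injection. Recall that $G_f = \{(x,y) : x \in dom(f) \wedge f(x) = y\}$ and that to have a computable structure we must be able to decide, given an arbitrary pair $(x,y) \in A \times A$, whether $(x,y) \in G_f$. The subtlety is that $dom(f)$ need not be computable in general (it is only $\Sigma^0_1$ as the domain of a partial computable function), so we cannot simply run $f(x)$ and wait, since $f(x)$ may diverge. The two hypotheses give us two different ways to bound the search and obtain a decision.

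For part (ii), suppose $dom(f)$ is computable. Then I would decide $(x,y) \in G_f$ as follows: first check whether $x \in dom(f)$, which is a computable test by assumption. If $x \notin dom(f)$, output that $(x,y) \notin G_f$. If $x \in dom(f)$, then $f(x)$ is guaranteed to converge, so I compute $f(x)$ (the computation halts) and compare the result with $y$, outputting $(x,y) \in G_f$ iff $f(x) = y$. This is clearly a total computable procedure, so $G_f$ is computable.

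For part (i), suppose instead that $ran(f)$ is computable. Here the idea is to use injectivity to reduce a question about the domain to a question about the range. To decide $(x,y) \in G_f$, I first test whether $y \in ran(f)$, which is computable by assumption. If $y \notin ran(f)$, then no element maps to $y$, so in particular $f(x) \neq y$ (whether or not $f(x)$ converges), and I output $(x,y) \notin G_f$. If $y \in ran(f)$, then there is a unique preimage of $y$ under the injection $f$; since $y \in ran(f)$ is confirmed, the search for an $x'$ with $f(x') = y$ is guaranteed to terminate, so I can compute $f^{-1}(y) = x'$ by dovetailing $f(0), f(1), f(2), \dots$ until a value equal to $y$ appears. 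By injectivity this $x'$ is unique, and I output $(x,y) \in G_f$ iff $x = x'$. Again this is a total computable procedure.

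The main obstacle, and the point that makes both halves work, is handling the \emph{divergence} of $f(x)$ when $x \notin dom(f)$: a naive algorithm that just simulates $f(x)$ and checks the output against $y$ fails because the simulation may run forever. Each hypothesis supplies a computable "certificate" that licenses a \emph{bounded} or \emph{guaranteed-to-halt} search — computability of $dom(f)$ lets us first confirm convergence before computing $f(x)$, while computability of $ran(f)$ lets us confirm that a converging preimage of $y$ exists before searching for it, after which injectivity pins down that preimage uniquely. I expect the verification that each procedure is total and correct to be entirely routine once these two search strategies are set up.
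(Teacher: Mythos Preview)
Your proof is correct and follows essentially the same approach as the paper: for (i) you test $y\in ran(f)$ and then dovetail to find the unique preimage, and for (ii) you test $x\in dom(f)$ before computing $f(x)$, exactly as the paper does. Your additional commentary on why naive simulation fails and how each hypothesis guarantees termination is a nice elaboration but does not depart from the paper's argument.
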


\end{definition}

\begin{proof}
(i)               Without loss of generality, we may assume that $A$$ =\omega $. Given a pair $(x ,y) ,$ first determine whether $y \in ran(f) .$ If $y \notin ran(f)$, then $(x ,y) \notin G_{f} .$ If $y \in ran(f) ,$ then run a Turing machine program $P_{f}$ for computing $f$ simultaneously on $0 ,1 ,2 ,\ldots $ by adding more and more inputs and computation steps (although finitely many at every stage) until we find $z$ such that the program $P_{f}$ halts on $z$ and outputs $y$.  If $x =z$ then $(x ,y) \in G_{f} ,$ and if $x \neq z$ then $(x ,y) \notin G_{f}$. \medskip

(ii) Given a pair $(x ,y) ,$ first determine whether $x \in dom(f) ,$ and if that is the case compute $f(x) .$    
\end{proof}

\bigskip

The domain of a partial computable injection function with computable range does not have to be computable. For example, for the halting set $K ,$ consider a computable $1 -1$ enumeration $g :$ $\omega  \rightarrow K$.
Let $f :\omega  \rightarrow \omega $ be defined as:

$f(x) =\left \{\begin{array}{cc}g^{ -1}(x) & \text{if  }x \in K \\
\uparrow  & \text{otherwise}\end{array}\right .$

\medskip

\noindent Then $f$  is a partial computable injection with $ran(f) =\omega $ and $dom(f) =K$. \medskip

Let $(A ,f)$ be a partial injection structure. The orbit of $a$ is defined to be: \bigskip

$\mathcal{O}_{f}(a) =\{b :$ $ \exists n \in \omega (f^{n}(a)\downarrow  =b \vee f^{n}(b)\downarrow  =a)\}$. \bigskip

\noindent There are five kinds of orbits. Finite orbits may be $k$-cycles or $k$-chains for $k \geq 1.$  A $k$-chain is of the form $\{x_{i} :1 \leq i \leq k\}$ where $x_{i} \neq x_{j}$ for $1 \leq i <j \leq k$ and $x_{1} \in A -ran(f)$, $x_{i +1} =f(x_{i})$, and $x_{k} \notin dom(f$$)$. Infinite orbits may be $Z$-chains, $\omega $-chains, or $\omega ^{ \ast }$-chains.  An $\omega ^{ \ast }$-chain is of the form $\{x_{i} :i \in \omega \}$ where  $x_{0} \in A -dom(f)$, $x_{i} =f(x_{i +1})$, and $x_{i} \neq x_{j}$ for $i \neq j .$

\begin{definition}
Let $\mathcal{A} =(A ,f)$ be a partial injection structure. In the following definitions we will assume that $k ,n \in \omega  -\{0\}.$ 
\end{definition}

(a) The \emph{cycle character} of $\mathcal{A}$ is

$\chi _{cycle}(\mathcal{A}) =\{ \langle k ,n \rangle  :$ $\mathcal{A}$ has $ \geqslant n$ many $k$ -cycles$\} .$ \medskip

(b) The \emph{finite chain character} of $\mathcal{A}$ is

$\chi _{path}(\mathcal{A}) =\{ \langle k ,n \rangle  :$ $\mathcal{A}$ has $ \geqslant n$ many $k$ -chains$\}$. \medskip

We say that a character in the previous definition is bounded if there is an upper bound on the size $k$. Two countable partial injection structures are isomorphic if and only if they have the same cycle character, the same finite chain character nd the same number of $Z$-chains, $\omega $-chains and $\omega ^{ \ast }$-chains. \medskip  

Let $\mathcal{A} =(A ,f)$ be a computable partial injection structure. Let $C$ be a cohesive set, and $\mathcal{B} =\Pi _{C}\mathcal{A}$. Then $f^{\mathcal{B}}([\psi ])\downarrow $ if $C \subseteq ^{ \ast }\{i \in \omega  :f(\psi (i))\downarrow \}$ and  $f^{\mathcal{B}}([\psi ])\uparrow $ otherwise. Similarly, $f^{\mathcal{B}}([\psi ]) =[\phi ]$ if and only if $C \subseteq ^{ \ast }\{i \in \omega  :f(\psi (i))\downarrow  =\phi (i)\downarrow \}$. We often omit the superscript in $f^{\mathcal{B}}$. \medskip

The following proposition is based on Theorem \ref{rumen}.

\begin{theorem}
\label{partialchar}Let $\mathcal{A} =(A ,f)$ be a computable partial injection structure. Let $C$ be a cohesive set. Then the cohesive power $\Pi _{C}\mathcal{A}$ is a partial injection structure that has the same cycle character and finite chain character as $\mathcal{A}$.
\end{theorem}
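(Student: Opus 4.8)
The plan is to express each relevant condition about the cohesive power as a first-order sentence of the appropriate arithmetical complexity and then invoke Theorem \ref{rumen} to transfer satisfaction between $\mathcal{A}$ and $\mathcal{B} = \Pi_C \mathcal{A}$. Recall that in the relational framework we work with the graph $G_f$, so that $f(x)\downarrow = y$ is rendered as $G_f(x,y)$, and a statement like ``$f^k(x)\downarrow = z$'' becomes an existential formula $\exists u_1 \cdots \exists u_{k-1}[G_f(x,u_1) \wedge G_f(u_1,u_2) \wedge \cdots \wedge G_f(u_{k-1},z)]$, while ``$f^k(x)\uparrow$'' is the corresponding universal negation. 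The key preliminary step is therefore to write down, in this relational language, the sentences $\theta^{cyc}_{n,k}$ and $\theta^{ch}_{n,k}$ asserting respectively that the structure has at least $n$ many $k$-cycles and at least $n$ many $k$-chains.

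First I would handle the cycle character. The property of being a $k$-cycle is that $x$ satisfies $f^k(x)\downarrow = x$ together with $f^l(x) \neq x$ for all $1 \leq l < k$; in the relational language these are all existential (asserting the relevant finite iterate paths exist and land back at $x$ or at distinct points), so $\langle k,n\rangle \in \chi_{cycle}(\mathcal{A})$ is expressible by a $\Sigma^0_1$ sentence exactly as in the proof of Theorem \ref{2-1Char}(i). Since $\Sigma^0_1 \subseteq \Sigma^0_2$, part (1) of Theorem \ref{rumen} applies and gives $\mathcal{A} \models \theta^{cyc}_{n,k}$ iff $\mathcal{B} \models \theta^{cyc}_{n,k}$, for every $n,k \geq 1$. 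Hence $\chi_{cycle}(\mathcal{B}) = \chi_{cycle}(\mathcal{A})$.

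Next I would handle the finite chain character, which is the genuinely new point here because chains involve both the ``head'' condition $x_1 \in A - ran(f)$ and the ``tail'' condition $x_k \notin dom(f)$. A $k$-chain witnessed by its initial element $x$ requires: that the iterates $x, f(x), \ldots, f^{k-1}(x)$ all exist and are pairwise distinct; that $f^k(x)\uparrow$ (equivalently $\forall y\, \lnot G_f(f^{k-1}(x), y)$, spelled out relationally); and that $x \notin ran(f)$, i.e.\ $\forall y\, \lnot G_f(y,x)$. The existence and distinctness clauses are $\Sigma^0_1$, but the two endpoint conditions each carry a universal quantifier over the whole domain, so the sentence $\theta^{ch}_{n,k}$ asserting at least $n$ pairwise-distinct such witnesses is $\Sigma^0_2$ (an existential block over the $n$ witnesses followed by the universal endpoint clauses). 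This places it within the reach of Theorem \ref{rumen}(1), yielding $\chi_{path}(\mathcal{B}) = \chi_{path}(\mathcal{A})$ by the same transfer argument.

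The main obstacle is purely the bookkeeping of rendering partiality faithfully in the relational language: ``$f^k(x)\downarrow$'' and ``$f^k(x)\uparrow$'' must be unwound into quantified formulas over $G_f$ with intermediate witnesses, and one must take care that the undefinedness clauses $f^k(x)\uparrow$ and $x \notin ran(f)$ really are $\Pi^0_1$ (universal) so that the whole sentence stays $\Sigma^0_2$ rather than climbing higher. Once the complexity count is confirmed to be at most $\Sigma^0_2$ for both families of sentences, no further work is needed, since the conclusion that $\Pi_C\mathcal{A}$ is a partial injection structure already follows from the observation preceding the theorem that the partial-injection axioms are universal, and the equality of both characters is exactly the transfer of these $\Sigma^0_2$ sentences via Theorem \ref{rumen}(1).
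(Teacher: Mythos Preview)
Your proposal is correct and follows essentially the same approach as the paper: write the partial-injection axioms as $\Pi^0_1$ sentences, express $\langle k,n\rangle \in \chi_{cycle}$ by a $\Sigma^0_1$ sentence and $\langle k,n\rangle \in \chi_{path}$ by a $\Sigma^0_2$ sentence, then invoke Theorem~\ref{rumen}. Your treatment is in fact more explicit than the paper's about the relational unwinding of $f^k(x)\downarrow$ and the endpoint conditions, but the underlying argument is the same (one minor correction: the paper states the $\Pi^0_1$ axioms inside the proof rather than before the theorem, but this changes nothing).
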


\begin{proof}
Being a partial injection structure can be described by the following $\Pi _{1}^{0}$ sentences: \medskip

$ \forall x \forall y \forall z[(f(x) =z \wedge f(y) =z) \Rightarrow x =y]$ \bigskip

$ \forall x \forall y \forall z[(f(x) =y \wedge f(x) =z) \Rightarrow y =z]$ \bigskip

\noindent Furthermore, \bigskip

$ \langle k ,n \rangle  \in \chi _{cycle}(\mathcal{A})$ iff \medskip

$\noindent  \exists x_{1} \cdots  \exists x_{n}[$ $\bigwedge _{1 \leq i \leq n}(f^{k}($$x_{i}) =x_{i} \wedge \bigwedge _{1 \leq l <k}f^{l}(x_{i}) \neq x_{i}) \wedge \bigwedge _{1 \leq i <j \leq n \&1 \leq l <k}f^{l}(x_{i}) \neq x_{j}]$, \bigskip

\noindent which is a $\Sigma _{1}^{0}$ sentence. \bigskip

\noindent Also,

$ \langle k ,n \rangle  \in \chi _{path}(\mathcal{A})$ iff \medskip

$\noindent  \exists x_{1} \cdots  \exists x_{n} \exists y_{1} \cdots  \exists y_{k} \forall y[$ $\bigwedge _{1 \leq i <j \leq n}x_{i} \neq x_{j} \wedge \bigwedge _{1 \leq i \leq n}(f^{}(y) \neq x_{i} \wedge f^{k -1}(x_{i}) =y_{i} \wedge f(y_{i}) \neq y)]$, \bigskip

\noindent which is a $\Sigma _{2}^{0}$ sentence. 
\end{proof}

\bigskip 

In some cases, the cohesive power is isomorphic to the original structure.

\begin{theorem}
Let $\mathcal{A}$ be a computable partial injection structure with bounded cycle character, bounded finite chain character, and no infinite orbits.  Let $C$ be a cohesive set. Then $\mathcal{A} \cong \Pi _{C}\mathcal{A}$.
\end{theorem}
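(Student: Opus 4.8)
The plan is to reduce everything to the isomorphism criterion for partial injection structures, which declares two such structures isomorphic exactly when they agree on cycle character, finite chain character, and the numbers of $Z$-chains, $\omega$-chains, and $\omega^{\ast}$-chains. Writing $\mathcal{B} = \Pi_C\mathcal{A}$, Theorem \ref{partialchar} already hands us that $\mathcal{B}$ is a partial injection structure with the same cycle character and finite chain character as $\mathcal{A}$. Since $\mathcal{A}$ has no infinite orbits, it has zero $Z$-chains, $\omega$-chains, and $\omega^{\ast}$-chains, so the only thing left to verify is that $\mathcal{B}$ likewise has no infinite orbits; once this is in hand, all five invariants agree and $\mathcal{A} \cong \mathcal{B}$ follows.

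First I would exploit the two boundedness hypotheses. Let $M$ be a common upper bound on the sizes of the $k$-cycles and $k$-chains of $\mathcal{A}$; because $\mathcal{A}$ has no infinite orbits, every orbit of $\mathcal{A}$ is finite with at most $M$ elements. The key observation is that, under this bound, the existence of an infinite orbit is captured by a single existential sentence. Let $\sigma$ be the $\Sigma_1^0$ sentence asserting that there are $M+1$ pairwise distinct elements forming a directed $f$-path, that is,
\[
\sigma:\quad \exists y_1 \cdots \exists y_{M+1}\Bigl[\,{\textstyle\bigwedge_{1 \leq i \leq M}} f(y_i) = y_{i+1} \;\wedge\; {\textstyle\bigwedge_{1 \leq i < j \leq M+1}} y_i \neq y_j\,\Bigr],
\]
where $f(y_i) = y_{i+1}$ abbreviates $G_f(y_i, y_{i+1})$. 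A path through $M+1$ distinct elements lies in a single orbit of size $> M$, which cannot be finite; hence $\mathcal{A} \models \lnot\sigma$. As $\lnot\sigma$ is $\Pi_1^0$, and therefore $\Pi_2^0$, Theorem \ref{rumen}(1) transfers it to the cohesive power, giving $\mathcal{B} \models \lnot\sigma$.

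It remains to argue that $\mathcal{B} \models \lnot\sigma$ forces $\mathcal{B}$ to have no infinite orbits, which is the converse direction and the one small point that needs care. I would verify, by inspecting the three shapes of infinite orbit, that every infinite orbit of a partial injection structure contains a directed $f$-path through arbitrarily many distinct elements: a $Z$-chain and an $\omega$-chain contain arbitrarily long initial segments of their defining sequences, while an $\omega^{\ast}$-chain with $f(x_{i+1}) = x_i$ and $x_0 \notin \mathrm{dom}(f)$ yields the directed path $x_{M+1} \to x_M \to \cdots \to x_1$ on $M+1$ distinct elements, all of which lie in $\mathrm{dom}(f)$. Thus an infinite orbit in $\mathcal{B}$ would witness $\mathcal{B} \models \sigma$, contradicting the previous paragraph, so $\mathcal{B}$ has no infinite orbits; combining this with the equality of cycle and finite chain characters from Theorem \ref{partialchar}, the isomorphism criterion yields $\mathcal{A} \cong \Pi_C\mathcal{A}$.

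The main obstacle is the point just isolated: phrasing ``no infinite orbit'' as a single first-order sentence is only possible because the boundedness hypotheses furnish a uniform cutoff $M$ separating finite orbits (size $\leq M$) from infinite ones (which contain directed paths longer than any bound). Without such a bound the distinction between a very long finite orbit and an infinite orbit could not be made by a sentence of bounded quantifier depth, so the transfer via Theorem \ref{rumen} would be unavailable. The $\omega^{\ast}$-chain case is the place to be most careful, since there one must traverse the chain in the direction in which $f$ is defined, toward the endpoint $x_0$, in order to exhibit the required path.
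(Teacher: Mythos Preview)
Your proof is correct and follows essentially the same approach as the paper: both use the bound $M$ on finite orbit sizes to write a single low-complexity sentence (the paper uses $\lnot\exists x\exists y[f^{M+1}(x)=y\wedge\bigwedge_{1\le i\le M}f^i(x)\neq x]$, you use the equivalent ``no $f$-path through $M+1$ distinct points''), transfer it via Theorem~\ref{rumen}, and combine with Theorem~\ref{partialchar} and the isomorphism criterion. Your treatment is in fact slightly more explicit than the paper's, particularly in checking the $\omega^\ast$-chain case.
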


\begin{proof}
Let $\mathcal{A} =(A ,f)$ and $\mathcal{B} =\Pi _{C}\mathcal{A}$. Let $M$ be the maximum size of finite orbits. Since $\mathcal{A}$ has no infinite orbits, it satisfies the following $\Pi _{2}^{0}$ sentence: \bigskip

$ \lnot  \exists x \exists y(f^{M +1}(x) =y \wedge \bigwedge _{1 \leq i \leq M}$ $(f^{i}(x) \neq x)) .$ \bigskip

\noindent Hence $\mathcal{B}$ satisfies the same sentence, so it has no infinite orbits. Thus, together with Theorem \ref{partialchar}, we have $\mathcal{A} \cong \mathcal{B}$.
\end{proof}

\bigskip

If the conditions of the previous theorem are not satisfied, the cohesive power will have infinitely many $Z$-chains.

\begin{theorem}
Let $\mathcal{A}$ be a computable partial injection structure with unbounded cycle character, or with unbounded finite chain character, or with an infinite orbit.  Let $C$ be a cohesive set. Then $\Pi _{C}\mathcal{A}$ has infinitely many $Z$-chains. 

Hence if $\mathcal{A}$ has only finitely many $Z$-chains, then  $\mathcal{A} \ncong \Pi _{C}\mathcal{A} .$
\end{theorem}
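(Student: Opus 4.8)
The plan is to build, as in the preceding theorems, a doubly-indexed family of partial computable functions $\psi_{m,z}$ (for $m \in \omega$ and $z \in \mathbb{Z}$) whose classes in $\mathcal{B} = \Pi_C\mathcal{A}$ satisfy $f([\psi_{m,z}]) = [\psi_{m,z+1}]$, are pairwise distinct in $z$ for each fixed $m$, and lie in different orbits of $\mathcal{B}$ whenever $m_1 \neq m_2$. The first two properties make $\{[\psi_{m,z}] : z \in \mathbb{Z}\}$ an orbit closed under $f$ and $f^{-1}$ with no repetitions, hence a $Z$-chain, and the third produces infinitely many of them, which is exactly the conclusion; the final ``hence'' sentence is then immediate, since a structure with only finitely many $Z$-chains cannot be isomorphic to $\mathcal{B}$. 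The point of the write-up is that all three hypotheses can be funneled into a single construction.

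Concretely, I would call $a$ \emph{$k$-good} if $f^z(a)\downarrow$ for every $|z| \le k$ and the values $f^z(a)$, $|z| \le k$, are pairwise distinct, and write $N_k(a) = \{f^z(a) : |z| \le k\}$ for its radius-$k$ neighborhood. Being $k$-good is a $\Sigma_1^0$ property, since each $f^z(a)\downarrow$ is semidecidable (follow $f$ forward and $f^{-1}$ backward using the computable graph $G_f$) and distinctness is decidable once the finitely many values are in hand. In each of the three cases there are infinitely many $k$-good elements: a long cycle or a central element of a long finite chain is $k$-good when the orbit has size $> 2k$, and a sufficiently deep element of an infinite orbit is $k$-good. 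I would then construct a computable sequence $a(0), a(1), a(2), \ldots$ by dovetailed search: having chosen $a(0), \ldots, a(k-1)$, search for a $k$-good $a$ with $N_k(a)$ disjoint from the finite set $N_0(a(0)) \cup \cdots \cup N_{k-1}(a(k-1))$, and set $a(k) = a$. This halts, because only finitely many $k$-good $a$ can have $N_k(a)$ meeting a fixed finite set, so infinitely many remain. Finally I set $\psi_{m,z}(i) = f^z(a(\langle m,i\rangle))$, which is partial computable and, since $\langle m,i\rangle \ge i$, cofinitely defined, so $[\psi_{m,z}] \in B$.

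The verification then runs as follows. For fixed $m$ and $z_1 \neq z_2$, once $\langle m,i\rangle \ge \max(|z_1|,|z_2|)$ the $k$-goodness of $a(\langle m,i\rangle)$ forces $\psi_{m,z_1}(i) \neq \psi_{m,z_2}(i)$, giving $[\psi_{m,z_1}] \neq [\psi_{m,z_2}]$, while $f([\psi_{m,z}]) = [\psi_{m,z+1}]$ holds coordinatewise; thus each $\{[\psi_{m,z}] : z \in \mathbb{Z}\}$ is a $Z$-chain. The separation of distinct $m$ is the core of the argument: the disjointness of the neighborhoods forces any $j$ with $f^j(a(\langle m_1,i\rangle)) = a(\langle m_2,i\rangle)$ to have $|j|$ exceeding $\max(\langle m_1,i\rangle,\langle m_2,i\rangle) \ge i$ (for two elements on a common cycle, both arc-lengths exceed this bound). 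Hence for each fixed $j \in \mathbb{Z}$ the set of $i$ with $f^j(a(\langle m_1,i\rangle)) = a(\langle m_2,i\rangle)$ is finite, so no power of $f$ identifies $[\psi_{m_1,0}]$ with $[\psi_{m_2,0}]$ in $\mathcal{B}$, and they lie in different $Z$-chains.

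The main obstacle I anticipate is that, unlike the total injection case, neither $dom(f)$ nor $ran(f)$ need be computable, so finite chains and orbit types are not computably detectable (their endpoints are only $\Pi_1^0$). The construction sidesteps this by never recognizing a whole chain: it uses only the $\Sigma_1^0$ notion of a $k$-good element, and replaces the unavailable ``distinct orbits'' control of the earlier proofs by the disjoint-neighborhood condition, which converts into the growing-connecting-distance estimate above. The care must therefore be concentrated in checking that this estimate really holds in all three cases, and in particular that deep elements of an $\omega$-chain or an $\omega^{\ast}$-chain are $k$-good, which requires traveling the orbit through $f^{-1}$ as well as through $f$.
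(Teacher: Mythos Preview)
Your proposal is correct and follows essentially the same approach as the paper, which simply says the proof is ``similar to that of part (ii) of Theorem~\ref{2-1Char}'': pick a computable sequence $a(k)$ whose radius-$k$ neighborhoods $\{f^{z}(a(k)):|z|\le k\}$ are defined, injective, and pairwise disjoint, then set $\psi_{m,z}(i)=f^{z}(a(\langle m,i\rangle))$. Your write-up is in fact more careful than the paper's one-line reference, since you make explicit the adaptation to the partial setting---replacing the total $f^{-1}$ available in the $2{:}1$ case by the $\Sigma_1^0$ notion of a $k$-good point and a dovetailed search---and you verify the disjoint-neighborhood bound $|j|>\max(\langle m_1,i\rangle,\langle m_2,i\rangle)$ that separates the chains.
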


\begin{proof}
The proof is similar to that of part (ii) of Theorem \ref{2-1Char}.
\end{proof}

\bigskip

The cohesive power always has at least as many orbits of a certain fixed type as $\mathcal{A}$. In some cases the number of $\omega $-chains and $\omega ^{ \ast }$-chans is the same.

\begin{theorem}
Let $\mathcal{A}$ be a computable partial injection structure with bounded finite chain character. Let $C$ be a cohesive set. Then $\Pi _{C}\mathcal{A}$ has the same number of $\omega $-chains and the same number of $\omega ^{ \ast }$-chains as $\mathcal{A}$.
\end{theorem}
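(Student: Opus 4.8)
The plan is to use the canonical embedding $d:a\mapsto[c_a]$ and show it induces a bijection between the $\omega$-chains of $\mathcal{A}$ and those of $\mathcal{B}=\Pi_{C}\mathcal{A}$, and dually between $\omega^{\ast}$-chains; I would treat $\omega$-chains in full and note that $\omega^{\ast}$-chains follow by the symmetric argument with $f$ replaced by $f^{-1}$ and $dom$ by $ran$. Fix the bound $M$ from bounded finite chain character, so every finite chain of $\mathcal{A}$ has at most $M$ elements, and recall that for a computable partial injection the sets $ran(f^{n})$ and $dom(f)$ are c.e.\ and each $f^{-n}$ is partial computable (search for preimages via $G_{f}$). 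First I would record the translation facts: using the description of $f^{\mathcal{B}}$ preceding Theorem \ref{partialchar} together with cohesiveness applied to the c.e.\ set $ran(f^{n})$, one gets $[\psi]\in ran(f^{\mathcal{B},n})$ iff $C\subseteq^{\ast}\{i:\psi(i)\in ran(f^{n})\}$ (forward: use $\phi=f^{-n}\circ\psi$; backward: read off $\psi(i)=f^{n}(\phi(i))$), and dually $f^{\mathcal{B}}([\psi])\uparrow$ iff $C\subseteq^{\ast}\{i:\psi(i)\notin dom(f)\}$, while $f^{\mathcal{B},n}([\psi])\downarrow$ iff $C\subseteq^{\ast}\{i:f^{n}(\psi(i))\downarrow\}$.

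Next I would check that $d$ carries $\omega$-chains to $\omega$-chains injectively. If $x_{0}\notin ran(f)$ starts an $\omega$-chain, then $[c_{x_0}]\notin ran(f^{\mathcal{B}})$ (the witnessing set is empty) and $f^{\mathcal{B},n}([c_{x_0}])=[c_{f^{n}(x_0)}]$ is defined and distinct for every $n$, so $[c_{x_0}]$ starts an $\omega$-chain of $\mathcal{B}$; distinct starts give distinct orbits because $[c_{f^{n}(x_0)}]=[c_{f^{m}(x_0')}]$ would force the constants $f^{n}(x_0)=f^{m}(x_0')$. Hence $\mathcal{B}$ has at least as many $\omega$-chains as $\mathcal{A}$.

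For the reverse direction I take $[\psi]$ starting an $\omega$-chain of $\mathcal{B}$, that is $[\psi]\notin ran(f^{\mathcal{B}})$ with $\mathcal{O}([\psi])$ infinite. The translation facts give $C\subseteq^{\ast}\{i:\psi(i)\notin ran(f)\}$ and, taking $n=M$, $C\subseteq^{\ast}\{i:f^{M}(\psi(i))\downarrow\}$, so for almost every $i\in C$ the point $\psi(i)\notin ran(f)$ begins a chain with at least $M+1$ elements; by the bound $M$ this chain is infinite, hence $\psi(i)$ starts an $\omega$-chain of $\mathcal{A}$. If $\mathcal{A}$ has infinitely many $\omega$-chains, then $d$ already yields infinitely many in the countable structure $\mathcal{B}$, so both counts equal $\aleph_{0}$. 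If $\mathcal{A}$ has finitely many, with starts $b_{1},\ldots,b_{N}$, then $C\subseteq^{\ast}\bigcup_{j\leq N}\{i:\psi(i)=b_{j}\}$, a finite union of c.e.\ sets, so by the cohesiveness principle recalled in the introduction $C\subseteq^{\ast}\{i:\psi(i)=b_{j_0}\}$ for some $j_0$, whence $[\psi]=[c_{b_{j_0}}]$ lies in the image of $d$ and the counts agree. The $\omega^{\ast}$-chain statement follows by the dual argument, where an endpoint $\psi(i)\notin dom(f)$ admitting a length-$M$ preimage chain, i.e.\ $\psi(i)\in ran(f^{M})$, lies in a chain of more than $M$ elements and hence in an $\omega^{\ast}$-chain.

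I expect the crux to be the translation facts underlying the reverse direction: establishing that $[\psi]\notin ran(f^{\mathcal{B}})$ is \emph{equivalent} to, not merely implied by, $\psi$ avoiding $ran(f)$ along $C$. This is exactly where partiality of $f$ matters---unlike the total $(2,0)$:$1$ case of Theorem \ref{2-1Char}, one cannot decide whether $f^{M}$ is undefined, so instead I route everything through the c.e.\ sets $ran(f^{n})$ and $dom(f)$, invoke cohesiveness to get the $\subseteq^{\ast}$ dichotomy, and supply the explicit partial-computable witness $f^{-n}\circ\psi$. Once these are in hand, bounded finite chain character does precisely the work of separating finite chains from infinite ones, and the finite-union cohesiveness principle completes the pin-down.
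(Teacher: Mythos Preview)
Your argument is correct, but it takes a genuinely different route from the paper's. The paper's proof is purely syntactic: using the bound $M$, it observes that ``there are at least $n$ $\omega$-chains'' is expressible by the $\Sigma_{2}^{0}$ sentence
\[
\exists x_{1}\cdots\exists x_{n}\,\exists y_{1}\cdots\exists y_{n}\,\forall z\Bigl[\bigwedge_{i<j}x_{i}\neq x_{j}\ \wedge\ \bigwedge_{i}\bigl(f^{M}(x_{i})=y_{i}\ \wedge\ f(z)\neq x_{i}\bigr)\Bigr],
\]
since an element out of $ran(f)$ with $f^{M}$ defined must start an $\omega$-chain; Theorem~\ref{rumen} then transfers these sentences between $\mathcal{A}$ and $\Pi_{C}\mathcal{A}$ (and dually for $\omega^{\ast}$-chains). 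Your proof is semantic: you set up the translation lemmas for $ran(f^{\mathcal{B},n})$ and $dom(f^{\mathcal{B}})$ via cohesiveness on the c.e.\ sets $ran(f^{n})$, $dom(f)$, and then pin down each $\omega$-chain start of $\mathcal{B}$ to a constant $[c_{b_{j}}]$ using the finite-union cohesiveness principle. The paper's approach is shorter and fits its running methodology of reducing everything to the complexity of defining sentences; your approach is self-contained (it does not invoke Theorem~\ref{rumen}) and yields the extra structural information that, when $\mathcal{A}$ has finitely many $\omega$-chains, every $\omega$-chain of $\Pi_{C}\mathcal{A}$ is literally the image under the canonical embedding of one in $\mathcal{A}$.
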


\begin{proof}
Let $\mathcal{A} =(A ,f)$ and $\mathcal{B} =\Pi _{C}\mathcal{A}$. Let $M$ be the maximum size of finite chains. The property that $\mathcal{A}$ has $ \geq n$ many $\omega $-chains can be expressed by the following $\Sigma _{2}^{0}$ sentence: \bigskip

$ \exists x_{1} \cdots  \exists x_{n} \exists y_{1} \cdots  \exists y_{n} \forall z[\bigwedge _{1 \leq i <j \leq n}x_{i} \neq x_{j} \wedge \bigwedge _{1 \leq i \leq n}(f^{M}(x_{i}) =y_{i} \wedge f(z) \neq x_{i})]$ \bigskip 

\noindent Hence $\mathcal{A}$ and $\mathcal{B}$ have the same number of $\omega $-chains.

Similarly, the property that $\mathcal{A}$ has $ \geq n$ many $\omega ^{ \ast }$-chains can be expressed by the following $\Sigma _{2}^{0}$ sentence: \bigskip

$ \exists x_{1} \cdots  \exists x_{n} \exists y_{1} \cdots  \exists y_{n} \forall z[\bigwedge _{1 \leq i <j \leq n}x_{i} \neq x_{j} \wedge \bigwedge _{1 \leq i \leq n}(f^{M}(y_{i}) =x_{i} \wedge f(x_{i}) \neq z_{})]$.
\end{proof}

\bigskip

\begin{theorem}
 Let $\mathcal{A} =(A ,f)$ be a computable partial  injection structure with unbounded finite chain character. Let $C$ be a cohesive set. \medskip

(i) Assume that $ran(f)$ is computable. Then $\Pi _{C}\mathcal{A}$ has infinitely many $\omega $-chains. \medskip

Hence if $\mathcal{A}$ has only finitely many $\omega $-chains, then  $\mathcal{A} \ncong \Pi _{C}\mathcal{A}$. \bigskip 

(ii) Assume that $dom(f)$ is computable. Then $\Pi _{C}\mathcal{A}$ has infinitely many $\omega ^{ \ast }$-chains. \medskip

Hence if $\mathcal{A}$ has only finitely many $\omega ^{ \ast }$-chains, then  $\mathcal{A} \ncong \Pi _{C}\mathcal{A}$.
\end{theorem}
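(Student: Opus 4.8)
The plan is to prove each part by constructing witnesses directly inside the cohesive power $\mathcal{B} = \Pi_C \mathcal{A}$, mirroring the argument for Theorem \ref{omegachains}: part (i) is essentially that construction carried out in the partial setting, and part (ii) is its dual obtained by running $f$ backwards. In both cases the ``Hence'' clauses are immediate, since isomorphic partial injection structures agree on the number of $\omega$-chains and of $\omega^{\ast}$-chains; so producing infinitely many of the relevant type in $\mathcal{B}$ when $\mathcal{A}$ has only finitely many forces $\mathcal{A} \ncong \mathcal{B}$.

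For part (i) I would first use computability of $ran(f)$ to note that $A - ran(f)$ is computable, so the starting points of chains (elements with no $f$-preimage) are uniformly identifiable. Since the finite chain character is unbounded, for every $n$ there is a $k$-chain with $k > n$, whose starting point $a \in A - ran(f)$ satisfies $f^{n}(a)\downarrow$. A greedy search then produces a computable $g : \omega \to A$ with $g(\omega) \subseteq A - ran(f)$ such that $f^{n}(g(n))\downarrow$ and the initial segments $\{f^{k}(g(n)) : 0 \leq k \leq n\}$ are pairwise disjoint and injective --- the same spacing condition used in Theorem \ref{omegachains} --- because unboundedness supplies chains of arbitrarily large length lying in distinct orbits, so a fresh long-enough chain can be chosen at each stage. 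I then set
\[
\psi_{m,n}(x) = f^{n}(g(\langle m, x\rangle)) \text{ if } 0 \leq n \leq \langle m, x\rangle, \text{ and } \psi_{m,n}(x)\uparrow \text{ otherwise},
\]
so each $\psi_{m,n}$ has cofinite domain and $[\psi_{m,n}] \in B$. The verification then proceeds as in Theorem \ref{omegachains}: $f([\psi_{m,n}]) = [\psi_{m,n+1}]$, and $[\psi_{m,n_1}] \neq [\psi_{m,n_2}]$ for $n_1 \neq n_2$ (distinct positions on a long enough chain), so $\{[\psi_{m,n}] : n \in \omega\}$ is a subset of an $\omega$-chain; moreover $[\psi_{m,0}] \in B - f(B)$, because $\psi_{m,0}(i) = g(\langle m,i\rangle) \notin ran(f)$ yields $C \subseteq^{\ast} \{i : \psi_{m,0}(i) \notin ran(f)\}$ and hence no preimage; finally the spacing condition gives $f^{k_1}([\psi_{m_1,0}]) \neq f^{k_2}([\psi_{m_2,0}])$ whenever $m_1 \neq m_2$, placing the families in distinct $\omega$-chains.

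Part (ii) is the mirror image, replacing ``range'' by ``domain'' and $f$ by its partial inverse $f^{-1}$. Now computability of $dom(f)$ makes $A - dom(f)$ computable, so the \emph{ends} of chains (elements with no $f$-image) are identifiable, and unbounded finite chain character gives, for each $n$, an endpoint $a \in A - dom(f)$ with $f^{-n}(a)\downarrow$. I build a computable $g$ with $g(\omega) \subseteq A - dom(f)$ satisfying the analogous backward spacing condition, and set $\psi_{m,n}(x) = f^{-n}(g(\langle m,x\rangle))$ under the same bound $0 \leq n \leq \langle m,x\rangle$. Here $f([\psi_{m,n+1}]) = [\psi_{m,n}]$ and $f^{\mathcal{B}}([\psi_{m,0}])\uparrow$, since $\psi_{m,0}(i) = g(\langle m,i\rangle) \notin dom(f)$ gives $C \subseteq^{\ast} \{i : f(\psi_{m,0}(i))\uparrow\}$; thus $\{[\psi_{m,n}] : n \in \omega\}$ is a subset of an $\omega^{\ast}$-chain, and distinct $m$ give distinct $\omega^{\ast}$-chains by the spacing. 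Hence $\mathcal{B}$ has infinitely many $\omega^{\ast}$-chains.

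The step I expect to be the main obstacle is the construction of $g$ and the accompanying definedness bookkeeping in part (ii), where the backward iteration uses $f^{-1}$, a partial computable function whose domain $ran(f)$ we do \emph{not} assume computable. The key is that this never actually obstructs the argument: locating endpoints uses only the computable set $A - dom(f)$, and the searches computing $f^{-1}, f^{-2}, \ldots$ from a chosen endpoint halt precisely because we select endpoints of chains that unboundedness guarantees are long enough, so $f^{-n}(g(n))$ is genuinely defined. Keeping this asymmetry straight --- $ran(f)$ computable detects the ``no-preimage'' starts needed for $\omega$-chains, while $dom(f)$ computable detects the ``no-image'' ends needed for $\omega^{\ast}$-chains --- is what makes these the right hypotheses, and is the only place where the partial, rather than total, nature of $f$ requires genuine care.
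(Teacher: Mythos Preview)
Your proposal is correct and follows essentially the same approach as the paper: for (i) the paper simply says the proof is similar to Theorem~\ref{omegachains}, and for (ii) it constructs exactly the function $g:\omega\to A-dom(f)$ with the spacing condition and defines $\psi_{m,n}(x)=f^{-n}(g(\langle m,x\rangle))$ for $n\le\langle m,x\rangle$, just as you do. Your discussion of why the backward searches for $f^{-1},f^{-2},\ldots$ halt (because the chosen chains are long enough) and why only $dom(f)$, not $ran(f)$, needs to be computable in (ii) actually supplies detail that the paper leaves implicit.
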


\begin{proof}
  (i)  Proof is similar to that of Theorem \ref{omegachains}.\medskip

(ii) Let $g :\omega  \rightarrow (A -dom(f))$ be a computable function such that if natural numbers $m_{1} ,m_{2} ,n_{1} ,n_{2}$ are such that $m_{1} \neq m_{2}$ or $n_{1} \neq n_{2} ,$ then $f^{ -n_{1}}(g(m_{1})) \neq f^{ -n_{2}}(g(m_{2}))$ where $n_{1} \leq m_{1}$ and $\left .n_{2}\right . \leq m_{2}$.
Such a function exists since the chain character is unbounded and $dom(f)$ is computable.

We define partial function $\psi _{m ,n} :\omega  \rightarrow A$ for $m ,n \in \omega $ as follows:\bigskip

$\psi _{m ,n}(x) =\left \{\begin{array}{cc}f^{ -n}(g( \langle m ,x \rangle )) & \text{if }n \leq  \langle m ,x \rangle  ; \\
\uparrow  & \text{otherwise.}\end{array}\right .$

\bigskip

\noindent It follows that $[\psi _{m ,n}] \in \Pi _{C}A$ since $dom(\psi _{m ,n})$ is cofinite. We can show that $\{[\psi _{m ,n}] :n \in \omega \}$ form an $\omega ^{ \ast }$-chains, and that for $m_{1} \neq m_{2}$ we have that $[\psi _{m_{1} ,0}]$ and $[\psi _{m_{2} ,0}]$ belong to different $\omega ^{ \ast }$-chains. Hence $\Pi _{C}\mathcal{A}$ has infinitely many $\omega ^{ \ast }$-chains.  
\end{proof}

\section{Concluding remarks
}

In this paper, we focus on the isomorphism types of cohesive powers of certain computable structures with a single binary relation, such as graphs, equivalence structures, and partial injection structures. This adds to the previous study of the cohesive powers of the ordered set of natural numbers, $(\omega  , <)$, and other natural linear orders in \cite{LinOrd1,LinOrd}. Here, we also investigate cohesive powers of computable structures with a unary function that is one-to-one, two-to-one, and $(2 ,0) :1$, which can be identified with the directed graphs they induce. It will be worthwhile to investigate the isomorphism types of cohesive powers of other directed graphs induced by functions. Some structures in the classes we consider are isomorphic to all of their cohesive powers. It was previously known that this is also true for finite structures, ordered set of rationals, random graph, and the countable atomless Boolean algebra. Some structures in the classes we consider are not isomorphic to their cohesive powers, having properties that distinguish them and that can be described by computable (infinitary) sentences.

Our goal is to further develop the theory of cohesive powers and, more generally, cohesive products of effective structures by investigating their algebraic, computability-theoretic, and syntactic properties. We would like to include more complicated algebraic structures such as semigroups, groups, rings, and fields. Cohesive powers of computable fields will extend the earlier study of cohesive powers of the field of rationals, $(Q , + ,) ,$ in \cite{DHMM} and will have further applications in the study of the lattice of c.e.\ vector spaces and their automorphisms, thus generalizing results in \cite{DH1}.


\begin{thebibliography}{99}

\bibitem {CCH}W. Calvert, D. Cenzer, and V. Harizanov, Densely computable structures, \emph{Journal of Logic and Computatio}n 32 (2022), pp. 581--607. 

\bibitem {CCHM}W. Calvert, D. Cenzer, V. Harizanov and A. Morozov, Effective categoricity of equivalence structures, \emph{Annals of Pure and Applied Logic} 141 (2006), pp. 61--78. 

\bibitem {CHR}D. Cenzer, V. Harizanov, and J.B. Remmel,
Computability-theoretic properties of injection structures, \emph{Algebra and Logic} 53 (2014), pp. 39--69. 


\bibitem {CHR1} D. Cenzer, V. Harizanov, and J.B. Remmel, Two-to-one structures, \emph{Journal of Logic and Computation} 23 (2013), pp. 1195--1223.


\bibitem {CKL}B.F. Csima, B. Khoussainov, and J. Liu,  Computable categoricity of graphs with finite components, in: \emph{Logic and Theory of Algorithms}, CiE 2008, A. Beckmann, C. Dimitracopoulos, and B. Lowe, editors, Lecture Notes in Computer Science 5028, pp. 139--148, 2008. 


\bibitem {D1}
R.D. Dimitrov, Quasimaximality and principal filters isomorphism between $\mathcal{E}^{ \ast }$ and $\mathcal{L}^{ \ast } (V_{\infty })$, \emph{Archive for Mathematical Logic }43 (2004), pp. 415--424.

\bibitem {D3}
R.D. Dimitrov, Cohesive powers of computable structures, \emph{Annuare de l'Universit{\'e} de Sofia ``St. Kliment Ohridski''}, Facult{\'e} de Math{\'e}matiques et Informatique, vol. 99 (2009), pp. 193--201. 

\bibitem {D4} R.D. Dimitrov, Extensions
of certain partial automorphisms of $\mathcal{L}^{ \ast } (V_{\infty })$, \emph{Annuare de l'Universit{\'e} de Sofia ``St. Kliment Ohridski''}, Facult{\'e} de Math{\'e}matiques et Informatique, vol. 99 (2009),
pp. 183--191.

\bibitem {DH0}R. Dimitrov and V. Harizanov, Effective ultraproducts and applications, accepted for publication in the \emph{Aspects of Computation}, N. Greenberg, K.M. Ng, G. Wu and Y. Yang, editors, IMS, National University of Singapore, World Scientific.




\bibitem {DH2}R. Dimitrov and V. Harizanov, Countable nonstandard models: following Skolem's approach, accepted for publication in the \emph{Handbook of the History and Philosophy of Mathematical Practice}, Springer, B. Sriraman, ed.


\bibitem {DH1}
R. Dimitrov and V. Harizanov, Orbits of maximal vector spaces, \emph{Algebra and Logic} 54 (2016), pp. 440--476. 


\bibitem {DHMM} R. Dimitrov, V. Harizanov, R. Miller and K.J. Mourad, Isomorphisms of
non-standard fields and Ash's conjecture, in: \emph{10th Conference on Computability in Europe}, A. Beckmann, E. Csuhaj-Varj{\'u}, and
K. Meer, editors, Lecture Notes in Computer Science 8493 (2014), Springer, pp. 143--152.

 

\bibitem {LinOrd}R. Dimitrov, V. Harizanov, A. Morozov, P. Shafer, A. Soskova, and S. Vatev, On cohesive powers of linear orders, accepted for publication in the \emph{Journal of Symbolic Logic}.  https://arxiv.org/abs/2009.00340

\bibitem {LinOrd1}R. Dimitrov, V. Harizanov, A. Morozov, P. Shafer, A. Soskova, and S. Vatev, Cohesive powers of linear orders, in: \emph{Computing with Foresight and Industry}, F. Manea, B. Martin, D. Paulusma and G. Primiero, editors, Computability in Europe, Durham, UK, Springer (2019), pp. 168--180.


\bibitem {FST} S. Feferman, D.S. Scott, and S. Tennenbaum, Models of arithmetic
through function rings, \emph{Notices of the American Mathematical Society} 6, 173 (1959). Abstract \#556-31. 

\bibitem {FHM}
E. Fokina, V. Harizanov, A. Melnikov, Computable model theory, in: \emph{Turing's Legacy: Developments from Turing Ideas in Logic},
R. Downey, editor, Lecture Notes in Logic 42, Cambridge University Press (2014), pp. 124--194.

\bibitem {HS} V. Harizanov and K. Srinivasan, Effective ultrapowers of graphs and other structures, accepted for publication in \emph{Contemporary Mathematics}.

\bibitem {Hi}
J. Hirshfeld, Models of arithmetic and recursive functions, \emph{Israel Journal of Mathematics} 20 (1975), pp. 111--126. 

\bibitem {HW}
J. Hirshfeld and W.H. Wheeler, Forcing, arithmetic, division rings, \emph{Lecture Notes in Mathematics} 454, Springer, Berlin, 1975.

\bibitem {L}
M. Lerman, Recursive functions modulo co-$r$-maximal sets, \emph{Transactions of the American Mathematical Society} 148 (1970), pp. 429--444. 

\bibitem {LM}L.B. Marshall, \emph{Computability-Theoretic Properties of
Partial Injections, Trees, and Nested Equivalences, }PhD dissertation, George Washington University, 2015.


\bibitem {ML1}
T. McLaughlin, Some extension and rearrangement theorems for Nerode semirings, \emph{Zeitschrift f{\"u}r Mathematische Logik und Grundlagen der Mathematik}
35 (1989), pp. 197--209. 

\bibitem {ML2} T. McLaughlin, Sub-arithmetical ultrapowers: a survey,
\emph{Annals of Pure and Applied Logic} 49 (1990), pp. 143--191. 

\bibitem {ML3}
T. McLaughlin, $ \Delta _{1}$ ultrapowers are totally rigid, \emph{Archive for Mathematical Logic} 46 (2007), pp. 379--384.

\bibitem {Ne}
G.C. Nelson, Constructive ultraproducts and isomorphisms of recursively saturated ultrapowers, \emph{Notre Dame J. Formal Logic} 33 (1992),
pp. 433--441.

\bibitem {RS}
R.I. Soare, \emph{Recursively Enumerable Sets and Degrees. A Study of Computable Functions and Computably Generated Sets}, Springer-Verlag,
Berlin, 1987.          
\end{thebibliography}
\end{document}